\tikzstyle{vertex}=[circle,draw=black,fill=black,inner sep=0,minimum size=5pt,text=white,font=\footnotesize]
\theoremstyle{plain}
\newtheorem{theorem}{Theorem}
\newtheorem{corollary}[theorem]{Corollary}
\newtheorem{claim}[theorem]{Claim}
\newtheorem{lemma}[theorem]{Lemma}
\newtheorem{conjecture}[theorem]{Conjecture}
\newtheorem{problem}[theorem]{Problem}
\theoremstyle{definition}
\newtheorem{definition}{Definition}
\newcommand*{\calL}{{\mathcal{L}}}
\newcommand*{\sdelta}{{\bar{\delta}}}
\DeclareMathOperator*{\True}{True}
\DeclareMathOperator*{\False}{False}
\DeclareMathOperator*{\sign}{sign}
\DeclareMathOperator*{\tower}{tw}
\title{Ramsey numbers of semi-algebraic and semi-linear hypergraphs}
\author{Zhihan Jin \thanks{ETH Zurich, \emph{e-mail}: \textbf{zhijin@student.ethz.ch}}\footnotemark[1] \and Istv\'an Tomon \thanks{Ume\r{a} University, \emph{e-mail}: \textbf{istvan.tomon@umu.se}}\footnotemark[2]}
\date{}
\begin{document}
	\maketitle
	
	\sloppy
	
	\begin{abstract}
		An $r$-uniform hypergraph $H$ is \emph{semi-algebraic} of complexity $\mathbf{t}=(d,D,m)$ if the vertices of~$H$ correspond to points in $\mathbb{R}^{d}$ and the edges of $H$ are determined by the sign-pattern of $m$  degree-$D$ polynomials. Semi-algebraic hypergraphs of bounded complexity provide a general framework for studying geometrically defined hypergraphs. 
		
		The much-studied \emph{semi-algebraic Ramsey number} $R_{r}^{\mathbf{t}}(s,n)$ denotes the smallest $N$ such that every $r$-uniform semi-algebraic hypergraph of complexity $\mathbf{t}$ on $N$ vertices contains either a clique of size $s$ or an independent set of size $n$. Conlon, Fox, Pach, Sudakov, and Suk
  proved that $R_{r}^{\mathbf{t}}(n,n)<\mbox{tw}_{r-1}(n^{O(1)})$, where $\mbox{tw}_{k}(x)$ is a tower of 2's of height $k$ with an $x$ on the top.	This bound is also the best possible if $\min\{d,D,m\}$ is sufficiently large with respect to $r$. 
		They conjectured that in the asymmetric case, we have $R_{3}^{\mathbf{t}}(s,n)<n^{O(1)}$ for fixed $s$. We refute this conjecture by showing that $R_{3}^{\mathbf{t}}(4,n)>n^{(\log n)^{1/3-o(1)}}$ for some complexity $\mathbf{t}$.

		In addition, motivated by results of Bukh and Matou\v sek and Basit, Chernikov, Starchenko, Tao and Tran, we study the complexity of the  Ramsey problem when  the defining polynomials are linear, that is, when $D=1$. In particular, we prove that $R_{r}^{d,1,m}(n,n)\leq 2^{O(n^{4r^2m^2})}$, while from below, we establish $R^{1,1,1}_{r}(n,n)\geq 2^{\Omega(n^{\lfloor r/2\rfloor-1})}$.

	\end{abstract}
 
\noindent
\textbf{AMS Subject Classification.} 05D10

\noindent
 \textbf{Keywords.} Ramsey theory, hypergraphs, semi-algebraic
	
	\section{Introduction}
	Given positive integers $r,s,n$, the {\em Ramsey number} $R_{r}(s,n)$ denotes the smallest $N$ such that every $r$-uniform hypergraph on $N$ vertices contains either a clique of size $s$ or an independent set of size $n$. For convenience, we write $R_{r}(n)$ instead of $R_{r}(n,n)$. 
	In the case of graphs, that is $r=2$, classical results of Erd\H{o}s and Szekeres \cite{ESz35} and Erd\H{o}s \cite{E47} tell us that $R_{2}(n)=2^{\Theta(n)}$, and in case $s$ is fixed and $n$ is sufficiently large, we have $R_{2}(s,n)=n^{\Theta(s)}$. However, in case $r\geq 3$, the Ramsey numbers are less understood. Erd\H{o}s and Rado \cite{ER52} and Erd\H{o}s, Hajnal, and Rado \cite{EHR65} showed that 
	$$\mbox{tw}_{r-1}(\Omega(n^{2}))< R_{r}(n)<\mbox{tw}_{r}(O(n)).$$ 
	Also, in the asymmetric case, we have $R_{r}(s,n)=\tower_{r-1}(n^{\Theta_{r,s}(1)})$ for $s\geq r+2$ \cite{EHR65,MS18}, and $R_{3}(4,n)=2^{n^{\Theta(1)}}$ \cite{EH62}. Here, $\tower_{k}(x)$ is the \emph{tower function} defined as $\tower_1(x):=x$ and $\tower_{k}(x):=2^{\tower_{k-1}(x)}$. Hence, there is an almost exponential gap between the lower and upper bound for $R_{r}(n)$ in case $r\geq 3$, and it is a major open problem to close this gap. Note that, however, the rough order of the asymmetric Ramsey number $R_{r}(s,n)$ is more understood, at least up to the height of the required tower when $s \ge r + 2$. See \cite{CFS10,MS20} for recent developments.
	
    Yet the situation changes if we restrict our attention to hypergraphs that arise from geometric considerations. 
    To this end, an $r$-uniform hypergraph $H$ is \emph{semi-algebraic of complexity $\mathbf{t}=(d,D,m)$} if the vertices of $H$ can be assigned to points in $\mathbb{R}^{d}$ such that the edges of $H$ are determined by the sign-pattern of $m$ polynomials of degree at most $D$ (see the Preliminaries for a formal definition). Semi-algebraic graphs  and hypergraphs of bounded complexity provide a general model to study certain geometric structures, such as intersection and incidence graphs of geometric objects, order types of point configurations, convex subsets of the plane, and so on. The semi-algebraic Ramsey number $R_{r}^{\mathbf{t}}(s,n)$ denotes the smallest $N$ such that any $r$-uniform semi-algebraic hypergraph of complexity $\mathbf{t}$ on $N$ vertices contains either a clique of size $s$ or an independent set of size $n$. Alon, Pach, Pinchasi, Radoi\v ci\'c, and Sharir \cite{APPRS05} proved that $R_2^{\mathbf{t}}(n)=n^{\Theta(1)}$, which was extended by Conlon, Fox, Pach, Sudakov, and Suk \cite{CFPSS14} to $R_{r}^{\mathbf{t}}(n)=\tower_{r-1}(n^{O(1)})$ for general $r$. In \cite{CFPSS14} and \cite{EMRS14}, matching lower bounds are provided in case the parameters $d,D,m$ are sufficiently large with respect to $r$. Specifically, for every $r\geq 2$, there exists $\mathbf{t}$ such that $R_{r}^{\mathbf{t}}(n)=\tower_{r-1}(n^{\Theta(1)}).$ Here and later, the constants hidden by the $O(.),\Omega(.),\Theta(.)$ notation might depend on $r,\mathbf{t}$ and $s$, unless specified otherwise.
	
	\subsection{Asymmetric Ramsey numbers}
	
	In contrast, asymmetric semi-algebraic Ramsey numbers appear to be more mysterious in case $r\geq 3$. For uniformity $r=3$, in the special subcase $d=1$, it was established in \cite{CFPSS14} that $R_{3}^{\mathbf{t}}(s,n)<2^{(\log n)^{O(1)}}$. 
	Furthermore, if $d\geq 2$, a result of Suk \cite{Suk16} shows that $$R_{3}^{\mathbf{t}}(s,n)<2^{2^{(\log n)^{1/2+o(1)}}}=2^{n^{o(1)}}.$$ 
	However, the best known lower bound constructions provide only polynomial growth, which leads to the natural conjecture that $R_{3}^{\mathbf{t}}(s,n)=n^{O(1)}$, formulated in both \cite{CFPSS14} and \cite{Suk16}. Our first main result refutes this conjecture.
	
	\begin{theorem}\label{thm:construction}
		There exists $\mathbf{t}=(d,D,m)$ such that $$R_{3}^{\mathbf{t}}(4,n)>n^{(\log n)^{1/3-o(1)}}.$$ 
	\end{theorem}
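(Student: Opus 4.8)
The plan is to prove the theorem by an explicit construction: it suffices to produce, for every large $n$, a single $3$-uniform semi-algebraic hypergraph $H=H_n$ of some complexity $\mathbf t=(d,D,m)$ that does \emph{not} depend on $n$, on $N=n^{(\log n)^{1/3-o(1)}}$ vertices, with no clique of size $4$ and no independent set of size $n$; by definition this is exactly $R_3^{\mathbf t}(4,n)>N$. It is natural to look for $H$ with the stronger property that the link of every vertex is triangle-free, since this already forces the clique number to be at most $3$: a clique $\{a,b,c,d\}$ would produce a triangle $\{b,c,d\}$ in the link of $a$.

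I would build $H$ by a hierarchical, self-similar construction, so that the entire point set lives in a fixed $\mathbb R^{d}$ and the edge relation is given by one fixed semi-algebraic predicate for all $n$. Keeping $\mathbf t$ genuinely independent of $n$ is what makes the problem delicate and is why one can only hope for a mildly super-polynomial bound. Concretely: start from a bounded-complexity semi-algebraic $K_3$-free graph $G$ on $b$ vertices with small independence number $\beta=\alpha(G)$ (such graphs arise from algebraic or random-algebraic constructions), realized by a configuration in $\mathbb R^{d}$ via an embedding $\iota$. Take $L$ nested, geometrically shrinking copies of this configuration: a vertex of $H$ is a word $w=(w_1,\dots,w_L)$ over $V(G)$, placed at $\sum_{\ell=1}^{L}\varepsilon^{\ell}\iota(w_\ell)$ for a tiny $\varepsilon>0$, so $N=b^{L}$. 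For any three vertices $u,v,w$ there is a smallest coordinate $\ell^\star$ at which they are not all equal, and because $\varepsilon$ is tiny, any fixed semi-algebraic predicate evaluated on $(u,v,w)$ is controlled by the three images at level $\ell^\star$, together with coarse comparisons of the levels at which the pairs first split; the latter are expressible through sign conditions such as $\norm{u-v}\lessgtr\norm{v-w}$, so the predicate stays of bounded complexity. One then declares edges by a fixed rule of this form, combining the $G$-adjacencies visible at the governing level with the level comparisons.

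Two things then have to be checked, by induction on $L$. First, a stepping-up–type lemma: every vertex-link of $H$ is triangle-free, hence $H$ has no clique of size $4$. I expect this to be the main obstacle: the edge rule must be designed so that a triangle in a link, once one descends to the level $\ell^\star$ that controls its three vertices, forces either a triangle in $G$ (impossible, since $G$ is $K_3$-free) or an impossible configuration of split-levels; one must also handle carefully the degenerate triples in which two of the three words already coincide at the governing level. Second, an independence-number recursion: restricting an independent set of $H$ to a single level-$1$ subtree gives an independent set of the $(L-1)$-level hypergraph, while across distinct level-$1$ subtrees it is constrained through $G$; combining these over all $L$ levels yields a bound on $\alpha(H)$ in terms of $\beta$ and $L$.

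Finally I would optimize the parameters $b,\beta,L$. The construction must be arranged so that $\alpha(H)<n$ holds while $N=b^{L}$ is as large as possible; balancing the single-exponential growth of $N$ in $L$ against the slower growth allowed by the independence-number recursion forces, at the optimum, $N=b^{L}=n^{(\log n)^{1/3-o(1)}}$, the exponent $1/3$ being precisely the output of this optimization. A naive edge rule would instead make $\alpha(H)$ grow too quickly and yield only $N\le n^{O(1)}$, which is consistent with the quasi-polynomial upper bound in the $d=1$ case but does not refute the conjecture; so the real content is designing a rule that is simultaneously expressible by a fixed-complexity predicate on a fixed-dimensional point set, keeps all links triangle-free, and has an independence-number recursion strong enough to beat polynomial. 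At the end one records that $d$, the number of polynomials, and their degrees have been held fixed throughout.
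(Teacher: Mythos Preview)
Your high-level plan matches the paper's: step up from a triangle-free semi-algebraic graph $G$ with small independence number to a $3$-uniform hypergraph $H$, and realize $H$ semi-algebraically by embedding words via a geometric series $\sum_\ell \varepsilon^\ell\iota(\cdot)$. But your specific construction places $G$ in the wrong location, and the two quantitative ingredients that produce the exponent $1/3$ are absent.

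In the paper the vertex set of $H$ is $\{0,1\}^N$ with $N=|V(G)|$; the graph $G$ lives on the \emph{positions} $[N]$, and for $\alpha\prec\beta\prec\gamma$ one declares $\{\alpha,\beta,\gamma\}\in E(H)$ iff $\delta(\alpha,\beta)<\delta(\beta,\gamma)$ and $\{\delta(\alpha,\beta),\delta(\beta,\gamma)\}\in E(G)$. A clique $\alpha_1\prec\cdots\prec\alpha_t$ then forces $\delta_1<\cdots<\delta_{t-1}$ (where $\delta_i=\delta(\alpha_i,\alpha_{i+1})$) to be a clique in $G$, so $\omega(H)\le\omega(G)+1=3$. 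In your layout the words are over $V(G)$ and $G$ acts on the \emph{values} at the governing level; but a $t$-clique in $H$ then only produces the $G$-edges $\{(\alpha_i)_{\delta_i},(\alpha_{i+1})_{\delta_i}\}$ for $i<t$, involving up to $2(t-1)$ distinct vertices of $G$ and forcing no triangle whatsoever --- so the ``main obstacle'' you flag is in fact fatal for this particular setup, and I do not see a fixed-complexity rule on values that repairs it. Moreover, you never identify $G$ or the independence bound, so your optimization cannot output any specific exponent. The paper takes for $G$ the Suk--Tomon point--line incidence graph (triangle-free, semi-algebraic of bounded complexity, $N=\Theta(m^{4/3})$ vertices, $\alpha(G)\le 2m$; the $4/3$ is the Szemer\'edi--Trotter exponent) and proves $\alpha(H)\le N^{\alpha(G)}+1$ via a nontrivial double induction on ``bad subsequences'', not the single-subtree tree recursion you sketch. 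Setting $N^{2m}<n$ gives $m=\Theta(\log n/\log\log n)$ and $|V(H)|=2^N=n^{(\log n)^{1/3-o(1)}}$; the exponent $1/3=4/3-1$ is precisely the gain of Szemer\'edi--Trotter over the trivial bound $\alpha(G)\le |V(G)|$.
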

	
	As is often the case with hypergraph Ramsey numbers, any result one gets for uniformity $r=3$ can be used to derive bounds for higher uniformity as well due to the powerful tool known as the \emph{Stepping-up lemma}. We discuss this lemma in more detail later. Therefore, in case $r\geq 4$, the result of Suk \cite{Suk16} implies that $R_{r}^{\mathbf{t}}(s,n)<\tower_{r-1}(2^{(\log n)^{1/2+o(1)}})$. A straightforward implementation of the methods of \cite{CFPSS14} combined with Theorem \ref{thm:construction} imply the following lower bound. We omit the proof.
	
	\begin{corollary}
		For every $r\geq 3$, there exist $s$ and $\mathbf{t}=(d,D,m)$ such that 
		$$
		R_{r}^{\mathbf{t}}(s,n)>{\operatorname{tw}}_{r-1}((\log n)^{4/3-o(1)}).
		$$
	\end{corollary}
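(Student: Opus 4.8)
The plan is to iterate the semi-algebraic stepping-up lemma of Conlon, Fox, Pach, Sudakov and Suk~\cite{CFPSS14}, feeding in the construction of Theorem~\ref{thm:construction} as the base case $r=3$. First note that Theorem~\ref{thm:construction} can be rewritten in tower form: since $\log\big(n^{(\log n)^{1/3-o(1)}}\big)=(\log n)^{4/3-o(1)}$, it asserts precisely that
\[
R_3^{\mathbf{t}_0}(4,n)>2^{(\log n)^{4/3-o(1)}}=\operatorname{tw}_2\!\big((\log n)^{4/3-o(1)}\big)
\]
for some fixed complexity $\mathbf{t}_0$, which is the $r=3$ case of the corollary. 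For $r\ge 4$ we apply stepping up $r-3$ times.

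Recall the shape of the semi-algebraic stepping-up lemma from~\cite{CFPSS14}: for every $r\ge 3$ there is a map $\mathbf{t}\mapsto\mathbf{t}'$ (depending also on $r$) such that any $r$-uniform semi-algebraic hypergraph of complexity $\mathbf{t}$ on $N$ vertices with no clique of size $s$ and no independent set of size $n$ can be converted into an $(r+1)$-uniform semi-algebraic hypergraph of complexity $\mathbf{t}'$ on at least $2^{N-1}$ vertices with no clique of size $s+1$ and no independent set of size $n+1$. The content lies in the semi-algebraicity: the classical Erd\H{o}s--Hajnal coloring of $(r+1)$-sets, defined via the binary expansions $v\in\{0,1\}^N$ of the vertices and the index $\delta(u,v)$ of their first disagreeing coordinate, must be realized by the sign-pattern of a bounded number of bounded-degree polynomials in suitably chosen coordinates --- which is exactly the construction carried out in~\cite{CFPSS14} --- while the combinatorial fact that monochromatic cliques upstairs force slightly smaller monochromatic cliques downstairs is the standard one.

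Now iterate, starting from the hypergraph witnessing $R_3^{\mathbf{t}_0}(4,n)>\operatorname{tw}_2((\log n)^{4/3-o(1)})$. After $i$ steps one obtains a $(3+i)$-uniform semi-algebraic hypergraph whose complexity $\mathbf{t}_i$ is a fixed tuple (since $i\le r-3$ and $r$ is fixed), on $\operatorname{tw}_{2+i}((\log n)^{4/3-o(1)})$ vertices (the constant-factor loss incurred at each of the $O_r(1)$ steps being absorbed into the $o(1)$), with no clique of size $4+i$ and no independent set of size $n+i$. Taking $i=r-3$, put $s:=r+1$ and $\mathbf{t}:=\mathbf{t}_{r-3}$: this is an $r$-uniform semi-algebraic hypergraph of complexity $\mathbf{t}$ on $\operatorname{tw}_{r-1}((\log n)^{4/3-o(1)})$ vertices with no $K_s^{(r)}$ and no independent set of size $n+r-3$. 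Since $\log(n+r-3)=(1+o(1))\log n$, we have $\operatorname{tw}_{r-1}((\log n)^{4/3-o(1)})=\operatorname{tw}_{r-1}((\log(n+r-3))^{4/3-o(1)})$, so this hypergraph shows $R_r^{\mathbf{t}}(s,n+r-3)>\operatorname{tw}_{r-1}((\log(n+r-3))^{4/3-o(1)})$; renaming $n+r-3$ as $n$ yields the claimed bound.

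The only genuine obstacle is the stepping-up lemma itself, but here we merely reuse the machinery of~\cite{CFPSS14} with a different base hypergraph, so no new difficulty appears, which is why the proof is omitted. (If one instead invokes a form of stepping up in which the clique and independence numbers change by a bounded multiplicative factor at each step rather than by $1$, nothing is affected: after the $r-3=O_r(1)$ applications the clique number is still a constant and the forbidden independent-set size is still $O(n)$, which is all the conclusion requires.)
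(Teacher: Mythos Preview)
Your proposal is correct and follows exactly the approach the paper indicates (and then omits): feed Theorem~\ref{thm:construction} as the $r=3$ base case into the semi-algebraic stepping-up machinery of~\cite{CFPSS14}, iterate $r-3$ times, and absorb the $O_r(1)$ losses into the $o(1)$. One small inaccuracy: the classical Erd\H{o}s--Hajnal stepping-up for $r\ge 3$ sends $(s,n)$ to roughly $(2s+r-4,\,2n+r-4)$ rather than $(s+1,n+1)$, but as you yourself observe in the closing parenthetical, after $r-3=O(1)$ iterations this still leaves the forbidden clique size a constant and the forbidden independent-set size $O_r(n)$, so nothing changes.
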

	
	\subsection{Semi-linear hypergraphs}
	
	As discussed above, if $d,D,m$ are sufficiently large with respect to $r$, then $R^{d,D,m}_r(n)=\tower_{r-1}(n^{\Omega(1)})$. 
	In the constructions provided by both \cite{CFPSS14} and \cite{EMRS14}, the parameters $d$ and $D$ grow with $r$. 
	In particular, \cite{EMRS14} shows that one can take $d=r-3$ for $r\geq 4$. Furthermore, the Veronese mapping\footnote{A Veronese mapping sends $(x_1,\dots,x_d)\in\mathbb{R}^{d}$ to some point whose coordinates are monomials of $x_1,\dots,x_d$. E.g. $(x_1,x_2,x_3)\mapsto (x_1^{2}x_2,x_2^{2}x_3^{2},x_1x_2x_3,x_3^{3})$.} implies that every $r$-uniform semi-algebraic hypergraph of complexity $(d,D,m)$ is also of complexity $(d',r,m)$ for some $d'$ depending only on $d$ and $D$. However, this raises the question whether the upper bound $R^{d,D,m}_r(n)<\tower_{r-1}(n^{O(1)})$ can be significantly improved if we assume that $d$ or $D$ are small compared to $r$. 
	In support of this, Bukh and Matou\v sek \cite{BM12} showed that if $d=1$, that is, when the vertices of the hypergraph correspond to points on the real line, then any $r$-uniform semi-algebraic hypergraph of complexity $(1,D,m)$ containing no clique or independent set of size~$n$ has at most $2^{2^{O(n)}}$ vertices (in \cite{BM12}, the constant hidden by the $O(.)$ notation might depend on the defining polynomials, but a careful inspection of their proof yields that it can be bounded only by a function of $D,m$ and $r$ as well). Also, this bound is the best possible if $D$ and $m$ are sufficiently large. In this paper, we consider what happens if we bound the parameter $D$ instead, that is, the degrees of the defining polynomials.
	
	A semi-algebraic hypergraph of complexity $(d,D,m)$ is \emph{semi-linear}, if $D=1$, that is, all defining polynomials are linear functions. The study of semi-linear hypergraphs was initiated by Basit, Chernikov, Starchenko, Tao, and Tran \cite{BCSTT21}, who considered these hypergraphs in the setting of Zarankiewicz's problem. There are many extensively studied families of graphs that are semi-linear of bounded complexity, for example intersection graphs of axis-parallel boxes in $\mathbb{R}^{d}$, circle graphs, and shift graphs. Motivated by the large literature (e.g. \cite{AG60,CW20,DM19,EH64,LMPT94}) concerned with the Ramsey properties of such families, Tomon \cite{Tlin} studied the Ramsey properties of semi-linear graphs and showed that $R_{2}^{d,1,m}(s,n)\leq n^{1+o(1)}$ holds for every fixed $s,d$ and $m$. 
	This already shows a behavior unique to semi-linearity, as a construction of Suk and Tomon \cite{ST21} shows that $R_2^{d,2,m}(3,n)=\Omega(n^{4/3})$ for some $d$ and $m$. 
	Tomon \cite{Tlin} also proposed the problem of determining the Ramsey numbers of $r$-uniform semi-linear hypergraphs for $r\geq 3$. Our second main result settles this problem.
	
	\begin{theorem}\label{thm:linear}
		For every triple of positive integers $r,d,m$, there exists $c=c(r,m)>0$ such that 
		$$R_{r}^{d,1,m}(n)\leq 2^{cn^{4r^{2}m^{2}}}.$$
	\end{theorem}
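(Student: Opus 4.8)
The plan is to prove a much stronger statement by induction on the number of hyperplanes $m$, using a "partition refinement" argument that is standard for semi-linear structures. Recall that a semi-linear hypergraph of complexity $(d,1,m)$ is determined by a point set $P \subset \mathbb{R}^d$ together with $m$ affine functions $f_1, \dots, f_m$ on $\mathbb{R}^{rd}$ such that the edge relation on an $r$-tuple depends only on the sign pattern $(\sign f_1, \dots, \sign f_m)$ evaluated at that tuple. The key structural feature of linearity is that each $f_i$, restricted to an $r$-tuple with the last $r-1$ vertices fixed and the first one varying over $P$, is an affine function of a single point $x \in \mathbb{R}^d$; its zero set is a hyperplane in $\mathbb{R}^d$. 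So the whole edge-behavior of a tuple $(x, y_2, \dots, y_r)$ as $x$ ranges over $P$ is governed by how $x$ lies with respect to an arrangement of at most $m$ hyperplanes (whose normals are among a fixed finite set determined by the $f_i$, although their offsets depend on $y_2, \dots, y_r$).

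First I would set up the induction so that the inductive hypothesis controls $R_r^{d,1,m}(n)$ in terms of $R_r^{d,1,m-1}(\cdot)$ and of a lower-uniformity quantity, the idea being: pick a vertex $v$; the link of $v$ is an $(r-1)$-uniform hypergraph, but it is \emph{not} semi-linear of bounded complexity in the naive sense because the offsets shift. The standard fix (as in Bukh--Matou\v sek) is to first pass to a large subset of $P$ on which the "relevant" coordinate directions are monotone, i.e. order the points of $P$ by their projection onto each of the finitely many normal directions appearing among the $f_i$, and use Dilworth/Erd\H{o}s--Szekeres to find a subset of size $|P|^{1/\mathrm{poly}}$ that is simultaneously sorted in all these directions. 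On such a "grid-like" subset, for each fixed tail $(y_2,\dots,y_r)$ the hyperplane $\{x : f_i(x,y_2,\dots,y_r)=0\}$ cuts the sorted sequence into two intervals, so the link of a vertex becomes equivalent to a semi-linear hypergraph on a sorted point set where each constraint is now effectively a threshold — i.e. one reduces $d$ or reduces to the $d=1$ situation at the cost of increasing $m$ polynomially and losing a polynomial factor in the vertex count. Iterating this, one bottoms out at $(1,1,m')$ with $m'$ bounded by a polynomial in $m$ (degree roughly $r$), where one can either invoke the Bukh--Matou\v sek bound $2^{2^{O(n)}}$ directly or, better, redo the argument to get the single exponential — on the line, a semi-linear $r$-uniform hypergraph with $m'$ constraints becomes, after sorting, a hypergraph whose edge relation on an ordered $r$-tuple depends only on the "interval pattern," of which there are at most $n^{O(m')}$ as a function of the tuple, giving a bounded-complexity "ordered" Ramsey problem that is well known to have a single-exponential bound $2^{n^{O(m')}}$.

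The cleanest route to the exponent $4r^2m^2$ is probably to isolate the following lemma: any $r$-uniform semi-linear hypergraph of complexity $(d,1,m)$ on $N$ vertices contains a \emph{homogeneous} (monochromatic) subset of size at least $N^{1/(2rm)^{2}}$ or so, by the sorting argument above applied once per uniformity level — each level sorts along $\le m$ directions (Erd\H{o}s--Szekeres costs a square root per direction, $m$ directions compound to an $m$-th root of a root, i.e. exponent $1/2^{?}$, but one should instead use the $d$-dimensional Erd\H{o}s--Szekeres / Dilworth bound to keep it polynomial, exponent $\sim 1/m$ per level, $\sim 1/m^{r}$ total) — no wait, to land on $2^{cn^{4r^2m^2}}$ one wants: after reducing to the line, the number of distinct "types" of an $r$-tuple is $\le (\text{number of interval patterns}) = O(n^{rm})$-ish, and the ordered Ramsey bound is $2^{(\#\text{types})^{O(1)} \cdot n}$; chasing the polynomial blow-ups from $d$ reductions to $m$ and from the sorting losses gives the stated $n^{4r^2m^2}$. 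I would not optimize the exponent carefully in a first draft; the point is that it is a fixed polynomial in $n$ depending only on $r$ and $m$ (crucially \emph{not} on $d$, which only affects the number of reduction steps and hence the constant $c$).

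\textbf{Main obstacle.} The technical heart — and the step most likely to hide an error — is the reduction from dimension $d$ to dimension $1$ while keeping the complexity and the homogeneous-set loss under control: one must verify that after restricting to a simultaneously-sorted subset, the induced hypergraph on any link (and more generally on any "slice") really is semi-linear of complexity $(d-1,1,m')$ with $m'$ polynomial in $m$, i.e. that the offset-shifts introduced by fixing a tail can be absorbed into boundedly many new linear constraints rather than proliferating. Making this bookkeeping precise — tracking how the finite set of normal directions, the number of constraints, and the loss factor evolve across the $O(d)$ levels of recursion and across the $r$ levels of uniformity, and then combining with a clean single-exponential bound for the base case $(1,1,m')$ — is where essentially all the work lies. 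Everything else (Erd\H{o}s--Szekeres in several directions, the ordered-Ramsey base case) is routine.
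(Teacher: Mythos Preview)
Your plan has a genuine gap at its core, and also misses the structural observation that makes the theorem $d$-free. The paper's first move (Lemma~\ref{lemma:primitive}) is to note that a linear function $f(x_1,\dots,x_r)=\sum_{i}\langle a_i,x_i\rangle+b$ on $(\mathbb{R}^d)^r$ depends only on the $r$ scalars $\langle a_i,x_i\rangle$; hence any semi-linear hypergraph of complexity $(d,1,m)$ is a Boolean combination of $2m$ \emph{primitive} hypergraphs, each governed by a single $r\times N$ real matrix $P$ via the rule ``$\{q_1<\dots<q_r\}$ is an edge iff $\sum_i P(i,q_i)<0$''. This step eliminates $d$ immediately---no dimension reduction is needed, and indeed your proposed $O(d)$-level recursion would make $c$ depend on $d$, contradicting the statement $c=c(r,m)$.

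More seriously, even after you sort along all normal directions (which is the right instinct, and corresponds to the paper's row-monotone step), the constraint on an $r$-tuple is still a \emph{sum} inequality $\sum_i P(i,q_i)<0$, not a threshold or interval-pattern condition. There is no ``well-known ordered Ramsey'' black box that handles such sums; your claim of a homogeneous set of size $N^{1/(2rm)^2}$ is in fact impossible, since already $R_3^{1,1,1}(n)\ge 2^{\Omega(n)}$. The paper's actual engine is: (i) find a large submatrix in which every row is a cup or a cap (this is where the logarithm enters, via an Erd\H{o}s--Szekeres-type argument giving $(\log N)^{1/(rk)}$ columns); (ii) observe that a shifted cup/cap is \emph{exponentially} growing, so in any sum $\sum_i P(i,q_i)$ one term dominates; (iii) reduce to finding a large clique in an auxiliary ``domination hypergraph'' that records which coordinate dominates, and prove a polynomial Ramsey bound for those (Lemma~\ref{lemma:domination}). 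Steps (i)--(iii) are the real content; your sketch does not contain an analogue of any of them, and the link/uniformity-reduction idea you describe does not obviously produce a semi-linear hypergraph of bounded complexity, since fixing a vertex shifts all offsets in a tuple-dependent way.
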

	
	Let us highlight that the bound in Theorem \ref{thm:linear} does not depend on the dimension $d$, only on the uniformity $r$ and the number of polynomials $m$. From below, in case $r\geq 3$, the semi-linear Ramsey number grows at least exponentially, showing that Theorem \ref{thm:linear} is sharp up to the value of $c$ and the exponent $4r^{2}m^{2}$.	Indeed, let $H$ be the 3-uniform hypergraph on vertex set $\{1,\dots,N\}$ in which for $x<y<z$, $\{x,y,z\}$ is an edge if $x+z<2y$. Then $H$ is semi-linear of complexity $(1,1,1)$, and it is easy to show that $\omega(H),\alpha(H)\leq \lceil \log_2 N\rceil+1$. Thus, $R^{1,1,1}_{3}(n)\geq 2^{\Omega(n)}$. We show that even faster growth can be achieved by examining certain more convoluted constructions of higher uniformity.
	
	\begin{theorem}\label{thm:lin_constr}
		For every $r\geq 4$, there exists a constant $c>0$ such that  $$R_{r}^{1,1,1}(n)\geq 2^{cn^{\lfloor r/2\rfloor-1}}.$$
	\end{theorem}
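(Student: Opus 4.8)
The plan is to construct, for each $r\geq 4$, an explicit $r$-uniform semi-linear hypergraph of complexity $(1,1,1)$ on $N$ vertices with $\omega(H),\alpha(H)\leq O(N^{1/(\lfloor r/2\rfloor-1)})$, which immediately gives the stated bound on $R_r^{1,1,1}(n)$. The natural starting point is the uniformity-$3$ construction mentioned just above the theorem: on $\{1,\dots,N\}$, declare $\{x,y,z\}$ with $x<y<z$ an edge iff $x+z<2y$, equivalently iff $y-x>z-y$, i.e. the left gap exceeds the right gap. One checks its clique and independence numbers are logarithmic by a doubling argument: in a clique, consecutive gaps strictly increase (or decrease), so they grow geometrically; similarly for an independent set. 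The goal is to iterate/amplify this ``gap comparison'' idea so that each extra pair of uniformity levels buys one more polynomial factor in $N$, i.e. reduces $\log(\omega\alpha)$ from a tower-like dependence to a single power of $N^{1/(\lfloor r/2\rfloor -1)}$.

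Concretely, I would take the vertex set to be a grid, say $V=\{1,\dots,M\}^{k}$ with $k=\lfloor r/2\rfloor -1$, so $N=M^{k}$, ordered lexicographically (or use any bijection to $\{1,\dots,N\}$ that makes the coordinatewise structure linearly definable — since $d=1$ is required, one actually must encode the grid point as a single real number, e.g. $v\mapsto \sum_i v_i B^{i}$ in a large base $B$, so that the first ``differing coordinate'' is readable from the real value by a single linear inequality on well-separated scales). For an $r$-tuple $v^{(1)}<\dots<v^{(r)}$ of vertices, the defining inequality should first locate, among the $r-1$ consecutive differences, which coordinate-scale dominates — this is where the base-$B$ trick makes a comparison like $2v^{(i+1)}<v^{(i)}+v^{(i+2)}$ effectively a comparison of the appropriate coordinate — and then compare gaps within that scale, exactly as in the $r=3$ base case but now applied coordinatewise across $k$ levels. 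Pairing up the $r-1$ gaps into roughly $\lfloor (r-1)/2\rfloor = \lfloor r/2\rfloor$ blocks and forcing, on each block, the base-case monotone-gap behavior in one of the $k$ coordinates, a clique or independent set is forced to have geometrically growing gaps in every coordinate simultaneously, hence size at most $O(\log M)$ per coordinate is false — rather, the right bound is that the projection to each coordinate is monotone in gaps, yielding $|S|\leq O(M)$ overall but with the product structure giving $\omega,\alpha = O(N^{1/k})=O(N^{1/(\lfloor r/2\rfloor-1)})$; then $n\leq O(N^{1/k})$ forces $N\geq \Omega(n^{k})$... which is only polynomial.

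So the real content must be the reverse: the construction should make $\omega,\alpha$ \emph{logarithmic in each of $k$ independent scales whose product is $N$}, giving $\omega,\alpha=O((\log N)^{?})$ — no, the target $R_r^{1,1,1}(n)\geq 2^{cn^{\lfloor r/2\rfloor-1}}$ means we need $\omega(H),\alpha(H)\leq O((\log N)^{1/(\lfloor r/2\rfloor-1)})$ on $N$ vertices. Hence the correct plan: build $H$ on $N$ vertices so that any homogeneous set $S$ satisfies $|S|^{\lfloor r/2\rfloor-1}\leq O(\log N)$. I would achieve this by a \emph{recursive/product construction}: start from the $3$-uniform example $G$ with $\omega,\alpha\leq \log N$, and build an $(r+2)$-uniform hypergraph from an $r$-uniform one by a stepping-up-type operation \emph{within the semi-linear regime} — replace each vertex by a real coordinate, add one new ``most significant'' linear coordinate, and define an $(r+2)$-edge on $v^{(1)}<\dots<v^{(r+2)}$ using a single linear inequality that, depending on the sign of a suitable $2$-gap comparison in the new coordinate (computable by one linear form because the new scale dominates), either references the behavior of the induced $r$-tuple $\{v^{(1)},\dots,v^{(r)}\}$ or $\{v^{(3)},\dots,v^{(r+2)}\}$ in the inherited structure; the point is that one linear inequality can simulate this branching because all the relevant quantities live on separated scales of a single real variable. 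Crucially the number of polynomials stays $m=1$ and the dimension stays $d=1$ throughout. One then argues that a homogeneous set of size $t$ in the $(r+2)$-uniform hypergraph yields, via the new-coordinate monotonicity, a homogeneous set of size $\Omega(t)$ in the $r$-uniform hypergraph but living in an interval of doubly-reduced length, so the bound $|S|^{k-1}=O(\log N)$ propagates with $k$ increasing by $1$ each step.

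The main obstacle — and where I would spend the most care — is the single-inequality, $d=1$ constraint: encoding a product/grid or a recursive branching into \emph{one} linear polynomial on the real line. The standard stepping-up lemma is a genuinely combinatorial gadget and does not obviously preserve semi-linearity with $m=1$; so the heart of the proof is finding the right ``separated scales'' encoding (a base-$B$ expansion with $B$ chosen large relative to $M$ and $r$) under which (i) the dominant coordinate of a difference $v^{(i)}-v^{(j)}$ is read off by a single linear inequality, and (ii) the desired branch-and-recurse edge rule is itself a single linear inequality in the $v$'s. I expect this to come down to checking that with $v\mapsto\sum_j v_j B^{j}$ and an inequality such as $a_1 v^{(1)}+\dots+a_r v^{(r)}<0$ with carefully chosen integer coefficients $a_i$ (some huge, some tiny, tracking the scales), the sign is governed exactly by the intended comparison; verifying the clique/independence bound is then the (routine) doubling argument iterated across the $k$ scales, and verifying $r+2$ rather than $r+1$ per scale accounts for the $\lfloor r/2\rfloor$ rather than $r$ in the exponent. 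I would present the $r=3$ and $r=4$ cases explicitly as the base, then do the induction on $k$, deferring the coefficient bookkeeping to a lemma.
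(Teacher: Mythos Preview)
Your proposal is a plan, not a proof, and its central step is unresolved. You correctly identify the target---a complexity-$(1,1,1)$ semi-linear $r$-uniform hypergraph on $[N]$ with $\omega,\alpha\le O((\log N)^{1/(\lfloor r/2\rfloor-1)})$---and the instinct that well-separated scales on the real line should drive the construction is right. But your recursive scheme asks a single linear inequality to implement a \emph{branch}: first decide which scale dominates the consecutive differences, then apply the lower-uniformity rule to one sub-$r$-tuple or another. One half-space cannot encode an if--then--else; with $m=1$ the sign of $\sum_i a_i v^{(i)}$ at separated scales is governed by its dominant term, not by a conditional on \emph{which} term dominates. You flag this as ``the main obstacle'' and then defer it. I do not see how to close it within $m=1$, and the paper does not attempt anything of the sort.

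The paper gives a direct, non-recursive construction. Writing $r=2q+2$, take $V=[N]$ and the single linear form
\[
f_q(x_0,\dots,x_q,y_q,\dots,y_0)=\sum_{i=0}^{q}(-1)^{i}t_i(y_i-x_i),
\]
with an edge iff $f_q>0$; here $t_i=s_1\cdots s_i$ for a tuned decreasing sequence $s_1>\dots>s_q$, ultimately $s_i=2^{(\log N)^{(q-i)/q}}$ and $s_q$ a constant. The alternating sum already mixes all $q$ scales into one sign---no branching needed. For a homogeneous set $a_1<\dots<a_n$ one passes to the nested differences $b_j=a_{n+1-j}-a_j$ and greedily selects indices $k_0<k_1<\dots$ so that at each step either $b_{k_j-1}<\tfrac{s_{2j+1}}{2}b_{k_j}$ or $2s_{2j}b_{k_j-1}<b_{k_{j-1}}$; a telescoping estimate on the partial sums of $f_q$ then contradicts the edge/non-edge condition. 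Counting how many indices this can consume gives $\omega,\alpha\le O\bigl(s_q+\frac{\log N}{\log s_1}+\sum_i\frac{\log s_{2i}}{\log s_{2i+1}}\bigr)$, and the choice of the $s_i$ balances every term to $O((\log N)^{1/q})$. The idea missing from your plan is exactly this: one alternating weighted sum of \emph{nested} gaps, with $q$ weight-ratios placed on a geometric ladder of exponents, replaces recursion and any need for branching.
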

	
	Finally, we remark that a multicolor variant of Theorem \ref{thm:linear} also holds.
	
	\begin{theorem}\label{thm:multicolor}
		Let $r,d,m,p$ be positive integers, then there exists $c>0$ such that the following holds. Let $H$ be the complete $r$-uniform hypergraph on $N$ vertices, whose edges are colored with $p$ colors (every edge receiving possibly more than one color) such that each colorclass is semi-algebraic of complexity $(d,1,m)$. Then $H$ contains a monochromatic clique of size at least $(\log N)^{c}$.
	\end{theorem}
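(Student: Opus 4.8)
The plan is to deduce Theorem~\ref{thm:multicolor} from Theorem~\ref{thm:linear} by merging the $p$ colour classes into a single semi-linear structure and then extracting a \emph{completely homogeneous} subset, i.e.\ a set $S\subseteq V(H)$ such that all increasing $r$-tuples from $S$ (with respect to a fixed ordering of $V(H)$) realise one and the same sign pattern with respect to a prescribed family of linear functions. The point is that a completely homogeneous set for the merged structure is automatically a monochromatic clique.

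For each colour $i\in[p]$, fix a point assignment $\pi_i\colon V(H)\to\mathbb{R}^{d}$, linear functions $f^{(i)}_1,\dots,f^{(i)}_m\colon(\mathbb{R}^{d})^{r}\to\mathbb{R}$, and a Boolean rule $\Phi_i$ witnessing that colour class $i$ has complexity $(d,1,m)$. Send each $v\in V(H)$ to $\pi(v):=(\pi_1(v),\dots,\pi_p(v))\in\mathbb{R}^{dp}$ and regard each $f^{(i)}_j$ as a linear function on $(\mathbb{R}^{dp})^{r}$ depending only on the coordinates belonging to colour $i$. This produces an $r$-uniform semi-linear structure defined by the $pm$ linear functions $\{f^{(i)}_j\}_{i\in[p],\,j\in[m]}$ in dimension $dp$, with the feature that the sign pattern of these $pm$ functions on an increasing $r$-tuple determines, through $\Phi_1,\dots,\Phi_p$, the full set of colours received by the corresponding edge; by hypothesis this set is always nonempty. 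Hence on a completely homogeneous set $S$ every $r$-subset receives the same nonempty colour set $C$, and choosing any $i\in C$ exhibits $S$ as a monochromatic clique in colour $i$. It therefore suffices to find a completely homogeneous set of size $(\log N)^{\Omega(1)}$.

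One cannot afford to obtain such a set by invoking Theorem~\ref{thm:linear} once for each of the $2pm$ sign conditions ``$f^{(i)}_j>0$'' and ``$f^{(i)}_j\ge 0$'': each invocation only passes to a subset of logarithmic size, so after $\Theta(pm)$ steps one is left with a set of merely iterated-logarithmic size, far short of the target. Instead one should use that the proof of Theorem~\ref{thm:linear} in fact delivers the stronger assertion that, for a suitable $c=c(r,M)>0$, every $r$-uniform semi-linear structure defined by $M$ linear functions (in any dimension) on $N$ vertices contains a completely homogeneous set of size at least $(c^{-1}\log N)^{1/(4r^{2}M^{2})}$ --- as such arguments typically work at the level of sign patterns and never use that the defining rule is a single edge/non-edge condition, and, as already noted for Theorem~\ref{thm:linear}, the bound does not depend on the dimension. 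Feeding the merged structure (with $M=pm$) into this statement yields a completely homogeneous set, hence a monochromatic clique, of size at least $(c^{-1}\log N)^{1/(4r^{2}p^{2}m^{2})}\geq(\log N)^{c'}$ for some $c'=c'(r,m,p)>0$ and all sufficiently large $N$, which is the claimed bound.

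The only step demanding genuine work beyond Theorem~\ref{thm:linear} is this last strengthening: one must verify that its proof returns completely homogeneous sets rather than merely cliques-or-independent-sets, with the stated dimension-free dependence on the number of defining functions. The remaining ingredients --- the product construction of the merged structure and the passage from a completely homogeneous set to a monochromatic clique --- are routine.
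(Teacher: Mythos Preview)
Your proposal is correct and follows essentially the same route as the paper. The paper's proof likewise observes that the argument for Theorem~\ref{thm:linear} in fact produces a set $C$ that is simultaneously a clique or an independent set in \emph{each} of the $k=2mp$ primitive semi-linear hypergraphs obtained by decomposing the $p$ colour classes via Lemma~\ref{lemma:primitive}; such a $C$ is then automatically monochromatic. Your product embedding into $\mathbb{R}^{dp}$ with $pm$ linear functions is just an alternative bookkeeping for the same decomposition, and your ``completely homogeneous set'' is exactly the simultaneous clique/independent-set property the paper exploits.
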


	\textbf{Organization.} Our paper is organized as follows. In the next subsection, we introduce our notation and present the definition of semi-algebraic hypergraphs. In Section \ref{sect:lowerbound}, we present the lower bound construction for the asymmetric semi-algebraic Ramsey number, that is, we prove Theorem \ref{thm:construction}. Then, in Section \ref{sect:semilinear}, we discuss semi-linear hypergraphs, and present the proofs of Theorems \ref{thm:linear}, \ref{thm:lin_constr}, and \ref{thm:multicolor}. We finish our paper with some discussion and open problems in Section \ref{sect:concluding remarks}.
	
	\subsection{Preliminaries} \label{sec:preliminary}
        If $v$ is a vector or a sequence, we use $v(i)$ to denote its $i$-th entry. If $M$ is a matrix, $M(i,j)$ denotes the  entry in the $i$-th row and  $j$-th column.
        
	We use the following standard graph theoretic notation. Given an $r$-uniform hypergraph $H$, $\omega(H)$ denotes the \emph{clique number} of $H$, and $\alpha(H)$ denotes its independence number. Given $U\subset V(H)$, $H[U]$ is the subghypergraph of $H$ induced on $U$.
	
	The \emph{degree} (total degree) of a multivariate polynomial is the maximal sum of exponents in a monomial. 
	A polynomial $p:\mathbb{R}^{d}\mapsto \mathbb{R}$ is \emph{linear} if it has degree 1, or equivalently, if there exist $a_1,\dots,a_d,b\in\mathbb{R}$ such that $p(x_1,\dots,x_d)=b+\sum_{i=1}^{d}a_ix_i$.
	
	An $r$-uniform hypergraph $H$ is \emph{semi-algebraic of complexity $(d,D,m)$} if the following holds. 
	There is an enumeration $v_1,\dots,v_{N}$ of the vertices of $H$, an assignment $v_{i}\mapsto p_{i}$ with $p_{i}\in\mathbb{R}^{d}$ for $i\in [N]$, and $m$ polynomials $f_1,\dots,f_m:(\mathbb{R}^{d})^{r}\mapsto \mathbb{R}$ of degree at most $D$ such that for $1\leq i_1<\dots<i_r\leq N$, whether $\{v_{i_1},\dots,v_{i_r}\}$ is an edge of $H$  depends only on the sign-pattern of $(f_1(p_{i_1}, \dots, p_{i_r}),\dots, f_m(p_{i_1},\dots,p_{i_r}))$.
	More precisely, there is a function $\Phi: \{+, -, 0\}^m \mapsto \{\mbox{True}, \mbox{False}\}$ such that $\{v_{i_1},\dots,v_{i_r}\}$ is an edge if and only if 
	$$\Phi(\mbox{sign}(f_1(p_{i_1}, \dots, p_{i_r})),\dots, \mbox{sign}(f_m(p_{i_1},\dots,p_{i_r}))) = \mbox{True}.$$
	Setting $P=\{p_1,\dots,p_{N}\}$ and $\mathbf{f}=(f_1,\dots,f_m)$, we say that $(P,\mathbf{f})$ is a \emph{witness} for the pair $(H,(d,D,m))$. The hypergraph $H$ is \emph{semi-linear of complexity $(d,m)$} if it is semi-algebraic of complexity $(d,1,m)$.
	
	A hypergraph $H$ is the \emph{Boolean combination} of the hypergraphs $H_1,\dots,H_{k}$, if $V(H_{i})=V(H)$ for $i\in [k]$, $H_{i}$ has the same vertex-ordering as $H$, and there exists a function $\phi:\{\True, \False\}^{k}\mapsto \{\True,\False\}$ such that $e$ is an edge of $H$ if and only if
	$$\phi(\{e\in E(H_1)\},\dots,\{e\in E(H_{k}\}))=\True.$$
	Note that if a hypergraph is semi-algebraic of complexity $(d,D,m)$, then it is the Boolean combination of at most $2m$ semi-algebraic hypergraphs of complexity $(d,D,1)$.
	
	We systematically omit the use of floors and ceilings whenever they are not crucial.
	
	\section{A lower bound for \texorpdfstring{$R_3^{\mathbf{t}}(4, n)$}{TEXT}}\label{sect:lowerbound}
	
	\subsection{The stepping-up lemma\label{subsec:step-up-graph}}
	The stepping-up lemma of Erd\H{o}s and Hajnal (see, e.g., \cite{stepping_up}) is a powerful tool used for the construction of hypergraphs with good Ramsey properties.
	It shows that if one can construct an $r$-uniform hypergraph on $N$ vertices with no clique of size $s$ and no independent set of size $n$, then one can also construct an $(r+1)$-uniform hypergraph on $2^{N}$ vertices with no clique of size $(2s+r-4)$ and no independent set of size $(2n+r-4)$. 
	Unfortunately, it works only when $r\geq 3$. 
	In this section, we consider a variant of the stepping-up lemma, which works for the case $r=2$, albeit resulting in much weaker bounds.
	
	Let us introduce some notation. For two distinct sequence $\alpha,\beta\in\{0,1\}^N$, let 
	$$\delta(\alpha, \beta) := \min\{i: \alpha(i) \neq \beta(i)\} \in [N].$$
	Let $\prec$ be the  lexicographical order over $\{0,1\}^{N}$, i.e.
	$$\alpha\prec\beta \Leftrightarrow \alpha(\delta(\alpha,\beta))< \beta(\delta(\alpha,\beta)).$$
	Let us highlight some basic properties of $\delta(\cdot,\cdot)$ and $\prec$ (see \cite[Section 4.7]{stepping_up} for the proofs).
	\begin{enumerate}
		\item If $\alpha \prec \beta \prec \gamma$, then $\delta(\alpha, \beta) \neq \delta(\beta,\gamma)$.
		\item If $\alpha_1 \prec \dots \prec \alpha_\ell$, then $\delta(\alpha_1,\alpha_\ell) = \min_{i\le\ell-1} \delta(\alpha_i,\alpha_{i+1})$.
		\item If $\alpha_1 \prec \dots \prec \alpha_\ell$, there is a unique $i$ which achieves the minimum of $\delta(\alpha_i,\alpha_{i+1})$.
	\end{enumerate}
	Now we define our notion of the step-up.
	
	\begin{definition}
		Given a graph $G$ on vertex set $[N]$, the \emph{step-up} of $G$ is the 3-uniform hypergraph $H$ on vertex set $\{0,1\}^{N}$ defined as follows. For $\alpha,\beta,\gamma\in \{0,1\}^{N}$ with $\alpha \prec \beta \prec \gamma$, we have $\{\alpha,\beta,\gamma\}\in E(H)$ if and only if $\delta(\alpha,\beta)< \delta(\beta,\gamma)$ and $\{\delta(\alpha,\beta), \delta(\beta,\gamma)\} \in E(G)$.
	\end{definition}
	
	The next lemma gives a relation between the clique and independence number of $G$ and the clique and independence number of the step-up of $G$.
	
	\begin{lemma}\label{lemma:hom}
		Let $G$ be a graph on $N$ vertices, and let $H$ be the step-up of $G$. Then $\omega(H) \leq \omega(G)+1$ and $\alpha(H) \leq N^{\alpha(G)}+1$.
	\end{lemma}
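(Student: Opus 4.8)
The plan is to analyze cliques and independent sets of the step-up $H$ by tracking, for each set of binary strings in lexicographic order, the sequence of consecutive $\delta$-values, and then invoke the three basic properties of $\delta$ and $\prec$ listed above. For the clique bound, suppose $\alpha_1 \prec \dots \prec \alpha_\ell$ form a clique in $H$. For each consecutive pair set $\{\alpha_i,\alpha_{i+1},\alpha_{i+2}\}$ we need $\delta(\alpha_i,\alpha_{i+1}) < \delta(\alpha_{i+1},\alpha_{i+2})$, so the values $d_i := \delta(\alpha_i,\alpha_{i+1})$ are strictly increasing in $i$; moreover for any triple $i<j<k$, property~2 gives $\delta(\alpha_i,\alpha_j) = d_i$ and $\delta(\alpha_j,\alpha_k) = d_j$ (the minimum of an increasing run is its first term), and the edge condition forces $\{d_i, d_j\} \in E(G)$. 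Hence $d_1 < d_2 < \dots < d_{\ell-1}$ are $\ell-1$ distinct vertices of $G$ that are pairwise adjacent, i.e.\ a clique of size $\ell-1$ in $G$. Therefore $\ell - 1 \le \omega(G)$, giving $\omega(H) \le \omega(G) + 1$.

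For the independence number, suppose $A \subseteq \{0,1\}^N$ is an independent set in $H$ with $|A| = N^{\alpha(G)} + 2$, aiming for a contradiction. I would build a long increasing chain greedily using a pigeonhole argument on $\delta$-values. Order $A$ lexicographically; the idea is that if a subset $\beta_1 \prec \dots \prec \beta_k$ has the property that all consecutive $\delta$-values are equal, or more generally that the map $i \mapsto d_i$ is constant, then by the edge rule (which requires the two $\delta$-values to be \emph{distinct} and the smaller-then-larger pattern) no triple among them is an edge — but that alone does not give independence issues. The right approach: within $A$, look for a chain $\beta_1 \prec \dots \prec \beta_k$ such that the consecutive $\delta$-values $d_1 < d_2 < \dots < d_{k-1}$ are strictly increasing; then exactly as in the clique analysis, any edge of $H$ inside this chain corresponds to an edge of $G$ among the $d_i$'s, so an independent set inside the chain corresponds to an independent set in $G$ among the $d_i$'s. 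Concretely, if we can extract such a strictly-increasing-$\delta$ chain of length $k$ from $A$, then $\{d_1,\dots,d_{k-1}\}$ must be an independent set in $G$ (since $A$ is independent in $H$), forcing $k - 1 \le \alpha(G)$.

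So the crux is: from any set $A$ of size $N^{\alpha(G)}+2$, extract a chain of length $\alpha(G)+2$ whose consecutive $\delta$-values are strictly increasing. This is the main obstacle and I expect it to follow from a recursive pigeonhole. Take the lexicographically sorted $A$; by property~3 there is a unique consecutive pair realizing the global minimum $\delta$-value over the whole chain, which splits $A$ into a left block and a right block; recursing, one sees that any set of $N$ strings sorted lexicographically can be organized by a ``split tree'' of depth at most... hmm, actually the clean statement is: among any $m^{t}+1$ strings there is an increasing subchain of length $t+1$ with strictly increasing consecutive $\delta$-values — one proves this by induction on $t$, partitioning $A$ according to the value $\delta(\beta, \beta')$ of the first consecutive pair, of which there are at most $N$ distinct possibilities along the sorted order, and observing that within a block where the minimal consecutive $\delta$ equals some fixed $j$, removing the positions achieving $j$ leaves $\le N$ sub-blocks each of which still sits inside $A$; the largest such sub-block has size $\ge (|A|-1)/N$, and we iterate. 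After $\alpha(G)+1$ iterations starting from $|A| = N^{\alpha(G)}+2$ we are left with a chain of length $\ge 2$, and the $\alpha(G)+1$ recorded $\delta$-values along the way are strictly increasing by construction. Feeding these into the independence-in-$G$ argument yields $\alpha(G)+1 \le \alpha(G)$, the desired contradiction; hence $\alpha(H) \le N^{\alpha(G)} + 1$.
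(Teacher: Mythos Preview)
Your clique bound is correct and identical to the paper's argument.

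For the independence bound, your overall strategy is sound: if from any independent set $A\subseteq\{0,1\}^N$ of size $N^{\alpha(G)}+2$ one can extract a subchain $\beta_1\prec\dots\prec\beta_{\alpha(G)+2}$ whose consecutive $\delta$-values $d_1<\dots<d_{\alpha(G)+1}$ are strictly increasing, then (exactly as in your clique analysis) the triples $\{\beta_i,\beta_j,\beta_{j+1}\}$ force $\{d_i,d_j\}\notin E(G)$, giving an independent set of size $\alpha(G)+1$ in $G$, a contradiction. The purely combinatorial claim you need --- that any $\delta$-sequence of length exceeding $N^{t}$ (with values in $[N]$ and the unique-minimum property) admits $t+1$ contiguous blocks with strictly increasing minima --- is true.

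However, your sketch of that claim does not work as written. The ``partition into $\le N$ sub-blocks, take the largest, iterate'' step is not well-defined: the global minimum of the $\delta$-sequence occurs at a \emph{unique} position, so there are only two blocks, not $N$. If you instead always pass to the larger of the two blocks, the recorded minima do increase, but you only get $\log_2|A|$ of them and, worse, whenever you step \emph{left} the recorded positions move backwards, so they no longer assemble into a subchain with increasing consecutive $\delta$'s (a decreasing-$\delta$ subchain gives no information about $G$, since such triples are non-edges of $H$ automatically). Only ``always step right'' produces a genuine increasing-$\delta$ subchain, and there the block sizes are uncontrolled. So the arithmetic ``$\alpha(G)+1$ iterations from $N^{\alpha(G)}+2$ leaves length $\ge 2$'' is not supported by any argument you have given.

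The correct proof of the combinatorial lemma proceeds by a \emph{two-parameter} induction. Writing $M(t,v)$ for the maximal length of such a $\delta$-sequence with values in a $v$-element set and no increasing block-partition of size $t+1$, splitting at the unique global minimum gives
\[
M(t,v)\le M(t,v-1)+1+M(t-1,v-1),
\]
since the left part has values in a set of size $v-1$, while any $t$-block increasing partition inside the right part extends (by prepending the global minimum) to a $(t+1)$-block one for the whole; induction yields $M(t,N)\le N^{t}$. This is exactly the paper's double induction --- their ``bad subsequence'' just carries the $G$-independence condition along inside the same recursion, whereas you verify it afterwards. So once your sketch is made rigorous, it converges to the paper's proof; the separation of the combinatorial extraction from the $G$-argument is a legitimate alternative presentation, but the substance is the same recursion.
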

	\begin{proof}
		Let us first prove $\omega(H)\leq \omega(G)+1$. 
		Suppose that $\alpha_1\prec\dots\prec \alpha_{t}$ are the vertices of a clique in $H$ for some $t$, and let $\delta_{i}=\delta(\alpha_{i},\alpha_{i+1})$ for $i\in [t-1]$. 
		Then $\delta_1<\dots<\delta_{t-1}$. 
		Also, note that for every $1\leq i<j\leq t-1$, we have that  $\delta(\alpha_i,\alpha_j)=\delta_i$. 
		Therefore, as $\{\alpha_{i},\alpha_{j},\alpha_{j+1}\}\in E(H)$, we deduce that $\{\delta_{i},\delta_{j}\}\in E(G)$. This implies that $\{\delta_1,\dots,\delta_{t-1}\}$ is a clique in $G$, showing $t\leq \omega(G)+1$.
		
		\medskip
		
		For the other inequality, suppose $\alpha_1\prec\dots \prec\alpha_{t}$ are the vertices of an independent set in $H$, and let $\delta_i = \delta(\alpha_i,\alpha_{i+1})$ for $i \in [t-1]$. 
		For $I\subset [t-1]$, the subsequence $(\delta_{i})_{i\in I}$ is said to be {\em bad} if it is strictly increasing, and $\{\delta_{i}, \delta_{j}\} \notin E(G)$ for any distinct $i,j\in I$. 
		Also, if $I$ is an interval, we call the subsequence $(\delta_{i})_{i\in I}$ a \emph{continuous subsequence}. 
		For positive integers $\ell$ and $m$, we will show by double induction on $m$ and $\ell$ that any continuous subsequence, which contains at most~$m$ distinct values and no bad subsequences of length $\ell$, is of length at most $m^{\ell-1}$. 
		When $\ell = 1$, the sequence can have length 0 since any sequence of length 1 is bad.
		When $\ell = 2$, note that any $(\delta_i,\delta_{i+1})$ with $\delta_i < \delta_{i+1}$ is bad by considering $(\alpha_i,\alpha_{i+1},\alpha_{i+2}) \notin H$. Thus, the sequence can have at most $m=m^{\ell-1}$ elements.
		In case $m=1$, this is also trivial as $\delta_{i}\neq\delta_{i+1}$ for any $i\in [t-1]$, so a continuous subsequence with at least two elements contains at least two distinct values.
		
		Now let us assume that $m\geq 2$ and $\ell \ge 3$. Suppose that $\delta_{a}, \dots, \delta_{b}$ is some continuous subsequence, which contains at most $m$ distinct values and no bad subsequence  of length $\ell$. 
		Then there is a unique $i \in \{a, \dots, b\}$ such that $\delta_i$ is the minimum. 
		But then $\delta_a, \dots, \delta_{i-1}$ is a continuous subsequence which contains at most $m-1$ distinct values, and no bad subsequence of length $\ell$, thus has length at most $(m-1)^{\ell-1}$ by our induction hypothesis. 	Moreover, observe that  $\delta(\alpha_i,\alpha_k)=\delta_i$ for every $i< k\leq b$. Hence, as $\{\alpha_{i}, \alpha_{k}, \alpha_{k+1}\} \notin E(H)$ for every $k\in \{i+1,\dots,b\}$, we deduce  that $\{\delta_i,\delta_{k}\} \not\in E(G)$.
		This implies that $\delta_{i+1},\dots, \delta_{b}$ does not contain a bad subsequence of length~$\ell-1$.
		Otherwise, such a subsequence together with $\delta_i$ is a bad subsequence of length $\ell$. Therefore, $\delta_{i+1},\dots, \delta_{b}$ is a continuous subsequence that contains at most $m-1$ distinct values and no bad subsequences of length $\ell-1$, so it has length at most $(m-1)^{\ell-2}$ by our induction hypothesis. In conclusion, the length of the sequence $\delta_a,\dots, \delta_b$ is 
		$$	b-a+1 \le (m-1)^{\ell-1} + 1 + (m-1)^{\ell-2}	\le (m-1)^{\ell-1} + m^{\ell-1} - (m-1)^{\ell-1}	= m^{\ell-1},$$
		completing the induction step. 
		
		Notice that $\delta_1, \dots, \delta_{t-1}$ contains no bad subsequence of length $\alpha(G)+1$, as a bad subsequence induces an independent set in $G$. Also, it has at most $N$ distinct values, so we derive that  
		$$t-1\le N^{\alpha(G)}.$$
	\end{proof}
	
	\subsection{Stepping up algebraically}
	In this section, we adapt the methods of \cite{CFPSS14} and \cite{EMRS14} to show that stepping up can be done semi-algebraically. More precisely, we show that if $G$ is semi-algebraic of bounded complexity, then the step-up of $G$ is also semi-algebraic of bounded complexity.
	
	Given a finite set $P\subset \mathbb{R}^{d}$ and a sequence of polynomials $\mathbf{f}=(f_1,\dots,f_m)$ with $f_i:\mathbb{R}^{d}\times \mathbb{R}^{d}\mapsto \mathbb{R}$ for $i\in [m]$, we say that $(P,\mathbf{f})$ is \emph{robust} if $f_{i}(p,q)\neq 0$ for every $i\in [m]$ and $p,q\in P$. Furthermore, say that a graph $G$ is \emph{robustly} semi-algebraic of complexity $\mathbf{t}=(d,D,m)$, if there exists a robust witness for $(G,\mathbf{t})$. 
	\begin{lemma}\label{lem:algebraic-stepping-up_robust}
		For every $\mathbf{t}\in \mathbb{N}^{3}$ there exists $\mathbf{t}'\in\mathbb{N}^{3}$ such that the following holds. Let $G$ be a robustly semi-algebraic graph of complexity $\mathbf{t}$. Then the step-up of $G$ is semi-algebraic of complexity $\mathbf{t}'$.
		
	\end{lemma}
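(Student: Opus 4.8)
The plan is to produce an explicit witness for the step-up $H$ of $G$, by the familiar device of encoding $\{0,1\}^{N}$ on the real line while simultaneously dragging along the coordinates of a robust witness for $G$ at the right scale. Fix a robust witness $(Q,\mathbf{g})$ for $(G,\mathbf{t})$, with $Q=\{q_1,\dots,q_N\}\subset\mathbb{R}^{d}$, $\mathbf{g}=(g_1,\dots,g_m)$, and let $\Phi_G\colon\{+,-,0\}^{m}\to\{\True,\False\}$ be the sign function coming from this witness. Introduce a large real parameter $M$, to be fixed only at the very end as a function of $N$, of $\max_{i\in[N],\,k\in[d]}|q_i(k)|$, and of the polynomials $g_j$; crucially $M$ is \emph{not} part of $\mathbf{t}'$. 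Set $\varepsilon=1/(M-1)$ and map each $\alpha\in\{0,1\}^{N}$ to
$$P(\alpha)=\bigl(x(\alpha),\,y_1(\alpha),\dots,y_d(\alpha)\bigr)\in\mathbb{R}^{d+1},\qquad x(\alpha)=\sum_{i=1}^{N}\alpha(i)M^{-i},\quad y_k(\alpha)=\sum_{i=1}^{N}\alpha(i)q_i(k)M^{-i}.$$
This is injective, and $\alpha\prec\beta\iff x(\alpha)<x(\beta)$, so enumerating $V(H)=\{0,1\}^{N}$ in lexicographic order, a triple presented with increasing indices always corresponds to some $\alpha\prec\beta\prec\gamma$.

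The single elementary estimate is a geometric-tail bound: if $\alpha\prec\beta$ first differ at coordinate $\delta=\delta(\alpha,\beta)$, then $x(\beta)-x(\alpha)=M^{-\delta}(1+\theta)$ and $y_k(\beta)-y_k(\alpha)=M^{-\delta}(q_\delta(k)+\theta_k)$ with $|\theta|\le\varepsilon$ and $|\theta_k|\le\varepsilon\max_i|q_i(k)|$, since all coordinates $>\delta$ contribute only a geometric tail. Hence $\bigl(y_k(\beta)-y_k(\alpha)\bigr)/\bigl(x(\beta)-x(\alpha)\bigr)=q_\delta(k)+O(\varepsilon)$: from $P(\alpha)$ and $P(\beta)$ one recovers $q_{\delta(\alpha,\beta)}$ up to error $O(\varepsilon)$. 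For a triple $\alpha\prec\beta\prec\gamma$ write $\delta_1=\delta(\alpha,\beta)$, $\delta_2=\delta(\beta,\gamma)$, and let $\tilde q_1,\tilde q_2\in\mathbb{R}^{d}$ be the approximations of $q_{\delta_1},q_{\delta_2}$ obtained in this way from the pairs $(\alpha,\beta)$ and $(\beta,\gamma)$.

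The witness for $H$ will use $m+1$ polynomials on a triple of points $u=(u_0,\dots,u_d)$, $v=(v_0,\dots,v_d)$, $w=(w_0,\dots,w_d)$: the linear polynomial $h(u,v,w)=2v_0-u_0-w_0=(v_0-u_0)-(w_0-v_0)$, and, for each $j\in[m]$,
$$\hat g_j(u,v,w)=g_j\!\left(\Bigl(\tfrac{v_k-u_k}{v_0-u_0}\Bigr)_{k\in[d]},\ \Bigl(\tfrac{w_k-v_k}{w_0-v_0}\Bigr)_{k\in[d]}\right)\cdot(v_0-u_0)^{D}(w_0-v_0)^{D},$$
which, after clearing denominators, is a genuine polynomial of degree at most $2D$: since $g_j$ has degree $\le D$, every monomial of $g_j$ involves its first $d$ variables to total degree $\le D$ and its last $d$ to total degree $\le D$, so no negative powers of $v_0-u_0$ or $w_0-v_0$ survive. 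Thus $\mathbf{t}'=(d+1,\max\{1,2D\},m+1)$. The edge function $\Phi_H$ is: if $\sign(h)<0$ output $\False$; if $\sign(h)>0$ output $\Phi_G\bigl(\sign(\hat g_1),\dots,\sign(\hat g_m)\bigr)$; the value $\sign(h)=0$ never occurs.

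For correctness, fix a triple $\alpha\prec\beta\prec\gamma$; by the first listed property of $\delta$ we have $\delta_1\ne\delta_2$. The tail bound makes $h$ at $(P(\alpha),P(\beta),P(\gamma))$ equal to $M^{-\delta_1}(1+\theta)-M^{-\delta_2}(1+\theta')$ with $|\theta|,|\theta'|\le\varepsilon$, so for $M\ge4$ its sign is that of $M^{-\delta_1}-M^{-\delta_2}$; hence $\sign(h)>0\iff\delta_1<\delta_2$, and when $\delta_1>\delta_2$ the step-up contains no such edge and $\Phi_H$ correctly answers $\False$. When $\delta_1<\delta_2$, since $v_0-u_0=x(\beta)-x(\alpha)>0$ and $w_0-v_0=x(\gamma)-x(\beta)>0$, we get $\sign(\hat g_j)=\sign\bigl(g_j(\tilde q_1,\tilde q_2)\bigr)$; and because each $g_j$ is a fixed polynomial, hence Lipschitz on the bounded box containing all the relevant arguments, while robustness gives $|g_j(q_i,q_{i'})|\ge\eta$ for some $\eta>0$ and all $i,i',j$, taking $M$ large enough that the $O(\varepsilon)$ perturbation of the arguments changes each $g_j$ by less than $\eta$ forces $\sign(\hat g_j)=\sign\bigl(g_j(q_{\delta_1},q_{\delta_2})\bigr)$. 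Then $\Phi_H$ returns $\Phi_G\bigl(\sign(g_1(q_{\delta_1},q_{\delta_2})),\dots,\sign(g_m(q_{\delta_1},q_{\delta_2}))\bigr)$, which is $\True$ exactly when $\{\delta_1,\delta_2\}\in E(G)$, i.e.\ exactly when $\{\alpha,\beta,\gamma\}\in E(H)$. The main obstacle — indeed the only delicate point — is this last step: transporting the clean ``$q_{\delta(\alpha,\beta)}$ up to $O(\varepsilon)$'' estimate through the \emph{non-linear} polynomials $g_j$, and using robustness to guarantee that the resulting signs are the correct ones; the surrounding bookkeeping (clearing denominators to stay polynomial, and checking those denominators are positive on the point set) is routine.
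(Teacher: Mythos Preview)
Your proof is correct and takes essentially the same approach as the paper's: encode $\{0,1\}^N$ in $\mathbb{R}^{d+1}$ via geometric sums so that coordinate ratios recover $q_{\delta(\alpha,\beta)}$ up to $O(\varepsilon)$, then invoke robustness to guarantee that the signs of the defining polynomials survive this perturbation. Your version is in fact slightly more careful in two places---you add the linear polynomial $h$ to detect the condition $\delta(\alpha,\beta)<\delta(\beta,\gamma)$ (which the paper's $m$-polynomial witness does not explicitly address), and you clear denominators with exponent $D$ rather than the paper's $2D$, yielding degree bound $2D$ rather than the paper's stated $5D$.
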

	
	\begin{proof}
		Let $\mathbf{t}=(d,D,m)$, and let $P=\{p_1,\dots,p_{N}\}$ and $\mathbf{f}=(f_1,\dots,f_{m})$ such that $(P,\mathbf{f})$ is a robust witness for $(G,\mathbf{t})$. Choose a parameter $0<\varepsilon<\frac{1}{2}$, which we specify later. For $\alpha \in \{0,1\}^N$, set
		$$q_\alpha:= \sum_{i=1}^N \alpha(i)\varepsilon^i\cdot (1,p_i) \in \mathbb{R}^{d+1},$$
		where $(1,p_{i})$ denotes the $(d+1)$-dimensional vector with first coordinate $1$, followed by the coordinates of $p_i$. Notably, the first coordinate of $q_{\alpha}$ is $\sum_{i=1}^N \alpha(i)\varepsilon^i$. As is easy to check, the lexicographic ordering of $\{0,1\}^N$ agrees with the natural ordering of $q_\alpha(1)$, and we simply take $\prec'$ to be this ordering.
		
		Next, we define a mapping $\sdelta:\mathbb{R}^{d+1}\times\mathbb{R}^{d+1}\to \mathbb{R}^d$, which will play the role of the $\delta$ from the definition of step-up. For points $x,y\in \mathbb{R}^{d+1}$, we set
		$$	\sdelta(x,y):= \left(\frac{x(2)-y(2)}{x(1)-y(1)},\frac{x(3)-y(3)}{x(1)-y(1)},\ldots,
		\frac{x(d+1)-y(d+1)}{x(1)-y(1)}\right) \in\mathbb{R}^d.$$
		Note that $\sdelta(x,y)$ is undefined when $x(1)=y(1)$, but this will cause no issues later. 
		An important observation, which is also easy to verify, is that for every $\alpha,\beta\in \{0,1\}^{N}$, $\alpha\neq \beta$, we have
		\begin{equation} \label{eq:limit-point}
			\lim_{\varepsilon\to 0} \sdelta(q_\alpha,q_{\beta})= p_{\delta(\alpha,\beta)}.
		\end{equation}
		By the robustness of $(P,\mathbf{f})$, there exists $\eta>0$ such that for any $i\in [m]$, and $p,q\in P$, if $p',q'\in\mathbb{R}^{d}$ are such that $||p'-p||<\eta$ and $||q'-q||<\eta$, then $f_{i}(p,q)$ has the same sign as $f_{i}(p',q')$. For $i\in [m]$, define the function $g_i:(\mathbb{R}^{d+1})^{3}\mapsto \mathbb{R}$ such that $g_i(x,y,z)=f_{i}(\sdelta(x,y),\sdelta(y,z))$. Assuming $\varepsilon$ is sufficiently small, we have 
		$$\mbox{sign}(g_i(q_{\alpha},q_{\beta},q_{\gamma}))=\mbox{sign}(f_i(p_{\delta(\alpha,\beta)},p_{\delta(\beta,\gamma)}))$$ for $\alpha,\beta,\gamma\in\{0,1\}^{N}$. Note that $g_i$ might not be a polynomial, but $$h_i(x,y,z)=(x(1)-y(1))^{2D}(y(1)-z(1))^{2D}g_{i}(x,y,z)$$ is a polynomial of degree at most $5D$. Moreover, $h_i$ has the same sign as $g_i$ whenever $x(1),y(1),z(1)$ are distinct.
		
		Let $H$ be the step-up of $G$, that is, $H$ is the 3-uniform hypergraph on vertex set $\{0,1\}^{N}$, in which $\{\alpha,\beta,\gamma\} \in E(H)$ for $\alpha\prec \beta\prec \gamma$ if and only if $\delta(\alpha,\beta) < \delta(\beta,\gamma)$ and  $\{\delta(\alpha,\beta),\delta(\beta,\gamma)\}\in E(G).$ Then, writing $Q=\{q_{\alpha}:\alpha\in\{0,1\}^{N}\}$, $\mathbf{h}=(h_1,\dots,h_m)$, and $\mathbf{t'}=(d+1,5D,m)$, $(Q,\mathbf{h})$ is a witness for $(H,\mathbf{t}')$.
	\end{proof} 
	
	Finally, we note that the condition of being robust can be omitted. Indeed, suppose that $(G,(d,D,m))$ is witnessed by $(P,\mathbf{f})$. Choose $\varepsilon>0$ sufficiently small, and let $\mathbf{f}'=(f_{i}+\varepsilon,f_{i}-\varepsilon)_{i=1}^{m}$. Then $(P,\mathbf{f}')$ is a robust witness for $(G,(d,D,2m))$. 
	This is true because if $f_{i}(x,y)\neq 0$ for $x,y\in P$, then assuming $\varepsilon$ is sufficiently small, we have $\mbox{sign}(f_{i}(x,y)+\varepsilon)=\mbox{sign}(f_{i}(x,y))$. Also, the condition $f_{i}(x,y)=0$ can be replaced by $f_{i}(x,y)+\varepsilon>0$ and $f_{i}(x,y)-\varepsilon<0$. Therefore, we get the following.
	
	\begin{lemma}\label{lem:algebraic-stepping-up}
		For every $\mathbf{t}\in \mathbb{N}^{3}$ there exists $\mathbf{t}'\in\mathbb{N}^{3}$ such that the following holds. Let $G$ be a semi-algebraic graph of complexity $\mathbf{t}$. Then the step-up of $G$ is semi-algebraic of complexity $\mathbf{t}'$.
	\end{lemma}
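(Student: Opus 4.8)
The plan is to bootstrap off Lemma~\ref{lem:algebraic-stepping-up_robust} by showing that the robustness hypothesis there comes for free: any semi-algebraic graph of complexity $(d,D,m)$ is also \emph{robustly} semi-algebraic of complexity $(d,D,2m)$. So first I would take a witness $(P,\mathbf{f})$ for $(G,(d,D,m))$ with $P=\{p_1,\dots,p_N\}\subset\mathbb{R}^d$ and $\mathbf{f}=(f_1,\dots,f_m)$, and observe that since $P$ is finite, the set of nonzero values $|f_i(p,q)|$ over $i\in[m]$ and $p,q\in P$ has a positive minimum (if this set is empty, then $(P,\mathbf{f})$ is already robust and there is nothing to do). Then I would fix $\varepsilon>0$ smaller than this minimum.

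Next I would replace $\mathbf{f}$ by the length-$2m$ tuple $\mathbf{f}'=(f_1+\varepsilon,f_1-\varepsilon,\dots,f_m+\varepsilon,f_m-\varepsilon)$, whose entries are still polynomials of degree at most $D$ on $(\mathbb{R}^d)^2$, and check two things. Robustness: for $p,q\in P$, if $f_i(p,q)\neq 0$ then $|f_i(p,q)|>\varepsilon$ forces $f_i(p,q)\pm\varepsilon\neq 0$, and if $f_i(p,q)=0$ then $f_i(p,q)\pm\varepsilon=\pm\varepsilon\neq 0$; hence no entry of $\mathbf{f}'$ vanishes on $P\times P$. Witness property: from the sign-pattern of $\mathbf{f}'$ at $(p,q)$ one reads off $\mathrm{sign}(f_i(p,q))$, because the choice of $\varepsilon$ guarantees $\mathrm{sign}(f_i(p,q)\pm\varepsilon)=\mathrm{sign}(f_i(p,q))$ when $f_i(p,q)\neq 0$, while $f_i(p,q)=0$ corresponds exactly to the pattern $f_i(p,q)+\varepsilon>0$ and $f_i(p,q)-\varepsilon<0$; composing this recovery map with the function $\Phi$ of the original witness gives a function $\Phi':\{+,-,0\}^{2m}\to\{\True,\False\}$ realizing $E(G)$. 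Thus $(P,\mathbf{f}')$ is a robust witness for $(G,(d,D,2m))$.

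Finally I would invoke Lemma~\ref{lem:algebraic-stepping-up_robust} on $G$ with the complexity $(d,D,2m)$ in place of $\mathbf{t}$: it yields some $\mathbf{t}'\in\mathbb{N}^3$ (concretely $(d+1,5D,2m)$, by inspecting its proof) such that the step-up of $G$ is semi-algebraic of complexity $\mathbf{t}'$, and this $\mathbf{t}'$ depends only on $(d,D,m)$, as required.

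I do not expect a genuine obstacle here; the one subtlety worth flagging is that $\varepsilon$ must be chosen \emph{after} the point configuration $P$ is fixed — it depends on $P$, not merely on $\mathbf{t}$ — but that is legitimate, since being "robustly semi-algebraic of complexity $\mathbf{t}$" only requires the existence of \emph{some} robust witness realizing $G$. The sole cost of the reduction is the harmless doubling $m\mapsto 2m$, which does not affect the statement.
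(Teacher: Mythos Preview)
Your proposal is correct and essentially identical to the paper's own argument: the paper also perturbs each $f_i$ to the pair $(f_i+\varepsilon,\,f_i-\varepsilon)$ for sufficiently small $\varepsilon>0$ to obtain a robust witness of complexity $(d,D,2m)$, and then invokes Lemma~\ref{lem:algebraic-stepping-up_robust}. Your additional remarks (the explicit $\mathbf{t}'=(d+1,5D,2m)$ and the observation that $\varepsilon$ depends on $P$) are accurate and go slightly beyond what the paper spells out.
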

	
	\subsection{Proof of Theorem \ref{thm:construction}}\label{subsec:base-G}
	In order to finish the proof, we just need a triangle-free semi-algebraic graph $G$ of bounded complexity with small independent sets. Such constructions are already available, let us briefly describe one, provided by \cite{ST21}.
	
	Let $P \subset \mathbb{R}^2$ be a set of $m$ points and $\calL$ be a set of $m$ lines in $\mathbb{R}^2$ with $\Omega(m^{4/3})$ incidences. Such configurations exist and are extremal for the celebrated Szemer\'edi-Trotter theorem \cite{SzT83}. Let $\prec$ be an arbitrary total ordering on $P\cup \calL$, and let $G$ be the graph with vertex set $$V(G):= \{(p, \ell): p \in P, \ell \in \calL, p \in \ell\},$$ in which $(p,\ell)$ and  $(p',\ell')$ are  joined by an edge if $p\prec p', \ell\prec\ell'$, and $p\in\ell'$. It is clear that $G$ is semi-algebraic of constant complexity. Moreover, $G$ satisfies the following properties.
	
	\begin{lemma}\label{lemma:graph_constr}\cite{ST21}
		$\omega(G)=2$, $\alpha(G)\leq 2m$, and $|V(G)|\geq c_0m^{4/3}$ for some constant $c_0>0$.
	\end{lemma}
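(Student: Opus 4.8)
The plan is to establish the three claims of Lemma~\ref{lemma:graph_constr} in turn, the last one being the crux. The bound $|V(G)|\ge c_0 m^{4/3}$ is immediate from the construction: $V(G)$ is exactly the set of point--line incidences of $(P,\calL)$, and $(P,\calL)$ was chosen to be an extremal example for the Szemer\'edi--Trotter theorem, hence has $\Omega(m^{4/3})$ incidences.

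For $\omega(G)\le 2$, suppose $(p_1,\ell_1),(p_2,\ell_2),(p_3,\ell_3)$ formed a triangle. Two flags that share a point, or share a line, are never adjacent, since an edge requires a strict inequality in \emph{both} coordinates; hence the three points are distinct and the three lines are distinct, and after relabelling we may assume $p_1\prec p_2\prec p_3$. The three edges then force $\ell_1\prec\ell_2\prec\ell_3$ together with $p_1\in\ell_2$, $p_1\in\ell_3$ and $p_2\in\ell_3$. Since also $p_2\in\ell_2$, both $p_1$ and $p_2$ lie on $\ell_2\cap\ell_3$; as $\ell_2\ne\ell_3$, this forces $p_1=p_2$, a contradiction. (That $\omega(G)\ge 2$, i.e.\ that $G$ has an edge, also follows easily: if $G$ were edgeless then for every incidence $(p,\ell)$ in which $\ell$ is not the $\prec$-smallest line through $p$, the point $p$ must be the $\prec$-largest point on $\ell$; there are at most $m$ incidences of this kind and at most $m$ of the remaining kind, contradicting $|V(G)|\ge c_0m^{4/3}$.)

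The main point is $\alpha(G)\le 2m$. Let $S$ be an independent set. For each point $p$ occurring in a flag of $S$, let $\ell_{\min}(p)$ be the $\prec$-smallest line with $(p,\ell_{\min}(p))\in S$, and define $\psi\colon S\to P\cup\calL$ by $\psi(p,\ell)=p$ if $\ell=\ell_{\min}(p)$ and $\psi(p,\ell)=\ell$ otherwise. On the flags sent into $P$, the map $\psi$ is injective, since for each $p$ the only such flag is $(p,\ell_{\min}(p))$. On the flags sent into $\calL$, suppose $(p_1,\ell)$ and $(p_2,\ell)$ are two of them with $p_1\prec p_2$; since $\ell\ne\ell_{\min}(p_1)$, there is a line $\ell_1\prec\ell$ with $(p_1,\ell_1)\in S$, and then $p_1\prec p_2$, $\ell_1\prec\ell$, and $p_1\in\ell$ (because $(p_1,\ell)\in S$) show that $(p_1,\ell_1)$ and $(p_2,\ell)$ are adjacent in $G$ --- contradicting the independence of $S$. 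Hence $\psi$ is injective, so $|S|\le|P|+|\calL|=2m$.

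The one place requiring care is the choice of the map $\psi$: it must be arranged so that a collision on the line side produces precisely the incidence $p_1\in\ell$ that creates the forbidden edge $\{(p_1,\ell_1),(p_2,\ell)\}$, which is why each point routes all but its $\prec$-smallest line to the line side. Beyond this, the only bookkeeping is that in the rule defining an edge the roles of $(p,\ell)$ and $(p',\ell')$ are assigned according to which flag has the $\prec$-smaller point; this is harmless here but should be stated once.
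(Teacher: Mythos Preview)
Your proof is correct. The paper does not include its own proof of this lemma --- it is quoted from \cite{ST21} and stated without proof --- so there is no in-paper argument to compare against; your argument is the standard one, and in particular the injection $\psi\colon S\to P\cup\calL$ you construct is exactly the device used in \cite{ST21} to bound the independence number.
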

	
	Now we are ready to prove Theorem \ref{thm:construction}. 
	
	\begin{proof}[Proof of Theorem \ref{thm:construction}]
	Let $n$ be a positive integer, and let $m$ be the maximal positive integer satisfying $n\geq (c_0m^{4/3})^{2m}+2$, where $c_0$ is the constant given by Lemma \ref{lemma:graph_constr}. 
	Then $m=\Omega(\log n/\log\log n)$.  Lemma \ref{lemma:graph_constr} tells us that there exists a semi-algebraic graph $G$ of complexity~$\mathbf{t}$ on $N=\lfloor c_0m ^{4/3}\rfloor$ vertices containing no triangles, and satisfying $\alpha(G)\leq 2m$. Here, $\mathbf{t}$ is some constant. Let $H$ be the step-up of $G$. Then $H$ has $2^{N}=n^{\Omega((\log n)^{1/3}/(\log\log n)^{4/3})}$ vertices, and it satisfies $\omega(H)=3$ and $\alpha(H)\leq N^{2m}+1< n  $ by Lemma \ref{lemma:hom}. Finally, $H$ is semi-algebraic of complexity $\mathbf{t}'$ for some constant $\mathbf{t}'$ by Lemma \ref{lem:algebraic-stepping-up}, finishing the proof.
	\end{proof}
	
	\section{Semi-linear hypergraphs}\label{sect:semilinear}
	
	In this section, we prove Theorems \ref{thm:linear}, \ref{thm:lin_constr} and \ref{thm:multicolor}. As most of this section is devoted to the proof of Theorem \ref{thm:linear}, let us give a brief outline of it. 
	
	First of all, we show that every  $r$-uniform semi-linear hypergraph of constant complexity is the Boolean combination of a constant number of very simple-looking semi-linear hypergraphs, which we call primitive semi-linear. A primitive semi-linear hypergraph $H$ on $N$ vertices can be defined with help of a single $r\times N$ real matrix $P$ as follows. For $1\leq q_1<\dots<q_r\leq N$, $\{q_1,\dots,q_r\}$ is an edge of $H$ if and only if $\sum_{i=1}^{r}P(i,q_i)<0$.
	
	We want to show that if $H$ is semi-linear of complexity $(d,m)$ on $N$ vertices, then $H$ contains either a clique or an independent set of size at least $(\log N)^{\Omega_{r,d,m}(1)}$. Let  $P_1,\dots,P_k$ be $r\times N$ matrices, let $H_{\ell}$ be the primitive semi-linear hypergraph associated with $P_{\ell}$ for $\ell\in [k]$, and suppose that $H$ is the Boolean combination of $H_1,\dots,H_k$. First, we show that one can simultaneously select the same subset of $N'=(\log N)^{\Omega_{r,k}(1)}$ columns in the matrices $P_1,\dots,P_k$ such that the resulting submatrices $P_1',\dots,P_k'$ have the following property. For each $\ell\in [k]$ and $i\in [r]$, there exists some $s=s(\ell,i)$ such that shifted row sequence $P'_{\ell}(i,1)+s,\dots,P_{\ell}'(i,N')+s$ has exponential behavior. Let $H_{\ell}'$ be the induced subhypergraph of $H_{\ell}$ corresponding to the submatrix $P_{\ell}'$ for~$\ell\in [k]$. Now given $1\leq q_1<\dots<q_r\leq N'$, whether $\{q_1,\dots,q_r\}$ is an edge of~$H_{\ell}'$ depends essentially on which $i\in [r]$ achieves the maximum of the set $\{|P_{\ell}'(i,q_i)+s|:i\in [r]\}$, by the exponential growth. In such a setting, we can show that there exists some $C\subset [N']$ such that $|C|=(N')^{\Omega_{r,k}(1)}$ and each~$H'_{\ell}$ is either a clique or an independent set on $C$. But then $C$ corresponds to either a clique or an independent set of size $(N')^{\Omega_{r,k}(1)}=(\log N)^{\Omega_{r,k}(1)}$ in $H$, finishing the proof.
	
	\subsection{Decomposing semi-linear hypergraphs}
	A hypergraph $H$ is \emph{vertex-ordered} if there is a fixed enumeration $v_1,\dots,v_{n}$ of its vertices. 
	In what follows, all hypergraphs considered are vertex-ordered, so for simplicity, we choose not to write it; also, subhypergraphs always inherit the vertex-ordering. 
	
	\begin{definition}
		An $r$-uniform hypergraph $H$ with vertex enumeration $v_1,\dots,v_{N}$ is \emph{primitive semi-linear}, if there exists a matrix $P\in \mathbb{R}^{r\times N}$ such that for $1\leq q_1<\dots<q_{r}\leq N$, $\{v_{q_1},\dots,v_{q_r}\}$ is an edge of $H$ if and only if $$\sum_{i=1}^{r}P(i,q_i)<0.$$
		In this case, say that $P$ is a \emph{witness} for $H$.
	\end{definition}
	
	Note that a matrix $P$ is a witness for a unique hypergraph $H$ (up to isomorphism). Also, by adding some positive noise, we can always assume that no row of $P$ contains repeated elements.
	
	\medskip
	
	First, we show that if $H$ is semi-linear of bounded complexity, then $H$ is the Boolean combination of a bounded number of primitive semi-linear hypergraphs.
	Say that a hypergraph $H$ has \emph{primitive complexity $k$} if it is the Boolean combination of $k$ primitive semi-linear hypergraphs. If $H$ is the Boolean combination of primitive semi-linear hypergraphs $H_{1},\dots,H_{k}$ with witnesses $P_1,\dots,P_{k}$, respectively, define the matrix $M\in \mathbb{R}^{rk\times N}$ such that $$M((j-1)r+i,q)=P_{j}(i,q)$$ for $j\in [k], i\in [r], q\in [N]$. That is, $M$ is the matrix we get by stacking $P_1,\dots,P_{k}$ on each other. Say that $M$ is a \emph{witness} for $H$. Note that a matrix $M$ might be a witness of at most $2^{k}$ hypergraphs~$H$, considering all $2^k$ possible Boolean functions.
	
	\begin{lemma}\label{lemma:primitive}
		For positive integers $r,d,m$, every $r$-uniform semi-linear hypergraph of complexity $(d,m)$ has primitive complexity $k=2m$.
	\end{lemma}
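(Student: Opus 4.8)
The plan is to reduce to a single defining polynomial and then recognise the resulting hypergraph as primitive semi-linear by reading a witness matrix directly off the linear functional.

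First I would use the observation already recorded in the Preliminaries. Suppose $(\{p_1,\dots,p_N\},\mathbf f)$ with $\mathbf f=(f_1,\dots,f_m)$ and associated Boolean function $\Phi$ witnesses that $H$ is semi-linear of complexity $(d,m)$, where $v_q\mapsto p_q\in\mathbb R^d$. For each $\ell\in[m]$ let $A_\ell$ (resp.\ $B_\ell$) be the $r$-uniform hypergraph on the same ordered vertex set whose edges are exactly the $r$-sets $\{v_{q_1},\dots,v_{q_r}\}$ with $q_1<\dots<q_r$ for which $f_\ell(p_{q_1},\dots,p_{q_r})>0$ (resp.\ $<0$). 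For a fixed $r$-set $e$, the sign of $f_\ell$ on $e$ is $+$, $-$, or $0$ according to whether $e\in A_\ell$, $e\in B_\ell$, or neither, and these three cases are mutually exclusive; hence $\mathrm{sign}(f_\ell)$ on $e$ is determined by the pair of indicators $\mathbf 1[e\in A_\ell],\mathbf 1[e\in B_\ell]$. Consequently $\Phi(\mathrm{sign}(f_1),\dots,\mathrm{sign}(f_m))$ is a Boolean function of the $2m$ indicators $\mathbf 1[e\in A_1],\mathbf 1[e\in B_1],\dots,\mathbf 1[e\in A_m],\mathbf 1[e\in B_m]$, i.e.\ $H$ is the Boolean combination of $A_1,B_1,\dots,A_m,B_m$. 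It then suffices to show that each of these $2m$ hypergraphs is primitive semi-linear.

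Each $A_\ell$, $B_\ell$ is of the form ``the edges are the $r$-sets on which a fixed linear functional $g\colon(\mathbb R^d)^r\to\mathbb R$ is negative'' (take $g=f_\ell$ for $B_\ell$ and $g=-f_\ell$ for $A_\ell$). Using $\deg g\le 1$, write $g(x_1,\dots,x_r)=c+\sum_{i=1}^r\langle a_i,x_i\rangle$ with $c\in\mathbb R$ and $a_i\in\mathbb R^d$, and define $\tilde P\in\mathbb R^{r\times N}$ by $\tilde P(1,q)=c+\langle a_1,p_q\rangle$ and $\tilde P(i,q)=\langle a_i,p_q\rangle$ for $2\le i\le r$. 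The point of the row indexing in the definition of a primitive semi-linear hypergraph is that the $i$-th row records the contribution of the $i$-th smallest vertex of an edge, so that for $q_1<\dots<q_r$ one has $g(p_{q_1},\dots,p_{q_r})=\sum_{i=1}^r\tilde P(i,q_i)$; thus $\{v_{q_1},\dots,v_{q_r}\}$ is an edge exactly when $\sum_{i=1}^r\tilde P(i,q_i)<0$, so $\tilde P$ is a witness that the hypergraph is primitive semi-linear. Combining the two steps, $H$ has primitive complexity at most $2m$, as claimed.

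There is no substantial obstacle here; the argument is essentially bookkeeping. The only points requiring care are (i) checking that the three sign cases for each $f_\ell$ are mutually exclusive, so that the decomposition into $2m$ ``one-sided'' hypergraphs is clean, and (ii) matching the $i$-th argument slot of $g$ to the $i$-th row of $\tilde P$ and absorbing the affine constant $c$ into the first row. To get the count $k=2m$ exactly as stated, note that any hypergraph arising as some $A_\ell$ or $B_\ell$ is genuinely of the ``$g<0$'' form regardless of whether it happens to be empty or complete, so no special-casing is needed.
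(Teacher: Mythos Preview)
Your proof is correct and follows essentially the same route as the paper: decompose the sign of each $f_\ell$ into the two strict-inequality events, yielding $2m$ hypergraphs each defined by a single linear inequality $g<0$, and then read off a witness matrix from the coefficients of $g$. The only cosmetic difference is that the paper distributes the constant term as $b_\ell/r$ across all $r$ rows whereas you put the whole constant $c$ into the first row; both clearly give $\sum_i \tilde P(i,q_i)=g(p_{q_1},\dots,p_{q_r})$.
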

	\begin{proof}
		Suppose that $H$ is an $r$-uniform semi-linear hypergraph of complexity $(d,m)$.
		Let $P=\{p_1,\cdots, p_N\} \subset \mathbb{R}^d$ and $\mathbf{f}=(f_1,\cdots,f_m)$ be a witness for $(H, (d,1,m))$. Also, let $\Phi:\{+,-,0\}^m\mapsto\{\mbox{True},\mbox{False}\}$ be a Boolean function such that given $v_{i_1}<\cdots<v_{i_r}$, $\{v_{i_1},\dots,v_{i_r}\}$ is an edge of $H$ if and only if 
		$$\Phi(\mbox{sign}(f_1(\mathbf{p})),\dots, \mbox{sign}(f_m(\mathbf{p}))) = \mbox{True}.$$
		Here, we write $\mathbf{p}=(p_{i_1},\cdots p_{i_r})$ for simplicity.
		Since $\sign(f_i(\mathbf{p}))=0$ is equivalent to $\sign(f_i(\mathbf{p})) \notin \{+,-\}$, we can find a Boolean function $\Phi':\{\mbox{True},\mbox{False}\}^{2m}\mapsto\{\mbox{True},\mbox{False}\}$ such that 
		\begin{align*}
			&\Phi(\mbox{sign}(f_1(\mathbf{p})),\dots, \mbox{sign}(f_m(\mathbf{p})))\\
			&=\Phi'\big(f_1(\mathbf{p})<0, -f_1(\mathbf{p})<0, \cdots, f_m(\mathbf{p})<0, -f_m(\mathbf{p})<0\big).
		\end{align*}
		For $j\in[m]$, let $g_{2j-1}=f_{j}$ and $g_{2j}=-f_{j}$. For $\ell\in [2m]$, let $H_{\ell}$ be the semi-linear hypergraph on vertex set $V(H)$ for which $\{v_{i_1},\dots,v_{i_r}\}$ is an edge if $g_{\ell}(p_{i_1},\dots,p_{i_r}) < 0$. Then clearly $H$ is the Boolean combination of $H_1,\dots,H_{2m}$, so it is sufficient to show that $H_{\ell}$ is primitive semi-linear for $\ell\in [2m]$.
		
		As $g_{\ell}$ is linear, we can write $g_{\ell}(x_1, \cdots, x_r) = \sum_{i=1}^r \langle a_{\ell,i}, x_i\rangle + b_{\ell}$ with suitable vectors $a_{\ell,1},\dots,a_{\ell,r}\in\mathbb{R}^{d}$ and $b_{\ell}\in\mathbb{R}$.
		Define the matrix $P_{\ell} \in \mathbb{R}^{r\times N}$ as $P_{\ell}(i,j) = \langle a_{\ell,i},p_j\rangle + \frac{b_\ell}{r}$.
		Given $1\leq i_1<\dots<i_{r}\leq N$ and $\mathbf{p}=(p_{i_1},\dots,p_{i_r})$, we have $\sum_{a=1}^r P_{\ell}(a,i_{a}) = g_{\ell}(\mathbf{p})$. 
		Thus, $P_{\ell}$ is a witness for $H_{\ell}$, and consequently $H_{\ell}$ is primitive semi-linear. This completes the proof.
	\end{proof}
	
	\subsection{Streamlining matrices}\label{sec: finding convex matrices}
	
	In this section, we are concerned with matrices. Our goal is to show that in every matrix $M\in\mathbb{R}^{q\times N}$ we can find a large submatrix by removing only columns such that each row has some nice properties. 
	
	\begin{definition}
		A matrix $M\in\mathbb{R}^{q\times N}$ is \emph{row-increasing} if each row of $M$ is strictly monotone increasing. \emph{Row-decreasing} is defined in a similar manner. Furthermore, $M$ is \emph{row-monotone} if each row of $M$ is either strictly monotone increasing or decreasing (we allow both increasing and decreasing rows to appear simultaneously in $M$).
	\end{definition}
	
	The following lemma was originally proved by Burkill and Mirsky \cite{BM73} and Kalmanson \cite{K73} (and then reproved a number of times), but due to its simplicity, we also present its proof here.
	
	\begin{lemma}\label{lemma:monotone}
		Let $M\in\mathbb{R}^{q\times N}$ such that no row contains repeated elements. Then $M$ contains a $q\times N'$ sized row-monotone submatrix $M'$, where $N'=N^{1/2^{q}}$. 
	\end{lemma}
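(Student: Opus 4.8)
The plan is to prove the lemma by induction on the number of rows $q$, using the Erd\H{o}s--Szekeres monotone subsequence theorem as the base case and as the engine of the inductive step.

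For the base case $q=1$, the single row of $M$ is a sequence of $N$ pairwise distinct reals, so by Erd\H{o}s--Szekeres it contains a monotone (increasing or decreasing) subsequence of length at least $\lceil\sqrt{N}\rceil \ge N^{1/2}=N^{1/2^{1}}$; the columns indexing this subsequence give the desired row-monotone submatrix. For the inductive step, suppose the claim holds for $q-1$ rows. Given $M\in\mathbb{R}^{q\times N}$ with no repeated entries in any row, first apply Erd\H{o}s--Szekeres to the first row to obtain a set $S\subseteq[N]$ of column indices with $|S|\ge N^{1/2}$ such that the restriction of the first row to $S$ is strictly monotone. Let $M_1$ be the submatrix of $M$ with columns indexed by $S$; its first row is monotone, and since monotonicity of a sequence is inherited by subsequences, it suffices to further thin the columns while keeping the first row monotone. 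Apply the induction hypothesis to the $(q-1)\times|S|$ matrix obtained from $M_1$ by deleting its first row (which still has no repeated entries in any row): this yields a set $C\subseteq S$ of columns with $|C|\ge |S|^{1/2^{q-1}}\ge (N^{1/2})^{1/2^{q-1}}=N^{1/2^{q}}$ on which rows $2,\dots,q$ are all monotone. Restricting to the columns in $C$, row $1$ remains monotone as well, so the resulting $q\times|C|$ submatrix is row-monotone, completing the induction.

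There is no real obstacle here: the only points to be careful about are that the Erd\H{o}s--Szekeres theorem requires distinct entries (which is exactly the hypothesis that no row has repeated elements, and this hypothesis is preserved when passing to a submatrix obtained by deleting rows and columns), and that the exponent bookkeeping gives precisely $N^{1/2^{q}}$ after $q$ nested applications. Floors and ceilings in the sizes are harmless and are suppressed as stated in the preliminaries.
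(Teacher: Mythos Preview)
Your proof is correct and follows essentially the same approach as the paper: induction on $q$ using the Erd\H{o}s--Szekeres theorem, with the only cosmetic difference that you peel off the first row and then apply the induction hypothesis to the remaining $q-1$ rows, whereas the paper first applies the induction hypothesis to the first $q-1$ rows and then handles the last row via Erd\H{o}s--Szekeres. The arithmetic and the key observation that monotonicity is inherited by subsequences are identical.
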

	
	\begin{proof}
		We prove this by induction on $q$. 
		If $q=1$ the statement follows from the well known Erd\H{o}s-Szekeres theorem \cite{ESz35}, stating that any sequence of $k\ell+1$ real numbers contains either a monotone increasing subsequence of length $k+1$, or a monotone decreasing subsequence of length $\ell+1$. So let us assume that $q\geq 2$. 
		Then $M$ contains a $q\times N_0$ sized submatrix $M_0$ such that $N_0=N^{1/2^{q-1}}$, and each of the first $(q-1)$ rows of $M_0$ is either monotone increasing or decreasing. 
		Again, by Erd\H{o}s-Szekeres theorem, the last row of $M_0$ contains either an increasing or decreasing subsequence of length at least $N_0^{1/2}$. 
		Let $M'$ be the submatrix of $M_0$ whose columns are given by this subsequence, then $M'$ is row-monotone.
	\end{proof}
	
	\textit{Remark.} The bound in Lemma \ref{lemma:monotone} is also the best possible. Indeed, let $Q=2^q$ and $n=N^{1/Q}$, let $v_1,\dots,v_{Q}$ be an enumeration of the vectors $\{-1,1\}^{q}$, and let $a_1,\dots,a_{N}$ be the enumeration of the elements of $[n]^{Q}$ in the lexicographical order. Define $M\in\mathbb{R}^{q\times N}$, whose $j$-th column vector is 
	$$M(.,j)=\sum_{\ell=1}^{Q}(2n)^{Q-\ell}\cdot a_j(\ell)\cdot v_{\ell} .$$
	Then for $j<j'$, the sign vector of $M(.,j')-M(.,j)$ is $v_{\ell}$, where $\ell$ is the first coordinate $a_j$ and $a_j'$ differ in. Hence, if for some sequence $1\leq j_1<\dots<j_m\leq N$, the submatrix induced by the columns $j_1,\dots,j_m$ is row monotone, then there exists $k\in [Q]$ such that any two vectors among $a_{j_1},\dots,a_{j_m}$ differ in the $k$-th coordinate first, showing that $m\leq n$.
	
	\begin{definition}
		A sequence $x_1,\dots,x_{N}$ of real numbers is a \emph{cup} if for every $1\leq i<j<\ell\leq N$, we have $x_{i}+x_{\ell}\geq 2x_{j}$, and say that it is a \emph{cap} if $x_{i}+x_{\ell}\leq 2x_{j}$. Furthermore, a matrix $M\in \mathbb{R}^{q\times N}$ is \emph{cupcap}, if $M$ is row-monotone, and each row of $M$ is either a cup or a cap.
	\end{definition}

	\begin{lemma}\label{lemma:convex}
		For every $q$ there exists $c=c(q)>0$ such that the following holds. Let $M\in \mathbb{R}^{q\times N}$ be a matrix such that no row contains repeated elements. Then $M$ contains a $q\times N'$ sized cupcap submatrix, where $N'=c(\log N)^{1/q}$.
	\end{lemma}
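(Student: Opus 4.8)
The plan is to reduce to row-monotone matrices and then induct on $q$. By Lemma~\ref{lemma:monotone} we may replace $M$ by a $q\times N^{1/2^q}$ row-monotone submatrix with no repeated entries in a row; since negating a row simultaneously swaps increasing/decreasing and cup/cap while leaving ``cupcap'' intact, we may also assume all rows are strictly increasing. As $\log(N^{1/2^q})=\Theta_q(\log N)$, Lemma~\ref{lemma:convex} follows from the statement $(\star_q)$: \emph{there is $c_q>0$ so that any $q$ strictly increasing rows on $n$ columns (no repeats in a row) contain a cupcap submatrix of width at least $c_q(\log n)^{1/q}$.}

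The basic mechanism is a dyadic observation about a single increasing sequence $0=v_1<\dots<v_n$. For $t\ge2$ put $\lambda(t)=\lfloor\log_2 v_t\rfloor$; this is non-decreasing, and if $L$ is the number of distinct values it attains then one can select $\ge\lceil L/2\rceil$ columns whose $\lambda$-values are pairwise at distance $\ge2$. Along such columns the corresponding values $w_1<w_2<\dots$ satisfy $w_{j+1}\ge2w_j$, and since $w_1\ge0$ this yields $w_i+w_\ell\ge2w_j$ for all $i<j<\ell$: the sequence restricted to these columns is a cup. (Examining the values $v_n-v_t$ instead gives caps.) So ``many dyadic levels'' immediately produces a long cup in one row; the content of $(\star_q)$ is to do this for all $q$ rows at once while losing only a $q$-th root each step, reflected in the target recursion $G_q(n)\gtrsim\min_{L}\max\!\big(\tfrac12G_{q-1}(L),\,G_q(n/L)\big)$, whose solution with $G_1(n)=\Theta(\log n)$ is $G_q(n)=\Theta\big((\log n)^{1/q}\big)$.

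Here is the inductive step realizing this recursion, $(\star_0)$ being trivial. Work with row $q$, shifted so its minimum on the current column set $C$ is $0$. First run a cleanup loop: while row $q$ on $C$ is neither a cup nor a cap and its nonzero values all lie in one dyadic interval, delete from $C$ the column where row $q$ is minimal, placing it in a pile $D$. This stops after $\le|C|-3$ steps (three distinct reals always form a cup or a cap), and the inequality $v_{\max}-v_{t+2}<v_{t+2}-v_{t+1}$ extracted at each step shows row $q$ is in fact a cap on $D$. After the loop: if row $q$ is a cup or cap on $C$, then it is so on every subset of $C$ (and of $D$), so applying $(\star_{q-1})$ to rows $1,\dots,q-1$ on the larger of $C,D$ — which has $\ge n/2$ columns — gives a cupcap submatrix that, for $c_q$ small, beats $c_q(\log n)^{1/q}$ comfortably (since $\tfrac1{q-1}>\tfrac1q$). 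Otherwise row $q$ on $C$ has $L\ge2$ distinct dyadic levels; fixing a threshold $T=T(n)$, either $L\ge T$ — pick one column per level, apply $(\star_{q-1})$ to rows $1,\dots,q-1$ to obtain $\ge c_{q-1}(\log L)^{1/(q-1)}$ columns cupcap for those rows, keep every other one in level order so that row $q$ becomes a cup as well, and retain $\ge\tfrac12c_{q-1}(\log L)^{1/(q-1)}$ columns — or $2\le L<T$, in which case some level-block has $\ge n/T$ columns and we recurse on all $q$ rows inside it.

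Finally one tunes $T=T(n)$ so the $L\ge T$ branch already outputs $\ge c_q(\log n)^{1/q}$, forcing $\log T$ to be of order $(\log n)^{(q-1)/q}$; then in the $L<T$ branch the step $n\mapsto n/T$ shrinks $(\log\,\cdot\,)^{1/q}$ only by a lower-order amount. The hard part — and where I expect the real work to be — is making this last point rigorous: one must bound how many times the degenerate $L<T$ pigeonholing step can be chained before either the cup-from-levels alternative ($L\ge T$) or the cap-from-cleanup alternative (case~(i)) takes over, so that the $(\log n)^{1/q}$ estimate does not erode to a constant. The cleanup loop is precisely what excludes the worst pathology — a sequence that stays essentially constant under repeated removal of its minimum, which the loop certifies to be a cap — and the remaining bookkeeping is to control the interaction between the shrinking column count and the shrinking dyadic span of row $q$ in the surviving cases (e.g.\ by tracking that each pigeonholing step strictly lowers the top level of row $q$'s range).
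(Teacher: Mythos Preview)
Your outline has the right shape --- reduce to row-increasing, use dyadic levels in one row to branch into ``many levels $\Rightarrow$ cup in row $q$, recurse on $q-1$ rows'' versus ``few levels $\Rightarrow$ pigeonhole and recurse on all $q$ rows'' --- but the recursion does not close, and neither of your suggested remedies fixes it.

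Suppose every pass lands in the $2\le L<T$ branch. Each pass shrinks the column count by a factor at most $T$, so after roughly $(\log n)/\log T\approx(\log n)^{1/q}$ passes you are down to $O(1)$ columns having output nothing. Your proposed invariant, that ``each pigeonholing step strictly lowers the top level of row $q$'s range'', is true but vacuous here: that top level is $\lfloor\log_2(\max-\min)\rfloor$, a quantity determined by the \emph{real values} in row $q$ and not by $n$, so it can start arbitrarily large and gives no bound on the recursion depth in terms of $n$. The only usable bound on the depth is the column-count one above. One can try to salvage something by collecting the discarded minimum columns into a pile $D'$; this pile is indeed a cap in row $q$, but its size is at most the number of passes, i.e.\ about $(\log n)^{1/q}$, and applying $(\star_{q-1})$ to rows $1,\dots,q-1$ on $D'$ then yields only $c_{q-1}\bigl(\log|D'|\bigr)^{1/(q-1)}=(\log\log n)^{O(1)}$, exponentially short of the target.

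The idea your argument is missing --- and what the paper supplies --- is a \emph{two-parameter} recursion in which pigeonholing makes measurable progress. One defines $f_q(s,t)$ as the least $N$ forcing either a $q\times s$ submatrix whose \emph{every} row is a cap, or a $q\times t$ cupcap submatrix with at least one cup row. Crucially, the pigeonholing is performed on \emph{all $q$ rows simultaneously} (after normalising each row so that column $1$ has value $0$): then column $1$ is a cap-extender for every row at once, so any all-rows-cap of width $s-1$ found inside the pigeonholed interval extends to one of width $s$. Hence each pigeonhole step decrements $s$, giving $f_q(s,t)\le 4q\,f_{q-1}(t,t)\,f_q(s-1,t)$ and thus $f_q(t,t)\le 2^{O_q(t^q)}$, which is the lemma. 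Your single-row pigeonholing cannot support this extension, since the column realising the minimum of row $q$ has no reason to extend caps in rows $1,\dots,q-1$.
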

	
	First, we show that a slightly weaker version of this lemma already follows by using the Ramsey theory of semi-algebraic hypergraphs. More precisely, we use the multicolor version of semi-algebraic Ramsey numbers to show that the exponent $1/q$ can be replaced with some unspecified constant $\delta=\delta(q)>0$.
	
	\begin{lemma}\cite{CFPSS14}\label{lemma:multicolor}
		For every $\mathbf{t}=(d,D,m)$ and positive integer $s$ there exists $\delta>0$ such that the following holds. Let $H$ be the complete $3$-uniform hypegraph on $N$ vertices whose edges are colored with $s$ colors, each colorclass is a semi-algebraic hypergraph of complexity $\mathbf{t}$. Then $H$ contains a monochromatic clique of size at least $(\log N)^{\delta}$. 
	\end{lemma}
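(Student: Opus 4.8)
\textbf{Proof proposal for Lemma \ref{lemma:multicolor}.}

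The plan is to deduce this multicolor statement from the known two-color bound $R_3^{\mathbf{t}}(n) < \tower_2(n^{O(1)}) = 2^{n^{O(1)}}$ of Conlon, Fox, Pach, Sudakov, and Suk \cite{CFPSS14}, using a standard color-splitting (binary search) argument combined with the fact that semi-algebraic complexity is stable under the operations we need. More precisely, suppose the edges of the complete $3$-uniform hypergraph $K_N^{(3)}$ are colored with $s$ colors, each color class $H_i$ being semi-algebraic of complexity $\mathbf{t}$. Partition the color set $[s]$ into two halves $[s] = S_1 \cupdot S_2$ of size roughly $s/2$ each, and let $H_{S_1}$ be the hypergraph whose edges are exactly those with a color in $S_1$. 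Then $H_{S_1}$ is a Boolean combination (namely an OR) of the $|S_1| \le s$ semi-algebraic hypergraphs $H_i$, $i \in S_1$, so $H_{S_1}$ is semi-algebraic of complexity $\mathbf{t}_1 = (d, D, sm)$: one simply takes all the defining polynomials together and adjusts the Boolean function. By the two-color semi-algebraic Ramsey bound applied to $\mathbf{t}_1$, any $2^{M^{c_1}}$ vertices (for an appropriate constant $c_1 = c_1(\mathbf{t},s)$) contain either a clique of size $M$ in $H_{S_1}$ or an independent set of size $M$ in $H_{S_1}$; the latter is a clique of size $M$ in $H_{S_2}$, the union of the color classes in $S_2$, which is likewise semi-algebraic of complexity $(d,D,sm)$. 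Hence on $N$ vertices we can pass to a clique of size at least $N' = (\log N)^{1/c_1}$ in the union of color classes indexed by some half of $[s]$.

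Iterating this bisection $\lceil \log_2 s \rceil$ times, each step restricting attention to one half of the surviving color set and to a vertex subset of size the $1/c_1$-th power of the logarithm of the previous size, we arrive after $\lceil \log_2 s\rceil$ steps at a vertex set that is a clique in a \emph{single} color class. The key bookkeeping point is that at every stage the hypergraph we apply the two-color bound to is of complexity $(d, D, sm)$ — the number of polynomials never exceeds $sm$, since we are always OR-ing at most $s$ of the original color classes — so $c_1$ can be chosen once and for all depending only on $\mathbf{t}$ and $s$. Composing the $\lceil \log_2 s \rceil$ logarithm-and-power operations, the final clique has size at least $(\log N)^{\delta}$ for $\delta = c_1^{-\lceil \log_2 s\rceil}$, which depends only on $\mathbf{t}$ and $s$, as required.

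I should note that Lemma \ref{lemma:multicolor} is attributed to \cite{CFPSS14} in the excerpt, so in the paper this would simply be cited; the proposal above is the natural self-contained derivation one would write if a proof were wanted. The only genuinely delicate point — and the one I would double-check — is the claim that taking a disjunction of $t$ semi-algebraic hypergraphs of complexity $(d,D,m)$ yields one of complexity $(d,D,tm)$: this is immediate from the definition, since the defining polynomials of the disjunction can be taken to be the concatenation of the defining polynomial tuples and the controlling function $\Phi$ is the appropriate Boolean combination of the individual $\Phi_i$'s acting on disjoint blocks of sign coordinates. Everything else is the routine iteration of the two-color bound, and no step presents a real obstacle beyond keeping track of how the (single) constant $c_1$ propagates through $O(\log s)$ nested applications.
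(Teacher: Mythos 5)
This lemma is not proved in the paper at all; it is quoted from \cite{CFPSS14}, where the multicolor statement is established directly by running the cell-decomposition/same-type machinery on all $s$ semi-algebraic relations simultaneously (equivalently, on the single semi-algebraic structure cut out by all $sm$ polynomials at once), so that the two-color argument is never iterated. Your proposal instead tries to bootstrap the two-color bound by bisecting the color set, and this is where it breaks quantitatively. Each bisection step replaces a vertex set of size $x$ by one of size $(\log x)^{1/c_1}$. Composing this map twice gives $\bigl(\log\bigl((\log x)^{1/c_1}\bigr)\bigr)^{1/c_1}=\bigl(\tfrac{1}{c_1}\log\log x\bigr)^{1/c_1}$, which is an iterated logarithm, not $(\log x)^{c_1^{-2}}$. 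After $\lceil\log_2 s\rceil$ rounds you therefore end up with a monochromatic clique of size only about $\bigl(\log^{(\lceil\log_2 s\rceil)}N\bigr)^{O(1)}$ (an $\lceil\log_2 s\rceil$-fold iterated logarithm), which for every $s\geq 3$ is far weaker than the claimed $(\log N)^{\delta}$. Your final sentence, taking $\delta=c_1^{-\lceil\log_2 s\rceil}$, silently conflates ``compose log-then-power'' with ``compose power-then-power''; in Ramsey-number language, the recursion $R_s\leq R_2\circ R_{\lceil s/2\rceil}$ with $R_2(n)=2^{n^{O(1)}}$ produces a tower of height $\Theta(\log s)$, not a single exponential. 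So the reduction as written does not prove the lemma; one really needs the simultaneous multicolor argument of \cite{CFPSS14} (or at least a single application of a Ramsey-type statement that homogenizes all $s$ relations at once).

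Two smaller remarks. First, the step you flag as the delicate one (that an OR of $t$ color classes of complexity $(d,D,m)$ is semi-algebraic with $tm$ polynomials) is fine provided all color classes are witnessed by the same point assignment; if each $H_i$ comes with its own embedding $P_i\subset\mathbb{R}^d$, the Boolean combination lives in $\mathbb{R}^{sd}$, i.e.\ has complexity $(sd,D,sm)$ — harmless here since everything still depends only on $\mathbf{t}$ and $s$, but worth stating. Second, for $s=2$ your argument is correct and is exactly the two-color bound; the gap appears only once a genuine iteration is needed.
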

	
	\begin{proof}[Proof of weaker version of Lemma \ref{lemma:convex}]
		By Lemma \ref{lemma:monotone}, $M$ contains a $q\times N_0$ sized row-monotone submatrix $M_0$, where $N_0=N^{1/2^{q}}$. Next, we define a coloring of the complete 3-uniform hypergraph~$H$ on vertex set $[N_0]$ with $3^{q}$ colors, in which each colorclass is semi-algebraic of complexity $(q,1,q)$. Given $f\in \{+,-,0\}^{q}$, the edge $\{a,b,c\}$ with $a<b<c$ is colored with color $f$ if $$\sign(M(i,a)-2M(i,b)+M(i,c))=f(i)$$ for $i\in [q]$. By Lemma \ref{lemma:multicolor}, there exists $\beta>0$ depending only on $q$ such that $H$ contains a monochromatic clique $C\subset [N_0]$ of size $(\log N_0)^{\beta}>(\log N)^{\delta}$ for some $\delta=\delta(q)>0$. The submatrix $M'$ of $M_0$, whose columns are indexed by the elements of $C$, is cupcap, thus finishing the proof.
	\end{proof}
	
	We remark that this weaker version also follows from a result of Moshkovitz and Shapira \cite{MS12} on a generalization of the Erd\H{o}s-Szekeres theorem, giving $\delta=1/3^q$. Now let us provide a self-contained proof of the lemma as well.
	
	\begin{proof}[Proof of Lemma \ref{lemma:convex}]
		Let $f_q(s,t)$ denote the smallest $N$ such that any row-increasing $q\times N$ sized matrix $M$ contains a cupcap submatrix $M'$ such that either
		\begin{itemize}
			\item  $M'$ has size $q\times s$ and every row of $M$ is a cap, or 
			\item  $M'$ has size $q\times t$ and at least one row of $M$ is a cup. 
		\end{itemize}
		First, let us show that $f_1(s,t)\leq 2^{s+t}$, that is, every strictly monotone increasing sequence of $2^{s+t}$ real numbers contains either a cap of length $s$, or a cup of length $t$. We will proceed by induction on $s+t$. In case $s+t\leq 5$, the statement is trivial, as each sequence of length at least $2$ contains both a cup and a cap of length $2$. So suppose that $s+t>5$. Let $N=f(s,t)-1$ and let $x_1,\dots,x_{N}$ be a strictly monotone increasing sequence with no cap of size $s$ and no cup of size $t$. Note that if $(x_1+x_N)/2\leq a<b\leq x_{N}$, then $x_1,a,b$ is a cap, so at most $f(s-1,t)-1$ elements of $x_1,\dots,x_{N}$ are at least $(x_1+x_N)/2$. Similarly, there are at most $ f(s,t-1)-1$ elements of $x_1,\dots,x_{N}$ that are at most $(x_1+x_N)/2$, as for any $x_1\leq a<b\leq (x_1+x_N)/2$, $a,b,x_{N}$ is a cup. Hence, $N\leq f(s-1,t)+f(s,t-1)-2\leq 2^{s+t}-2$, concluding the case $q=1$.
		
		Next, we show that for $q\geq 2$, we have 
		\begin{equation}\label{equ:fqst}
			f_q(s,t)\leq 4q f_q(s-1,t)f_{q-1}(t,t).
		\end{equation}
		Let $N=f_q(s,t)-1$, and let $M$ be a $q\times N$ sized row-increasing matrix containing no $q\times s$ sized submatrix in which every row is a cap, and no $q\times t$ sized cupcap submatrix in which at least one row is a cup. After possibly scaling and shifting each row, we may assume that $M(\ell,1)=0$ and $M(\ell,N)=1$ for $\ell\in [q]$. For $\ell\in [q]$ and $k=1,2,\dots$, let $I_{\ell,k}$ denote the set of indices $i$ such that $2^{-k}<M(\ell,i)\leq 2^{-k+1}$. Also, let $J_{\ell,1},\dots,J_{\ell,p_{\ell}}$ be the subsequence of $I_{\ell,1},I_{\ell,2},\dots$ which contains only the nonempty elements. Then $(J_{\ell,k})_{k\in [p_{\ell}]}$ forms a partition of $[N]\setminus\{1\}$ into non-empty intervals. We make two observations.
		
		\begin{description}
			\item[(i)]  $p_{\ell}< 2f_{q-1}(t,t)$ for every $\ell\in [q]$. 
			
			Indeed, suppose this is not the case, and without loss of generality, assume that $p_q\geq 2f_{q-1}(t,t)$. Let $N_0=\lfloor p_q/2\rfloor$, and select an arbitrary element of $J_{q,2k}$ for $1\leq k\leq N_0$. Let $M_0$ be the submatrix of $M$ induced by the columns indexed by the selected elements. Observe that the last row of $M_0$ is a cup. Remove this row, and let the resulting submatrix be $M_1$. As $N_0\geq f_{q-1}(t,t)$, $M_1$ contains a cupcap submatrix of size $(q-1)\times t$, which gives a cupcap submatrix of $M$ of size $q\times t$ with at least one row being a cup.
			
			\item[(ii)] $\sum_{\ell=1}^{q}p_{\ell}>(N-1)/(2f_q(s-1,t))$.
			
			Again, suppose this is not case.
			Then there exists an interval $J\subset [N]\setminus\{1\}$ such that $|J|\geq f_{q}(s-1,t)$, and $J$ is completely contained in some interval among $J_{\ell,1},\dots,J_{\ell,p_{\ell}}$ for every $\ell\in [q]$. This is true as the total number of endpoints of the intervals $J_{\ell,i}$ for $\ell\in [q]$ and $i\in [p_{\ell}]$ is at most $P=2\sum_{\ell=1}^{q}p_{\ell}$, so there will be an interval $J\subset [N]\setminus\{1\}$ of size at least $(N-1)/P\geq f_q(s-1,t)$ containing no such endpoint. Note that for every $\ell\in [q]$ and $i,j\in J$ with $i<j$, we have that $0=M(\ell,1),M(\ell,i),M(\ell,j)$ is a cap. Let $M_0$ be the submatrix of $M$ induced by the columns indexed by the elements of $J$. Then there are two cases. Either $M_0$ contains a $q\times (s-1)$ sized submatrix whose every row is a cap, which then together with the first column of $M$ is a $q\times s$ sized submatrix of $M$ whose every row is a cap. Or, $M_0$ contains a $q\times t$ sized cupcap submatrix with at least one row that is a cup. In both cases, we get a contradiction.
		\end{description}
		Therefore, by comparing (i) and (ii), we deduce that
		$$\frac{N-1}{2f_q(s-1,t)}<\sum_{\ell=1}^q p_{\ell}< 2qf_{q-1}(t,t),$$
		giving the desired inequality (\ref{equ:fqst}). Now we show by induction on $q$ and $s$ that $f_{q}(s,t)\leq 2^{(q+1)st^{q-1}}$ if $q,s,t\geq 2$, or if $q=1$ and $s=t\geq 2$. As we have $f_1(t,t)\leq 2^{2t}$, this holds for $q=1$ and $s=t$. Now suppose that $q\geq 2$. The inequality is also trivially true if $s=2$, so let us assume $s\geq 3$. Then, we have
		$$f_q(s,t)\leq 4q \cdot f_{q-1}(t,t)\cdot f_q(s-1,t)\leq 4q\cdot 2^{qt^{q-1}}\cdot 2^{(q+1)(s-1)t^{q-1}}\leq 2^{(q+1)st^{q-1}}.$$
		
		In conclusion, we showed that any row-increasing matrix $M$ of size $q\times N$ contains a cupcap matrix of size $q\times n$ if $N\geq 2^{(q+1)n^q}$. Clearly, row-increasing can be relaxed to row-monotone, as multiplying every element of a row by $-1$ does not change whether it is a cupcap matrix.

		Let $M\in\mathbb{R}^{q\times N}$. By Lemma \ref{lemma:monotone}, $M$ contains a row-monotone submatrix $M_0$ of size $q\times N^{1/2^{q}}$. 
		But then, according to the previous argument, $M_0$ contains a $q\times N'$ sized cupcap submatrix with $N'\geq (\frac{1}{q+1}\log_2 N^{1/2^{q}})^{1/q}\geq \frac{1}{4}(\log N)^{1/q}$. This finishes the proof.
	\end{proof}
	
	Lemma \ref{lemma:convex} is sharp up to the constant factor $c$ if $q=1$, as witnessed by the matrix $M=(1\ 2 \ \dots\ N)$. 
	However, we are unable to decide if it is sharp for $q\geq 2$ as well, which we leave as an interesting open problem (see the concluding remarks).
	
	\medskip
	
	If $X=(x_1,\dots,x_{N})$ is a sequence and $\delta\in\mathbb{R}$, the \emph{shift of $X$ by $\delta$} is the sequence $X+\delta=(x_1+\delta,\dots,x_{N}+\delta)$. 
	
	\begin{definition}
		
		Given $\Delta>1$ and $\tau\in\{-,+\}\times \{\searrow,\nearrow\}$, a sequence $x_1,\dots,x_{N}$ is \emph{$(\Delta,\tau)$-exponential} if the following holds:
		\begin{enumerate}
			\item if $\tau=(+,\nearrow)$, then $0<\Delta x_i<x_{i+1}$ for $i\in [N-1]$,
			\item if $\tau=(+,\searrow)$, then $0<\Delta x_{i+1}<x_{i}$ for $i\in [N-1]$,
			\item if $\tau=(-,\nearrow)$, then $0<\Delta(-x_i)<(-x_{i+1})$ for $i\in [N-1]$,
			\item if $\tau=(-,\searrow)$, then $0<\Delta(-x_{i+1})<(-x_{i})$ for $i\in [N-1]$.
		\end{enumerate}
		Also, say that a sequence is \emph{$\Delta$-exponential} if it is $(\Delta,\tau)$-exponential for some $\tau\in\{-,+\}\times \{\searrow,\nearrow\}$.
	\end{definition}

	\begin{lemma}\label{lemma:exponential}
		Let $X=(x_i:i\in [N])$ be a sequence of real numbers, $\Delta >  2$, and $z=\lceil \log_2 \Delta \rceil$. Consider the subsequence $X_{0}=(x_{iz}:i\in [\lfloor (N-1)/z\rfloor])$. 
		\begin{enumerate}
			\item[(a)] If $X$ is a monotone increasing cup, then some shift of $X_0$ is $((+,\nearrow),\Delta)$-exponential.
			\item[(b)] If $X$ is a monotone increasing cap, then some shift of $X_0$ is $((-,\searrow),\Delta)$-exponential.
			\item[(c)] If $X$ is a monotone decreasing cup, then some shift of $X_0$ is $((+,\searrow),\Delta)$-exponential.
			\item[(d)] If $X$ is a monotone decreasing cap, then some shift of $X_0$ is $((-,\nearrow),\Delta)$-exponential.
		\end{enumerate}
	\end{lemma}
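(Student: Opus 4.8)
The plan is, in each of the four cases, to extract from one well-chosen instance of the cup/cap inequality a geometric recurrence with ratio at least $2$ for an appropriate non-negative ``magnitude'' sequence, and then to observe that passing to the $z$-spaced subsequence $X_0$ amplifies the ratio from $2$ to $2^z\ge\Delta$. Write $L=\lfloor(N-1)/z\rfloor$, so $X_0=(x_z,x_{2z},\dots,x_{Lz})$. Since $\Delta>2$ we have $z=\lceil\log_2\Delta\rceil\ge 2$, so all of the indices $z,2z,\dots,Lz$ lie in $\{2,\dots,N-1\}$ --- precisely the range in which the recurrences below hold. Moreover, the map $X\mapsto -X$ negates the entries, interchanges ``cup'' with ``cap'' and ``increasing'' with ``decreasing'', reverses the sign in $\tau$, and satisfies $(-X)_0=-(X_0)$; hence it swaps cases (a) and (d) and cases (b) and (c), and it suffices to prove (a) and (b).

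Case (a): assume $x_1<\dots<x_N$ is a cup. For $2\le j\le N-1$, applying the cup inequality to the triple $(1,j,j+1)$ gives $x_1+x_{j+1}\ge 2x_j$, i.e.\ $y_{j+1}\ge 2y_j$ where $y_j:=x_j-x_1$; and $y_j>0$ for $j\ge 2$ since $X$ is increasing. Iterating, $y_b\ge 2^{\,b-a}y_a$ for $2\le a\le b\le N$, so in particular $y_{(i+1)z}\ge 2^z y_{iz}\ge\Delta y_{iz}>0$ for $i\in[L-1]$. Shift $X_0$ by $s:=-x_1-\varepsilon$ for $0<\varepsilon<y_z$; its $i$-th entry $w_i:=y_{iz}-\varepsilon$ is positive, and $w_{i+1}-\Delta w_i\ge(\Delta-1)\varepsilon>0$, so $0<\Delta w_i<w_{i+1}$ for all $i$. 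Thus $X_0+s$ is $(\Delta,(+,\nearrow))$-exponential.

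Case (b): assume $x_1<\dots<x_N$ is a cap. For $2\le j\le N-1$, applying the cap inequality to the triple $(j-1,j,N)$ gives $x_{j-1}+x_N\le 2x_j$, which rearranges to $w_{j-1}\ge 2w_j$ where $w_j:=x_N-x_j$; and $w_j>0$ for $j\le N-1$ since $X$ is increasing. Iterating downward, $w_{iz}\ge 2^z w_{(i+1)z}\ge\Delta w_{(i+1)z}>0$ for $i\in[L-1]$ (each step of the chain invokes the recurrence only at indices in $\{2,\dots,N-1\}$). Shift $X_0$ by $s:=-x_N+\varepsilon$ for a sufficiently small $\varepsilon>0$; its $i$-th entry equals $-(w_{iz}-\varepsilon)$, which is negative, with $|w_{iz}-\varepsilon|$ strictly decreasing in $i$ and $(w_{iz}-\varepsilon)-\Delta(w_{(i+1)z}-\varepsilon)\ge(\Delta-1)\varepsilon>0$. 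Hence $X_0+s$ is $(\Delta,(-,\searrow))$-exponential.

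This completes all four cases. There is no genuine obstacle --- the lemma is elementary; the only point needing a little care is that the defining inequalities of $(\Delta,\tau)$-exponential are strict, whereas the recurrence (e.g.\ $y_{j+1}\ge 2y_j$ in case (a)) can be tight, for instance for an exactly geometric sequence when $\Delta$ is a power of $2$. As in the two cases above, this is why we do not shift by exactly $-x_1$ or $-x_N$ but perturb by an arbitrarily small $\varepsilon>0$ chosen so that the relevant endpoint is sent to a small value of sign opposite to that of the entries of $X_0$; since $\Delta>1$, this produces a strict surplus of $(\Delta-1)\varepsilon$ in every inequality that must hold, while leaving the entries on the correct side of $0$.
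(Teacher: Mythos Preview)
Your argument is correct and follows essentially the same route as the paper: in case (a) you apply the cup inequality to the triple $(1,j,j+1)$ to get $x_{j+1}-x_1\ge 2(x_j-x_1)$ and then pass to the $z$-spaced subsequence, and in case (b) you use the triple $(j-1,j,N)$ and the shift centred at $x_N$; the paper does exactly this (and says the remaining cases are similar, which your negation symmetry makes explicit).

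The one genuine addition in your write-up is the $\varepsilon$-perturbation of the shift. The paper simply asserts that $X_0+(-x_1)$ (respectively $X_0+(-x_N)$) is $(\Delta,\tau)$-exponential, but the definition requires \emph{strict} inequalities $0<\Delta x_i<x_{i+1}$, while the recurrence only yields $x_{(i+1)z}-x_1\ge \Delta(x_{iz}-x_1)$; equality can occur, e.g.\ when $\Delta$ is a power of $2$ and $x_j-x_1=c\,2^{j-1}$. Your shift by $-x_1-\varepsilon$ (and the analogous $-x_N+\varepsilon$) cleanly converts the weak inequality into a strict one via the $(\Delta-1)\varepsilon$ surplus, so your version is in fact slightly more careful than the paper's on this point.
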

	\begin{proof}
		First, consider (a), that is, suppose that $X$ is a monotone increasing cup. Then for all $1 \le i < j < k \le N$, we have $x_k - x_j \ge x_j - x_i > 0$.
		Fixing $i = 1$ and $k = j + 1$, we acquire $(x_{j+1} - x_1) \ge  2(x_j - x_1) > 0$ for $1 < j < N$, and then $(x_{(j+1)z} - x_1) \ge 2^z (x_{jz} - x_1) \ge \Delta (x_{jz} - x_1)$.
		This means that $X_0+(-x_1)$ is $((+,\nearrow),\Delta)$-exponential.
		
		The other three cases hold by similar arguments. In (b) and (c), $X_0+ (-x_N)$ suffices, while in (d),  $X_0+ (-x_1)$.
	\end{proof}
	
	\subsection{Integer matrices and domination hypergraphs}

	In this section, we are concerned with integer matrices $P \in \mathbb{Z}^{r \times N}$. Later, these matrices will correspond to the entry-wise logarithms of appropriately scaled and shifted witness matrices of hypergraphs.

        Let $h$ be an integer threshold. 	We associate an edge-colored $r$-uniform hypergraph $H$ with the pair $(P,h)$ as follows. 
	The vertex set of $H$ is $[N]$, and for $1\leq q_1<\dots<q_r\leq N$, $\{q_1,\dots,q_r\}$ is an edge of color 0 if $$\max\{P(i,q_i): i\in [r]\}\leq h,$$ and for $j\in [r]$, it is an edge of color $j$ if $P(j,q_j)\geq h+4$ and $$P(j,q_j)\geq 2+\max\{P(i,q_i): i\in [r]\setminus\{j\}\}.$$ 
	We refer to $H$ as the \emph{domination hypergraph} of $(P,h)$. 
	For convenience, we extend the definition of $H$ to $r=1$ as well, in which case each vertex at most $h$ is colored with 0, each vertex at least $h+4$ is colored with 1, and $h+1,h+2,h+3$ are uncolored.
	See Figure \ref{fig:domination-coloring} for an example of the domination hypergraph.
	
	The main goal of this section is to prove the following lemma.
	
	\begin{lemma}\label{lemma:domination}
		For every positive integer $r$ and $k$, there exists $c=c(r,k)>0$ such that the following holds for every sufficiently large $N$. Let $P_1,\dots,P_k\in \mathbb{Z}^{r\times N}$ be row-monotone matrices, and let $h_1,\dots,h_k$ be integers. Let $H_{i}$ be the domination hypergraph of $(P_i,h_i)$ for $i\in [k]$. Then there exists $C\subset [N]$ such that $|C|\geq cN^{\frac{1}{rk-k+1}}$ and $C$ is a monochromatic clique in $H_{i}$ for every $i\in [k]$.
	\end{lemma}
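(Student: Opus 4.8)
The plan is to prove Lemma~\ref{lemma:domination} by exploiting row-monotonicity to turn everything into a statement about column-intervals, and then removing the hypergraphs one ``comparison'' at a time; the exponent $rk-k+1=k(r-1)+1$ should arise as the total number of comparisons that must be settled, plus one.

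\textbf{Preprocessing and cells.} First I would use that each $P_i$ is row-monotone and \emph{integer}-valued: for every row $(i,j)$ the column set $[N]$ splits into an interval where $P_i(j,\cdot)\le h_i$, an interval where $h_i<P_i(j,\cdot)\le h_i+3$, and an interval where $P_i(j,\cdot)\ge h_i+4$; the first and third are a prefix and a suffix of $[N]$ in some order, while the middle one has at most $3$ columns since a strictly monotone integer sequence takes at most three values in $(h_i,h_i+3]$. Deleting the at most $3rk$ columns lying in some middle interval costs only a constant factor, and afterwards every column is \emph{low} ($\le h_i$) or \emph{high} ($\ge h_i+4$) for every row. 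The $rk$ low/high dichotomies partition the remaining columns into at most $2^{rk}$ \emph{cells}, each an intersection of a prefix and a suffix and hence an interval. The slack of $4$ versus $2$ in the definition of the domination hypergraph makes each cell extremely rigid: inside a fixed cell, if at most one row of $P_i$ is high then every $r$-subset of the cell receives the \emph{same} colour under $H_i$ (colour $0$ if no row is high, colour $j$ if $j$ is the unique high row, since a high value beats every low value by more than $2$), so such a cell is already a monochromatic clique for $H_i$; and if at least two rows of $P_i$ are high on the cell then its low rows play no role there, so $H_i$ restricted to the cell is governed entirely by its high rows.

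\textbf{Induction.} I would induct on $r$ (with $k$ arbitrary). For $r=1$ we are done, since $rk-k+1=1$ and the largest cell has $\ge N'/2^{k}$ columns and is monochromatic in all the $H_i$, where $N'$ is the number of surviving columns. For $r\ge 2$, pass to the largest cell, then repeatedly shrink it to a sub-interval in order to make each $H_i$ monochromatic. At each step, if some $H_i$ is still unresolved on the current interval, i.e.\ has a set $S_i$ of at least two high rows, I want to pass to a sub-interval on which one distinguished row of $S_i$ becomes irrelevant: either it dominates all the other high rows of $P_i$ there, which resolves $H_i$ entirely, or it is dominated by another high row there, which removes it from $S_i$. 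Such a sub-interval is obtained by re-thresholding the distinguished row inside the current interval and comparing it against the other high rows. The key count is the budget: settling $H_i$ needs at most $|S_i|-1\le r-1$ such steps, hence at most $k(r-1)$ steps overall, and if each step only needs to retain at least a $c\,N^{-1/(rk-k+1)}$ fraction of the current interval --- for instance by cutting it into $N^{1/(rk-k+1)}$ consecutive blocks and keeping the one on which the re-thresholded comparison goes one fixed way --- then the surviving interval has size at least $N\big/\big(N^{1/(rk-k+1)}\big)^{k(r-1)}=N^{1/(rk-k+1)}$ up to constants, and on it every $H_i$ is a monochromatic clique.

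\textbf{Main obstacle.} The technical heart is the re-thresholding step, which must settle one comparison while retaining a $c\,N^{-1/(rk-k+1)}$ fraction of the interval. The difficulty is that ``row $j$ dominates row $j'$ for $H_i$'' is not a pointwise statement about a single monotone sequence: at an $r$-subset $q_1<\dots<q_r$ it compares $P_i(j,q_j)$ with $P_i(j',q_{j'})$ at two \emph{different} order statistics, and the sign of $P_i(j,\cdot)-P_i(j',\cdot)$ can alternate even though each row is monotone. Bringing the number of relevant sub-intervals down to $O(N^{1/(rk-k+1)})$ --- so that the block count matches the denominator $rk-k+1$, rather than the naive $r^{k}$ one gets by handling the hypergraphs one after another --- is where the row-monotonicity, the fact that all rows in play inside a cell are high and interact only through their own growth, and careful order-statistic bookkeeping must be combined; this, together with tracking that every shrinking step keeps the column set an intersection of intervals (so that the remaining matrices stay row-monotone and the induction on $k$ implicit in the above really goes through), is the main work of the proof.
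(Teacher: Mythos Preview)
Your high-level plan matches the paper's: strip down to ``high'' rows (the paper does this via a separate short induction on the number of matrices violating the analogue of your cell condition, but your cell decomposition is a legitimate alternative), and then peel off rows one at a time with total budget $k(r-1)$, so that $rk-k+1$ emerges as the exponent. The paper phrases this as an induction on the total row count $R=\sum_i r_i$, allowing each matrix its own uniformity.

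The genuine gap is exactly the step you call the main obstacle, and your tentative block-cutting does \emph{not} work. Already for $r=2$, $k=1$, take $P(1,q)=2q+100$ and $P(2,q)=2q$ (both high): an edge $\{q_1<q_2\}$ gets colour~$1$ when $q_2-q_1\le 49$ and colour~$2$ when $q_2-q_1\ge 51$, so once $\sqrt N>51$ no block of length $\sqrt N$ is monochromatic, and on no such block is either row dominated throughout. What the paper uses instead is a dichotomy. In the row-increasing case, let $s_{\ell,q}$ be the first column where some earlier row of $P_\ell$ reaches the value $P_\ell(r_\ell,q)$. Either (i) some $\ell,q$ satisfy $q-s_{\ell,q}>N^{(R-k)/(R-k+1)}$, and then on the interval $[s_{\ell,q}+2,\,q]$ the last row of $P_\ell$ is beaten pointwise by $2$ and can be deleted, so one recurses with $R-1$ rows on an interval of that size; or (ii) no such $\ell,q$ exists for \emph{any} $\ell$, and then the arithmetic progression with step $\approx N^{(R-k)/(R-k+1)}$ is monochromatic in \emph{all} the $H_\ell$ simultaneously, finishing the argument in one stroke. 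It is this ``all at once'' terminal branch that pins the budget at exactly $R-k$; a scheme that settles the hypergraphs one by one cannot reach the sharp exponent. Two smaller points: matrices with both increasing and decreasing high rows are handled by first splitting at the column where the increasing maxima overtake the decreasing maxima (your cells do not address this mixed case); and row-monotonicity is preserved under \emph{any} column subset, so your worry about keeping intervals is needed only for the domination inequalities, not for monotonicity.
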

	
	For our convenience, instead of Lemma \ref{lemma:domination}, we prove the following slightly more general result.
	
	\begin{lemma}\label{lemma:domination2}
		For every positive integer $k$ and every $k$-tuple of positive integers $\mathbf{r}=(r_1, \cdots, r_k)$, there exists $c=c(k,\mathbf{r})>0$ such that the following holds for every sufficiently large $N$. 
		For each $i \in [k]$, let $P_i \in \mathbb{Z}^{r_i \times N}$ be a row-monotone matrix, and let $h_i$ be an integer threshold.
		Then there exists $C\subset [N]$ such that $C$ is a monochromatic clique in the domination hypergraph $H_{i}$ of $(P_i,h_i)$ for every $i\in[k]$, and $$|C|\geq c N^{\frac{1}{R-k+1}},$$
		where $R=\sum_{i=1}^{k}r_i$.
	\end{lemma}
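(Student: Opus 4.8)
The plan is to induct on $k$, the number of matrices. Before that, I would first prove the base case $k=1$ directly, since it already contains the core combinatorial idea. For a single row-monotone matrix $P\in\mathbb{Z}^{r\times N}$ and threshold $h$, partition the column index set $[N]$ according to, for each row $i$, which of the three ``regimes'' the entry $P(i,q)$ lies in: small ($P(i,q)\le h$), intermediate ($h<P(i,q)<h+4$), or large ($P(i,q)\ge h+4$). Because each row is monotone, each of these regimes is an interval of columns, so the common refinement has at most $O_r(1)$ parts; passing to the largest part costs only a constant factor and reduces us to the case where for every row $i$ the behaviour is ``uniform'' across all surviving columns. If some row is always small and all rows are always small, we take all of $C$ to be a color-$0$ clique. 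Otherwise, the key observation is that if $P(j,q)\ge h+4$ on the whole surviving block while the block is long, then within a sub-block of length $\ge N^{\Omega(1/r)}$ (chosen greedily along the monotone row $j$) we can guarantee the ``domination gap'' $P(j,q_j)\ge 2+\max_{i\ne j}P(i,q_i)$ for every increasing $r$-tuple — this is where I'd use that the values are integers and that a monotone integer sequence of length $L$ spreads over $\ge L$ distinct values, so we can thin it to make the $j$-th coordinate outrun all the others. This yields a color-$j$ clique of the required size $cN^{1/r}$, matching $R-k+1 = r$ when $k=1$.

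For the inductive step, suppose the claim holds for $k-1$ matrices. Given $P_1,\dots,P_k$ with $R=\sum r_i$, I would first apply the $k=1$ analysis to $P_k$ alone to locate a subset $C'\subset[N]$ with $|C'|\ge c' N^{1/r_k}$ on which $H_k$ is a monochromatic clique; in doing so, the thinning used to create the ``domination gap'' for $H_k$ only discards columns along monotone rows, so the restrictions of $P_1,\dots,P_{k-1}$ to $C'$ remain row-monotone matrices (with column index set of size $|C'|$). Then I apply the induction hypothesis to $P_1|_{C'},\dots,P_{k-1}|_{C'}$, obtaining $C\subset C'$ with $C$ monochromatic in each of $H_1,\dots,H_{k-1}$ and $|C|\ge c''|C'|^{1/((R-r_k)-(k-1)+1)} = c'' |C'|^{1/(R-r_k-k+2)}$. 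Chaining the two bounds gives
$$|C|\ \ge\ c''\,(c')^{1/(R-r_k-k+2)}\, N^{\frac{1}{r_k(R-r_k-k+2)}}.$$
This exponent is in general smaller than the target $\frac{1}{R-k+1}$, so a naive ``peel one matrix at a time'' recursion loses. The fix — and the heart of the argument — is to \emph{interleave} the thinnings: rather than fully resolving $H_k$ and then starting over, one should process all $k$ matrices simultaneously, doing one ``regime refinement + one layer of domination-gap thinning'' pass that handles all of them at once, so that the exponent losses add rather than multiply. Concretely, after the constant-cost regime refinement applied to all $P_i$ at once, the surviving block has a well-defined ``winning row'' $j_i$ for each $i$ (or all-small), and we must simultaneously realize $R' := \sum_i (\text{number of rows we still need to outrun}) \le R-k$ many domination constraints; a single greedy pass along the monotone rows, thinning by a factor of roughly $N^{1/R'}$ at each of the $\le R'$ ``competing'' rows — but shared across the $k$ matrices wherever rows coincide in effect — yields $|C|\ge cN^{1/(R-k+1)}$.

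The main obstacle I anticipate is exactly this bookkeeping: making precise the ``$R-k+1$'' arithmetic, i.e. proving that the total number of independent greedy thinning steps needed is at most $R-k$ (not $R$), so that the exponent is $\frac{1}{R-k+1}$ rather than $\frac{1}{R+1}$ or worse. The ``$-k$'' saving should come from the fact that within each domination hypergraph $H_i$ we are free to choose the winning row $j_i$ \emph{after} seeing which regime block we land in, and a winning row does not itself need to be ``outrun'' — it contributes to the count as a free coordinate. Handling the color-$0$ case uniformly (where no row wins, costing zero thinning for that $i$) and verifying that a single monotone integer sequence can be thinned to enforce arbitrarily large additive gaps over \emph{all} increasing $r$-tuples — which needs the integrality hypothesis in an essential way, and a length loss of only a constant power — are the two remaining technical points. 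Once these are in place, specializing to $r_1=\dots=r_k=r$ gives $R=rk$ and recovers Lemma \ref{lemma:domination} with exponent $\frac{1}{rk-k+1}$.
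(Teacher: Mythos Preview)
Your proposal correctly identifies the central difficulty---that peeling off one matrix at a time multiplies the exponent losses and cannot reach $\tfrac{1}{R-k+1}$---but the ``interleaving'' you offer as a fix is not an argument. You assert that after a regime refinement there is a well-defined winning row $j_i$ for each $i$ and that a single greedy pass handles the remaining $R-k$ constraints, but neither claim is substantiated. There is no a~priori winning row: for a row-increasing $P_\ell$ it may happen that row $r_\ell$ is globally dominated by some earlier row on the entire block, in which case no thinning of the block can produce a colour-$r_\ell$ clique, and one must instead change the target colour. Your $k=1$ sketch has the same gap: ``thin to make the $j$-th coordinate outrun all the others'' presumes you already know which $j$ can be made to win, and the sentence about integer sequences spreading over $L$ values does not by itself yield a set of size $N^{1/r}$ on which an arbitrary row dominates.

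The paper resolves this not by induction on $k$ but by induction on $R=\sum r_i$, and the engine is a dichotomy you are missing. After reducing (at constant cost) to the case where each $P_\ell$ is either row-increasing or row-decreasing and every column has a large entry, one defines for each $\ell$ and each $q$ the nearest index $s_{\ell,q}$ at which some non-extremal row of $P_\ell$ overtakes the extremal row's value at $q$. Either some $|q-s_{\ell,q}|$ exceeds $N^{(R-k)/(R-k+1)}$, in which case on that long interval the extremal row of $P_\ell$ is dominated and can be deleted, so one recurses on $R-1$ total rows over $N^{(R-k)/(R-k+1)}$ columns; or all gaps are short, in which case the arithmetic progression with step roughly $N^{(R-k)/(R-k+1)}$ is simultaneously a colour-$r_\ell$ (or colour-$1$) clique in every $H_\ell$, of size about $N^{1/(R-k+1)}$, with no recursion needed. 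The ``$-k$'' you were looking for is produced by this second branch: in the no-large-gap case one wins in \emph{all} $k$ hypergraphs at once with a single thinning, rather than paying once per matrix. Your proposal does not contain this dichotomy, and without it the exponent bookkeeping cannot be closed.
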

	
	\begin{proof}
		For simplicity, we first prove the statement with the following additional assumption.
		Then, we remove this assumption and consider the general case in the end of the proof.
		
		\begin{enumerate}
			\item[($*$)] For every $\ell \in [k]$ and $q \in [N]$, there exists an element in the column $P_\ell(\cdot, q)$ that is equal to or  larger than $h_\ell + 4$. 
		\end{enumerate}
		
		We will prove by induction on $R$ that there exists $C\subset [N]$ such that $|C|\geq \frac{1}{3^{R}}N^{\frac{1}{R-k+1}}$ and $C$ is a monochromatic clique in $H_1,\dots,H_k$. 
		In the base case when $R=1$, $C=[N]$ is a clique of color~1 in~$H_1$ by ($*$), and $|C|\geq \frac{1}{3^{R}}N$.

		Now let us assume that $R \geq 2$. If $r_{\ell}=1$ for some $\ell\in [k]$, then again by ($*$), every vertex of $H_{\ell}$ is of color 1. By our induction hypothesis, there exists $C\subset [N]$ of size at least $\frac{1}{3^{R-1}}N^{\frac{1}{(R-1)-(k-1)+1}}>\frac{1}{3^{R}}N^{\frac{1}{R-k+1}}$ such that $C$ is a monochromatic clique in $H_{t}$ for $t\in [k]\setminus \{\ell\}$, but then $C$ is also a monochromatic clique in $H_{\ell}$. Hence, we may assume that $r_1,\dots,r_k\geq 2$.

		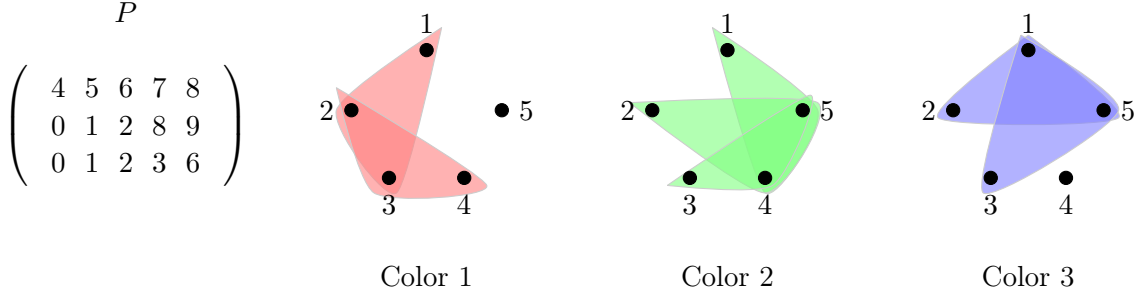
\begin{figure}
			\centering
			\begin{tikzpicture}
				\matrix (m) at (1,0) [matrix of math nodes,left delimiter=(,right delimiter=),row sep=0cm,column sep=0cm] (m) {
					4 & 5 & 6 & 7 & 8 \\
					0 & 1 & 2 & 8 & 9 \\
					0 & 1 & 2 & 3 & 6  \\};
				
				\node at (1,1.5) {$P$};
				
				\node (v1) at (5,1) {};
				\node (v2) at (4,0.2) {};
				\node (v3) at (4.5,-0.7) {};
				\node (v4) at (5.5,-0.7) {};
				\node (v5) at (6,0.2) {};
				
				\node at (5,-2) {Color 1};
				
				\begin{scope}[fill opacity=0.6] 
					\filldraw[color=black!20,fill=red!50] plot [smooth, tension=0.4] coordinates{
						($(v1)+(0.2,0.3)$)
						($(v2) + (-0.2,0)$)
						($(v3) + (0.1,-0.2)$)
						($(v1)+(0.2,0.3)$)};
					
					\filldraw[color=black!20,fill=red!50] plot [smooth, tension=0.4] coordinates{
						($(v2)+(-0.2,0.3)$)
						($(v3) + (-0.2,-0.2)$)
						($(v4) + (0.3,-0.1)$)
						($(v2) + (-0.2,0.3)$)};
				\end{scope}
				
				\node[vertex,label=above:$1$] at (v1) {};
				\node[vertex,label=left:$2$] at (v2) {};
				\node[vertex,label=below:$3$] at (v3) {};
				\node[vertex,label=below:$4$]  at (v4) {};
				\node[vertex,label=right:$5$] at (v5) {};
				
				\node (x1) at (9,1) {};
				\node (x2) at (8,0.2) {};
				\node (x3) at (8.5,-0.7) {};
				\node (x4) at (9.5,-0.7) {};
				\node (x5) at (10,0.2) {};
				
				\node at (9,-2) {Color 2};
				
				\begin{scope}[fill opacity=0.6] 
					\filldraw[color=black!20,fill=green!50] plot [smooth, tension=0.4] coordinates{
						($(x1)+(-0.2,0.3)$)
						($(x4) + (0,-0.2)$)
						($(x5) + (0.2,0)$)
						($(x1)+(-0.2,0.3)$)};
					
					\filldraw[color=black!20,fill=green!50] plot [smooth, tension=0.4] coordinates{
						($(x2)+(-0.3,0.1)$)
						($(x4) + (0,-0.2)$)
						($(x5) + (0.2,0.1)$)
						($(x2) + (-0.3,0.1)$)};
					
					\filldraw[color=black!20,fill=green!50] plot [smooth, tension=0.4] coordinates{
						($(x3)+(-0.3,-0.1)$)
						($(x4) + (0.1,-0.1)$)
						($(x5) + (0.1,0.2)$)
						($(x3) + (-0.3,-0.1)$)};
				\end{scope}
				
				\node[vertex,label=above:$1$] at (x1) {};
				\node[vertex,label=left:$2$] at (x2) {};
				\node[vertex,label=below:$3$] at (x3) {};
				\node[vertex,label=below:$4$]  at (x4) {};
				\node[vertex,label=right:$5$] at (x5) {};
				
				\node (y1) at (13,1) {};
				\node (y2) at (12,0.2) {};
				\node (y3) at (12.5,-0.7) {};
				\node (y4) at (13.5,-0.7) {};
				\node (y5) at (14,0.2) {};
				
				\node at (13,-2) {Color 3};
				
				\begin{scope}[fill opacity=0.6] 
					\filldraw[color=black!20,fill=blue!50] plot [smooth, tension=0.4] coordinates{
						($(y1)+(0,0.2)$)
						($(y2) + (-0.2,-0.1)$)
						($(y5) + (0.2,-0.1)$)
						($(y1)+(0,0.2)$)};
					
					\filldraw[color=black!20,fill=blue!50] plot [smooth, tension=0.4] coordinates{
						($(y1)+(-0.1,0.2)$)
						($(y3) + (-0.1,-0.2)$)
						($(y5) + (0.2,0)$)
						($(y1) + (-0.1,0.2)$)};
					
				\end{scope}
				
				\node[vertex,label=above:$1$] at (y1) {};
				\node[vertex,label=left:$2$] at (y2) {};
				\node[vertex,label=below:$3$] at (y3) {};
				\node[vertex,label=below:$4$]  at (y4) {};
				\node[vertex,label=right:$5$] at (y5) {};
				
			\end{tikzpicture}
			\caption{An example of a domination hypergraph, where we set $h=0$, thus no edges of color 0.}
			\label{fig:domination-coloring}
		\end{figure}

		Next, consider the case when there exists some $\ell\in[k]$ such that $P_{\ell}$ is neither row-increasing or row-decreasing. Without loss of generality, suppose that $\ell=k$. Let $I \subset [r_{k}]$ be the set of indices $i$ such that the $i$-th row of $P_k$ is monotone increasing. By assumption, $I \neq \emptyset$ and $I \neq [r_k]$. For each $q\in [N]$, let $m_q=\max_{i\in I} P_k(i,q)$ and $m'_q=\max_{i\in [r_k]\setminus I} P_k(i,q)$.  Then the sequence $(m_q)_{q\in [N]}$ is monotone increasing while $(m'_q)_{q \in [N]}$ is monotone decreasing. Let $q_0$ be the smallest integer such that $m_{q_0}>m'_{q_0}$, and if there exists no such $q_0$, let $q_0=N$. 
		If $q_0>N/2$, let $P_t'$ be the submatrix of $P_t$ formed by the first $N'=q_0-2$ columns for all $t \in [k]$, and let $H_t'$ be the domination hypergraph of $(P_t',h_t)$. 
		Clearly, $H_{t}'$ is an induced subhypergraph of $H_t$ inheriting the coloring.  
		Note that for every $1\leq q_1<\dots<q_{r_k}\leq q_0-2$, the maximum of $\{P_k'(i,q_i): i\in [r_k] \setminus I\}$ is larger than or equal to 2 plus the maximum of $\{P_k'(i,q_i):i\in I\}$. 
		Remove the rows of $P_k'$ indexed by the elements of $I$, and let $P_{k}''$ be the resulting matrix. If $H_{k}''$ is the domination hypergraph of $(P_{k}'',h_{k})$, then any monochromatic clique in $H_{k}''$ is also a monochromatic clique in $H_{k}'$. We can apply our induction hypothesis to the matrices $P_1',\dots,P_{k-1}',P_{k}''$, as their total number of rows is $R-|I|<R$: we get that there exists $C\subset [N']$ such that $C$ is monochromatic in $H_1',\dots,H_{k-1}',H_{k}''$ and $|C|\geq \frac{1}{3^{R-|I|}}N'^{\frac{1}{R-|I|-k+1}}\geq \frac{1}{3^{R}}N^{\frac{1}{R-k+1}}$. 
		In the other case, when $q_0 \le N / 2$, we proceed similarly, the main difference is that we define $P_t'$ to be the submatrix of $P_t$ formed by the last $N-q_0$ columns of $P_t$ for $t\in [k]$, and remove the rows of $P_k'$ indexed by the elements of $[r_k] \setminus I$.
		
		In the remainder of the proof, we assume that $P_{t}$ is either row-increasing or row-decreasing for every $t\in [k]$. 
		This, plus ($*$), indicates that there is no edge of color 0 in any $H_\ell$.
		
		Let $L \subset [k]$ be the set of indices $i$ such that $P_i$ is row-increasing.
		For each $\ell \in [k]$ and $q \in [N]$, define $s_{\ell,q}$ as follows: 
		\begin{itemize}
			\item if $\ell \in L$, then $s_{\ell,q}$ is the {\em smallest} integer such that for some $i\in [r_\ell-1]$, $P_\ell(i,s_{\ell,q})\geq P_\ell(r_\ell,q)$, with the convention to set $s_{\ell,q}=N$ if no such integer exists;
			\item if $\ell \notin L$, then $s_{\ell,q}$ is the {\em largest} integer such that for some $i = 2, \dots, r_\ell$, $P_\ell(i,s_{\ell,q})\geq P_\ell(1,q)$, with the convention to set $s_{\ell,q}=1$ if no such integer exists.
		\end{itemize}
		Then, consider the following three cases.
		\begin{description}
			\item[Case 1.] \emph{There exists some $\ell \in L$ and $q\in [N]$ such that $q-s_{\ell,q}>N^{\frac{R-k}{R-k+1}}$.}
			
			Without loss of generality, assume that $\ell=k$ and $i < r_k$ such that $P_k(i,s_{k,q}) \ge P_k(r_k,q)$.
			For each $t \in [k]$, let $P_t'$ be the submatrix of $P_t$ in which we keep the columns indexed from $s_{k,q}+2$ to $q$, and let $H'_t$ be the domination hypergraph of $(P_t',h_t)$. 
			Let $N' = q-s_{k,q}-1$ be the number of columns of $P_t'$. 
			Note that for all $q_1', q_2' \in [N']$,
			\begin{equation} \label{eq:crucial-obs}
				\begin{aligned}
					P_k'(r_k,q_1') \le P_k'(r_k,N') = P_k(r_k,q) &\le P_k(i,s_{k,q})  \\
					&\le P_k(i,s_{k,q}+2)-2 = P_k'(i,1)-2 \le P_k'(i,q_2')-2.
				\end{aligned}
			\end{equation}
			
			Let $P_k''$ be the submatrix of $P_k'$ we get by removing row $r_k$, and denote by $H_{k}''$ the domination hypergraph of $(P_k'',h_k)$.	
			The crucial observation here is that a monochromatic clique in $H_k''$ is also a monochromatic clique in $H_k'$. 
			Indeed, if for $1\leq q_1<\dots<q_{r-1}\leq N'$, $\{q_1,\dots,q_{r-1}\}$ is an edge of $H_k''$ of color $j \in [r_k-1]$, then by (\ref{eq:crucial-obs}), $\{q_1,\dots,q_{r-1},q_r\}$ is an edge of $H_k'$ of color~$j$ for every $q_{r-1}<q_r\leq N'$.
			
			The total number of rows of $P_1',\dots,P_{k-1}',P_{k}''$ is $R-1$, so by our induction hypothesis, there exists a set $C \subset [N']$ such that $|C| \ge \frac{1}{3^{R-1}}N'^{\frac{1}{R-k}}>\frac{1}{3^{R-1}}N^{\frac{1}{R-k+1}}$, and $C$ is a monochromatic clique $H_1',\dots,H_{k-1}',H_{k}''$, finishing this case.
			
			\item[Case 2.]  \emph{There exists some $\ell \in [k] \setminus L$ and $q\in [N]$ such that $s_{\ell,q}-q>N^{\frac{R-k}{R-k+1}}$.}
			
			This case can be handled similarly as the former with the following differences: we set $P_t'$ to be the submatrix of $P_t$ with columns indexed from $q$ to $s_{\ell,q}-2$, and remove the first row of~$P_\ell'$.
			
			\item[Case 3.] \emph{For all $\ell \in L, q\in [N]$, we have $q-s_{\ell,q}\leq N^{\frac{R-k}{R-k+1}}$; and for all $\ell \in [k] \setminus L, q \in [N]$, we have that $s_{\ell,q}-q\leq N^{\frac{R-k}{R-k+1}}$.}
			
			Let $z=\left\lceil N^{\frac{R-k}{R-k+1}}\right\rceil+2$, then $C=\{iz:i=2,\dots, \lfloor N'/z\rfloor -1 \}$ is a monochromatic clique in $H_{\ell}$ of color $r_\ell$ if $\ell \in L$, and $C$ is monochromatic of color $1$ in $H_{\ell}$ if $\ell \in [k] \setminus L$.
			
			Indeed, let $\ell \in L$, $2\leq p_1<p_2<\dots<p_r\leq N'/z-1$ and let $q_i=p_iz\in C$ for $i\in [r]$. Then for $i\in [r_\ell-1]$, we have 
			$$P_\ell(i,q_i)\leq P_\ell(i,q_{r_\ell}-z)\leq P_\ell(i,s_{\ell,q_{r_{\ell}}-1}-1)\leq P_\ell(r_\ell,q_{r_\ell}-1)-1\leq P_\ell(r_\ell,q_{r_\ell})-2,$$
			which shows that $\{q_1,\dots,q_{r_\ell}\}$ is indeed an edge of color $r_{\ell}$ in $H_{\ell}$.
			
			Similarly, let $\ell \in [k]\setminus L$, $2\leq p_1<p_2<\dots<p_{r_{\ell}}\leq N'/z-1$ and let $q_i=p_iz\in C$ for $i\in [r]$. Then for $i = 2, \dots, r_\ell$, we have 
			$$P_\ell(i,q_i)\leq P_\ell(i,q_1+z)\leq P_\ell(i,s_{\ell,q_1+1}+1)\leq P_\ell(1,q_1+1)-1\leq P_\ell(1,q_1)-2,$$
			which shows that $\{q_1,\dots,q_{r_\ell}\}$ is indeed an edge of color $1$ in $H_{\ell}$.
			
			As $|C|\geq N/(N^{\frac{R-k}{R-k+1}}+3)-3> \frac{1}{3^{R}}N^{\frac{1}{R-k+1}}$ (if $N$ is sufficiently large), we are done in this case as well.
		\end{description}
		
		Now let us assume that ($*$) does not necessarily  hold. 
		Say that an index $\ell\in [k]$ is \emph{bad} if there exists $q\in [N]$ such that all elements of the column $P_{\ell}(.,q)$ are smaller than $h_\ell + 4$. 
		Let $b$ be the number of bad indices. 
		Note that $0 \le b \le k$.
		We show that there exists $C\subset [N]$ such that $C$ is a monochromatic clique in $H_1,\dots,H_k$ and $|C|\geq \frac{1}{3^{R+b}}N^{\frac{1}{R-k+1}}$. 
		We proceed by induction on $b$. 
		The base case $b=0$ was proved above, so let us assume that $b\geq 1$. 
		
		Without loss of generality, we can assume that $k$ is bad. Let $S \subset [N]$ be the set of indices $q$ such that all elements of the column $P_k(\cdot,q)$ is smaller than $h_k+4$. If $|S| \ge \frac{N}{2}$, set $P_t'$ to be the submatrix of $P_t$ formed by the columns indexed by $S$, for all $t \in [k]$. Then, all elements of $P_k'$ are bounded above by $h_k+3$. Furthermore, delete the first three columns and the last three columns of each matrix $P_t'$, resulting in the matrix $P_t''$. Let $H_t''$ be the domination hypergraph of $(P_t'',h_t)$, and let $N''\geq N/2-6$ be the the number of columns of the matrix $P_t''$. Then, every element of $P_k''$ is upper bounded by $h_k$, i.e. $H_k''$ itself is a monochromatic clique of color 0. By our induction hypothesis, there exists $C \subset [N'']$ such that $C$ is a monochromatic clique in $H_1'', \dots, H_{k-1}''$ and $$|C| \ge \frac{1}{3^{R-r_k+b-1}}N''^{\frac{1}{R-r_k-(k-1)+1}} \ge \frac{1}{3^{R+b}} N^{\frac{1}{R-k+1}}.$$
		So in this case, we are done.
		If $|S| < \frac{N}{2}$, set $P_t'$ to be the submatrix of $P_t$ formed by the columns indexed by $[N]\setminus S$, and let $H_t'$ be the corresponding domination hypergraph of $(P_t',h_t)$ for all $t \in [k]$.
		Let $N' = N-|S|$.
		Then, for every $q \in [N']$, there exists some element of the column $P_k'(\cdot, q)$ larger than or equal to $h_k + 4$, so the number of bad indices with respect to the new matrices $P_1',\dots,P_k'$ is at most $b-1$. Hence, by our induction hypothesis, there exists $C\subset [N']$ such that $C$ is a monochromatic clique in $H_1',\dots,H_k'$, and $|C|\geq \frac{1}{3^{R+b-1}}N'\geq \frac{1}{3^{R+b}}N$. This finishes the proof.
	\end{proof}
	
	We finish this section by showing that the exponent in Lemma \ref{lemma:domination2} is the best possible for every~$k$ and every~$k$-tuple of positive integers $\mathbf{r} = (r_1, \dots, r_k)$ up to the value of $c$.
	
	\begin{claim}
		Let $k$ and $r_1,\dots,r_k$ be positive integers. 
		Then for infinitely many $N$, there exist $k$ row-increasing matrices  $P_1\in\mathbb{Z}^{r_1\times N},\dots,P_k\in\mathbb{Z}^{r_k\times N}$ and integer thresholds $h_1,\dots,h_k$ such that if~$C$ is a monochromatic clique in the domination hypergraph $H_i$ of $(P_i,h_i)$ for $i\in [k]$, then $$|C|<R\cdot N^{\frac{1}{R-k+1}}+2R,$$
		where $R=\sum_{i=1}^{k}r_i$.
	\end{claim}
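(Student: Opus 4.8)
The plan is to exhibit an explicit, highly rigid family of matrices for which the colour analysis of the domination hypergraph is essentially immediate. Put $D:=R-k+1$, take $N=n^{D}$ for an arbitrary positive integer $n$ (these are the infinitely many admissible $N$), let $m:=n^{D-1}=N/n$, and partition $[N]$ into the $n$ consecutive blocks of length $m$; write $b(q):=\lfloor q/m\rfloor$ for the index of the block containing $q$. For every $i\in[k]$ and every row $j\in[r_i]$ I would set
$$P_i(j,q):=b(q)-(r_i-j),\qquad h_i:=-1,$$
so that each $P_i$ is a non-decreasing integer step matrix in which, within any single column, the rows satisfy $P_i(1,q)<\dots<P_i(r_i,q)$ with consecutive gaps equal to $1$, and every row increases by exactly $1$ at each block boundary.

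The next step is to read off the monochromatic cliques of the domination hypergraph $H_i$. Since $b(q)\ge 0>h_i$, no edge has colour $0$. For $1\le j<r_i$, a colour-$j$ edge $q_1<\dots<q_{r_i}$ would require $P_i(j,q_j)\ge 2+P_i(r_i,q_{r_i})$, i.e. $b(q_j)\ge b(q_{r_i})+2+(r_i-j)$, which is impossible because $q_j<q_{r_i}$ forces $b(q_j)\le b(q_{r_i})$; hence colours $0,\dots,r_i-1$ produce no edges at all, and a clique of such a colour has fewer than $r_i$ vertices. For colour $r_i$, the defining inequality $P_i(r_i,q_{r_i})\ge 2+\max_{l<r_i}P_i(l,q_l)$ has its right-hand side attained at $l=r_i-1$, so it is equivalent to $b(q_{r_i})\ge b(q_{r_i-1})+1$. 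Consequently, in a colour-$r_i$ clique $c_1<\dots<c_t$ every window of $r_i$ consecutive vertices has its last two in distinct blocks, so $b(c_{r_i-1})<b(c_{r_i})<\dots<b(c_t)$; since there are at most $n+1$ possible block indices, $t\le n+r_i$. Combining the cases, every monochromatic clique of each $H_i$ has fewer than $n+r_i$ vertices, whence a common monochromatic clique $C$ satisfies $|C|\le n+\min_i r_i\le N^{1/D}+R< R\,N^{1/D}+2R$, which is the desired bound. I would carry the argument out precisely in this order: fix the block scale, write down the matrices and thresholds, dispose of colours $0,\dots,r_i-1$, and then bound colour-$r_i$ cliques by the strictly increasing block indices.

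The one genuinely delicate point — and the main obstacle I foresee — is that the rows above are only non-decreasing, whereas "row-increasing" demands strict monotonicity, and this cannot be repaired by a generic perturbation: a strictly increasing integer sequence of length $N$ has range at least $N-1$, which is incompatible with the "almost constant on each block" behaviour of $P_i(r_i,\cdot)$ that the colour-$r_i$ bound relies on. For $r_i\le 2$ there is no difficulty — e.g. the strictly increasing matrix $P_i(1,q)=q+2m$, $P_i(2,q)=q$ with $h_i$ chosen so that $\{q:P_i(1,q)\le h_i\}$ is a length-$n$ prefix already realises the bound — and this case indicates how one should layer several scales to handle the general $r_i$; making the step-function skeleton genuinely row-increasing in that way (or, failing that, reading the monotonicity hypothesis non-strictly, in which case the statement matches Lemma~\ref{lemma:domination2} exactly) is the part that needs care. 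Everything else is the short colour analysis sketched above.
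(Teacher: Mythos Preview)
Your construction does not meet the hypothesis of the claim---the rows are only weakly increasing---and you correctly diagnose that this cannot be repaired by a small perturbation. What you do not seem to realise is that the whole ``each $H_i$ independently forces $|C|\lesssim n$'' strategy is incompatible with strict monotonicity. Indeed, apply Lemma~\ref{lemma:domination2} with $k=1$ to a single row-increasing matrix $P_i\in\mathbb{Z}^{r_i\times N}$: it guarantees a monochromatic clique in $H_i$ of size at least $c\,N^{1/r_i}$, which for $k\ge 2$ (and $r_j\ge 2$) is much larger than $N^{1/(R-k+1)}$. Hence no choice of $P_i$ can, on its own, bound $|C|$ by $O(N^{1/(R-k+1)})$; the matrices must interact. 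Your block construction only appears to beat this because constant-on-blocks rows fall outside the scope of the upper-bound lemma.

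The paper carries out exactly the ``layer several scales'' idea you gesture at, but layers the scales \emph{across} the matrices rather than within each one separately. With $N=n^{R-k+1}$, it sets $\tau(\ell)=-\ell+\sum_{t<\ell}r_t$ and
\[
P_\ell(i,q)=q-n^{\tau(\ell)+i},\qquad h_\ell=-N-4.
\]
Every row is trivially strictly increasing, there are no colour-$0$ edges, and the exponents $\tau(\ell)+i$ sweep through $0,1,\dots,R-k$ as $(\ell,i)$ varies, with consecutive matrices sharing one scale at the seam $\tau(\ell)+r_\ell=\tau(\ell+1)+1$. The analysis then combines two observations for a putative common clique $C=\{a_1<\dots<a_m\}$: if $C$ has colour $c_\ell>1$ in $H_\ell$, then no $r_\ell$ elements of $C$ lie in an interval of length $n^{\tau(\ell)+c_\ell}$; and if $C$ has colour $c_\ell<r_\ell$, then after discarding $R$ vertices from each end, $C$ is contained in an interval of length $n^{\tau(\ell)+c_\ell+1}$. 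Playing a containment bound from one matrix against a spacing bound from the adjacent one (the shared scale is what makes them mesh) forces $|C|\le Rn+2R$. This cross-matrix coordination of scales is the missing ingredient in your sketch; your $r_i\le 2$ fix is the $k=1$ special case of it, but the general statement genuinely needs the interaction.
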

	
	\begin{proof}
		We may assume that $r_\ell \ge 2$ for all $\ell$ as adding matrices with a single row only increases the upper bound. Let $n$ be a positive integer and set $N=n^{R-k+1}$. Also, for $\ell=1,\dots,k$, let $\tau(\ell)=-\ell+\sum_{t<\ell}r_{t}$, and for $i\in [r_{\ell}]$ and $q\in [N]$, define 
		$$P_{\ell}(i,q)=q-n^{\tau(\ell)+i}.$$
		We set each threshold to be $h_{\ell}=-N-4$, so the domination hypergraph $H_{\ell}$ of $(P_{\ell},h_{\ell})$ has no edge of color $0$. Let $1\leq a_1< \dots<a_m$ be such that $C=\{a_1,\dots,a_m\}$ is a monochromatic clique in $H_{\ell}$ of some color $c_{\ell}\in [r_{\ell}]$ for $\ell\in [k]$. We show that $|C|\leq Rn+2R$. Note the following.
		
		\begin{itemize}
			\item[(a)] If $c_{\ell}>1$, $C$ cannot contain $r_{\ell}\leq R$ elements in an interval of length $n^{\tau(\ell)+c_{\ell}}$. Indeed, such a set would form an edge of color less than $c_{\ell}$ in $H_{\ell}$.
			
			\item[(b)] If $c_{\ell}<r_{\ell}$ and $m \ge 2R$, then $a_{m-R+1}-a_{R}\leq n^{\tau(\ell)+c_{\ell}+1}$. 
			Otherwise, $$\{a_1,\dots,a_{c_\ell},a_{m-r_\ell+c_\ell+1},\cdots, a_m\}$$ is an edge of color at least $c_{\ell}+1$ in $H_{\ell}$.
		\end{itemize}

		By (a) and (b), if $1<c_{\ell}<r_{\ell}$ for some $\ell\in [k]$, then we must have $|C|-2R<R n$, which implies $|C|\leq Rn+2R$. Therefore, assume that for every $\ell\in [k]$, we have $c_{\ell}\in \{1,r_{\ell}\}$. 
		Let $j\in [k]$ be the smallest integer such that $c_j=1$, and set $j=k+1$ if there is no such index. 
		If $j=1$, then $a_{m-R+1}-a_{R}\leq n$ by (b) applied to $\ell=1$, so $|C|\leq n+2R$. 
		If $j=k+1$, then no $r_k$ elements are contained in an interval of length $n^{R-k}$ by (a) applied to $\ell=k$, so $|C|\leq RN/n^{R-k}=Rn$. Otherwise, we can apply (b) with $\ell=j$ and (a) with $\ell=j-1$ to get that $a_{m-R+1}-a_R\leq n^{\tau(j)+2}$, and no interval of length $n^{\tau(j-1)+r_j}=n^{\tau(j)+1}$ contains more than $R$ elements of $C$. From this, we get $|C|\leq Rn+2R$.
	\end{proof}
	
	\subsection{Proof of Theorem \ref{thm:linear}}
	
	In this section, we finish the proof of Theorem \ref{thm:linear}. Given a matrix $M$ with $kr$ rows and $\ell\in [k]$, the \emph{$\ell$th block of $M$} is the submatrix of $M$ with $r$ rows indexed $(\ell-1)r+1,\dots,\ell r$.
	
	\begin{proof}[Proof of Theorem \ref{thm:linear}]
		Let $H$ be an $r$-uniform semi-linear hypergraph of complexity $(d,m)$ on $N$ vertices. By Lemma \ref{lemma:primitive}, there exists $k=2m$ such that $H$ has primitive complexity $k$. Let $M\in \mathbb{R}^{rk\times N}$ be a matrix witnessing $H$, and for $\ell=1,\dots,k$, let $H_{\ell}$ be the primitive semi-linear hypergraph witnessed by the $\ell$-th block of $M$ with $V(H_\ell)=V(H)$.  Then $H$ is the Boolean combination of $H_1,\dots,H_k$.
		
		By Lemma \ref{lemma:convex}, $M$ contains an $rk\times N_0$ sized cupcap submatrix $M_0$, where $N_{0}=c_0(\log N)^{1/(rk)}$ for some $c_0=c_0(r,k)>0$.
		Now set $\Delta=2r$, $z=\lceil \log_2 \Delta\rceil$ and $N_1=\lfloor (N_0-1)/z\rfloor$. Let $M_1$ be the $rk\times N_1$ sized submatrix of $M_0$ in which we keep the columns indexed by $z,2z,\dots,N_1z$. Then by Lemma \ref{lemma:exponential}, for each row $i\in [rk]$ there exist $s_i\in\mathbb{R}$ such that the shifted sequence of the $i$-th row, that is, $(M_1(i,j)+s_i)_{j\in [N_1]}$ is $\Delta$-exponential. Note that $M_1$ is a witness for some subhypergraph $H'$ of $H$. For simplicity, assume that the vertex set of $H'$ is $[N_1]$ with the natural ordering. Also, for $\ell\in [k]$, let $P_{\ell}'$ be the $\ell$-th block of $M_1$, and let $H_{\ell}'$ be the hypergraph  on vertex set $[N_1]$ witnessed by $P_{\ell}'$. Then $H'$ is a Boolean combination of $H_1',\dots,H_{k}'$.
		
		For $\ell\in [k]$, let $Q_{\ell}$ be the $r\times N_1$ sized matrix defined as $Q_{\ell}(i,j)=P_{\ell}'(i,j)+s_{(\ell-1)r+i}$, and set $S_{\ell}=\sum_{i=(\ell-1)r+1}^{\ell r}s_i$. 
		Furthermore, define the integer matrices $L_{\ell}\in\mathbb{Z}^{r\times N_1}$ such that $L_{\ell}(i,j)=\lfloor\log_{\Delta} |Q_{\ell}(i,j)|\rfloor$, and set $h_{\ell}=\lfloor \log_{\Delta}|S_\ell|\rfloor-2$ with convention to set $h_\ell=-\infty$ if $S_\ell=0$, which will cause no problem in further discussion.
		
		First of all, observe that as every row of $Q_{\ell}$ is $\Delta$-exponential, $L_{\ell}$ is row-monotone. Let $K_{\ell}$ be the domination hypergraph of $(L_{\ell},h_{\ell})$. We show that if $e\in K_{\ell}$, then whether $e$ is an edge of $H_{\ell}'$ is determined by the color of $e$ in $K_{\ell}$. By definition, if $i\in [r]$ and $j\in [N_1]$, we have 
		$$\Delta^{L_{\ell}(i,j)}\leq |Q_{\ell}(i,j)|< \Delta^{L_{\ell}(i,j)+1}.$$
		Write $e=\{q_1,\dots,q_r\}$ with $1\leq q_1<\dots<q_{r}\leq N_1$.  We have $e\in H'_{\ell}$ if and only if 
		$$0>\sum_{i=1}^{r}P'_{\ell}(i,q_i)=-S_{\ell}+\sum_{i=1}^{r}Q_{\ell}(i,q_i).$$
		If $e$ has color 0 in $K_{\ell}$, then $L_{\ell}(1,q_1),\dots,L_{\ell}(r,q_r)$ are at most $h_{\ell}$. Hence,
		$$\left|\sum_{i=1}^{r}Q_{\ell}(i,q_i)\right|< \sum_{i=1}^{r}\Delta^{L_{\ell}(i,j)+1}\leq r\cdot  \Delta^{h_{\ell}+1}<|S_{\ell}|.$$
		Therefore, $e\in E(H_{\ell}')$ if and only if $S_{\ell}>0$. Now suppose that $e$ has color $j$ for some $j\in [r]$ in $K_{\ell}$. 
		Then $L_{\ell}(j,q_{j})\geq 2+L_{\ell}(i,q_{i})$ for every $i\in [r]\setminus \{j\}$ and $L_{\ell}(j,q_{j})\geq h_{\ell}+4$. But then
		$$\left|-S_{\ell}+\sum_{i\in [r]\setminus \{j\}}Q_{\ell}(i,q_i)\right|< \Delta^{h_{\ell}+3}+ \sum_{i\in [r]\setminus \{j\}}\Delta^{L_{\ell}(i,q_i)+1}\leq r\cdot \Delta^{L_{\ell}(j,q_j)-1}<|Q_{\ell}(i,j)|. $$
		Therefore, $e\in E(H'_{\ell})$ if and only if $Q_{\ell}(i,j)<0$. But the $j$-th row of $Q_{\ell}$ is $\Delta$-exponential, so in particular every element has the same sign. Thus, $e\in E(H'_{\ell})$ is determined by the color of $e$ in $K_{\ell}$.
		
		Now let us apply Lemma \ref{lemma:domination} to $L_1,\dots,L_k$ and $h_1,\dots,h_k$. Then there exists $C\subset [N_1]$ such that $C$ is a monochromatic clique in $K_{\ell}$ for every $\ell\in[k]$ with $|C|\geq c_1N_1^{\beta}$ where $\beta=\frac{1}{rk-k+1}$ and $c_1=c_1(r,k)>0$. But then  $C$ is either a clique or an independent set in $H'_{\ell}$, which implies that $C$ is either a clique or an independent set in any Boolean combination of $H_1',\dots,H_{k}'$, so in particular, in $H'$. Here,
		$$|C|\geq c_1N_1^{\beta} = c_1\left\lfloor\frac{N_0}{z}\right\rfloor^\beta \geq c_1\left(\frac{N_0}{2z}\right)^{\beta}\geq \frac{c_1c_0^{\beta}(\log N)^{\beta/(rk)}}{(2z)^{\beta}}> c_2 (\log N)^{\frac{1}{rk(rk-k+1)}}$$
		for some $c_2=c_2(r,k)>0$. As this holds for every semi-linear hypergraph of complexity $(d,m)$, we derive that $R_{r}^{d,1,m}(n)\leq 2^{cn^{rk(rk-k+1)}}<2^{cn^{4r^{2}m^{2}}}$ for some $c=c(r,d,m)>0$, finishing the proof.
	\end{proof}
	
	The proof of Theorem \ref{thm:multicolor} follows the exact same lines, so let us give only a brief sketch.
	
	\begin{proof}[Proof of Theorem \ref{thm:multicolor}]
		Let $H^{0}_1,\dots,H^{0}_{p}$ be the colorclasses of $H$, then $H^{0}_{\ell}$ is semi-linear of complexity $(d,m)$ for $\ell\in [p]$. Therefore, $H^{0}_{\ell}$ is the Boolean combination of $2m$ primitive semi-linear hypergraphs by Lemma \ref{lemma:primitive}, let $H_{2m(\ell-1)+1},\dots,H_{2m\ell}$ be such hypergraphs. Let $k=2mp$, then there exists $C\subset V(H)$ such that $|C|\geq (\log N)^{c}$ for some $c=c(r,k)>0$ and $C$ is either a clique or an independent set in $H_{\ell}$ for $\ell\in [k]$, by the same argument as in the proof of Theorem \ref{thm:linear}. But then $C$ is a monochromatic clique in $H$, finishing the proof.
	\end{proof}
	
	\subsection{Constructions --- Proof of Theorem \ref{thm:lin_constr}}
	
	
	
		
	
	In this section, we prove Theorem \ref{thm:lin_constr}. Clearly, it is enough to prove the statement in case $r$ is even, so let us write $r=2q+2$, where $q\geq 1$ is an integer.
	
	We describe a family of $r$-uniform hypergraphs, whose members depend on the real parameters $s_1>\dots>s_q\geq 10^{6}$.
	Set $t_0=1$ and $t_{i}=s_i t_{i-1}$ for $i=1,\dots,q$. For $\ell=0,\dots,q$, define the linear functions $f_{\ell}:\mathbb{R}^{2\ell+2}  \mapsto \mathbb{R}$ such that 
	$$f_{\ell}(x_0,\dots,x_{\ell},y_{\ell},\dots,y_0)=\sum_{i=0}^{\ell}(-1)^{i}t_i(y_i-x_i).$$
	We define the $r$-uniform hypergraph $H=H(s_1,\dots,s_q)$ as follows. 
	The vertex set of $H$ is $[N]$, where we assume $N> s_1$, and given an $r$-tuple $e$ of increasing integers in $[N]$, $e$ is an edge of $H$ if $f_q(e)>0$. Clearly, $H$ is semi-linear of complexity $(1,1)$.
	
	Let us bound the clique and independence number of $H$. 
	\begin{lemma}\label{lemma:clique}
		$$\omega(H)\leq 10\left(s_q+2+\frac{\log N}{\log s_1}+\sum_{i=1}^{\lfloor (q-1)/2\rfloor }\frac{\log s_{2i}}{\log s_{2i+1}}\right).$$
	\end{lemma}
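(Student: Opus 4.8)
The plan is to show that a clique $S$ in $H=H(s_1,\dots,s_q)$ must be ``compressible'' in a strong sense, by repeatedly deleting the two extreme elements of the current set and \emph{descending one level of the defining polynomial} each time. Write $f_\ell^{(u_1,\dots,u_\ell)}$ for the analogue of $f_\ell$ in which the parameters $s_1,\dots,s_\ell$ are replaced by arbitrary reals $u_1,\dots,u_\ell$, so that $f_q=f_q^{(s_1,\dots,s_q)}$. The single identity driving everything is
$$f_\ell^{(u_1,\dots,u_\ell)}(x_0,\dots,x_\ell,y_\ell,\dots,y_0)=(y_0-x_0)-u_1\, f_{\ell-1}^{(u_2,\dots,u_\ell)}(x_1,\dots,x_\ell,y_\ell,\dots,y_1),$$
which is immediate from $t_i=s_1\cdots s_i$. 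Hence, if $S$ is a clique in $H(u_1,\dots,u_\ell)$ and $e$ is an $r$-subset of $S$ whose least and greatest elements are $\min S$ and $\max S$, then the clique inequality $f_\ell^{(u_1,\dots,u_\ell)}(e)>0$ says precisely that $f_{\ell-1}^{(u_2,\dots,u_\ell)}$, evaluated on the remaining $2\ell$ elements, is less than $(\max S-\min S)/u_1$.

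Iterating this is the heart of the argument. Removing $j$ extreme elements from each side of $S$ produces a set $S^{(j)}$ on which $f_{q-j}^{(s_{j+1},\dots,s_q)}$ obeys, on every $(2(q-j)+2)$-subset, an inequality whose direction alternates with the parity of $j$: for $j$ odd it is an \emph{upper} bound (directly from the previous clique inequality), and for $j$ even it is a \emph{positive lower bound}, or \emph{margin}, obtained by substituting the previous upper bound into the identity above with the extreme elements of $S^{(j-1)}$. The right object to induct on is therefore a set $X\subseteq\mathbb{Z}$ for which every $(2\ell+2)$-subset $e$ satisfies $f_\ell^{(u_1,\dots,u_\ell)}(e)>\beta$ for some $\beta>0$, together with a bound $\operatorname{span}(X)\le\rho\beta$ on the span-to-margin ratio; one proves by induction on $\ell$ (descending in steps of two, and on $\rho$ within a fixed $\ell$) that $|X|\le 10\big(u_\ell+2+\log_{u_1}\rho+\sum_{i\ge1}\log_{u_{2i+1}}u_{2i}\big)$. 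The key structural fact is that two peels of such an $X$ either terminate it (the small-span case, see below) or turn it into the same kind of object with $\ell$ replaced by $\ell-2$, the parameter tuple shifted to $u_3,\dots,u_\ell$, and — crucially — a new ratio bounded by $2u_2$, because whenever the span does not collapse the new margin is at least $\tfrac12\operatorname{span}(\cdot)/u_2$. Thus at each descent the ``$N$''-like quantity is replaced by $2u_2\le 2s_{2i}$, which is exactly what produces the terms $\log_{s_{2i+1}}s_{2i}$ rather than $\log_{s_{2i+1}}N$.

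Two base cases end the descent. If one reaches an upper bound $f_1^{(s_q)}<B$ on all $4$-subsets of a set whose span exceeds $2B$, then one more peel forces every consecutive gap of the interior to exceed $\operatorname{span}/(2s_q)$, so the set has at most $2s_q+O(1)$ elements — the source of the additive $s_q$. If instead one reaches a lower bound $f_1^{(s_q)}>b$ with span at most $2s_{q-1}b$, then a single peel shrinks the span by a factor $s_q$ while preserving the margin $b$, and once the margin is comparable to the span the set has $O(1)$ elements; hence this recursion stops after $\le\log_{s_q}(2s_{q-1})+O(1)$ steps — the source of $\log_{s_q}s_{q-1}$. Finally, the whole clique is fed in by one preliminary peel: $S$ is first turned into ``$f_{q-1}^{(s_2,\dots,s_q)}<D/s_1$ on all subsets'' where $D=\operatorname{span}(S)$, and then either the span of $S\setminus\{\min,\max\}$ is below $2D/s_1$ — in which case that set is again a clique in $H$ of span smaller by a factor $\ge s_1/2$, and we recurse on the Lemma itself — or it is not, and we enter the inductive object above with $\rho$ essentially $2s_2$. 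Unwinding the outer recursion contributes exactly $\tfrac{\log N}{\log s_1}$ up to a constant absorbed by $s_1\ge10^6$, and collecting all contributions yields the claimed bound with room to spare inside the constant $10$.

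The main obstacle is the bookkeeping in the previous two paragraphs, specifically rendering the ``small-span'' branches harmless. One must check that whenever a span collapses — whether at the outermost level or at a deeper even level — the resulting short-span set is controlled by the \emph{current margin} (hence by $2u_{2i}\le 2s_{2i}$) rather than merely by $N$; arranging this simultaneously with the correct parities, the correct shift of the parameter tuple, and additive $O(q)=O(1)$ losses that remain inside $10(\cdots)$ is the entire difficulty. Each individual step, by contrast, is a one-line application of the identity above together with the trivial observation that few pairwise-far points fit in a short interval, so once the inductive statement is set up correctly the verification is routine; this is also where the hypotheses $s_i\ge10^6$ and $s_1>\dots>s_q$ enter, to swallow all the factors of $\log 2$ and the gap between $\log s$ and $\log(s/2)$.
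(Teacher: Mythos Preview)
Your approach is sound and rests on the same recursive identity the paper implicitly uses, but the organization is genuinely different. The paper does not set up an abstract inductive hypothesis on sets with a margin and a span-to-margin ratio; instead it passes immediately to the symmetric differences $b_i=a_{n+1-i}-a_i$, greedily selects indices $k_0<k_1<\cdots<k_p$ at which either $b_{k_j-1}<\tfrac{s_{2j+1}}{2}b_{k_j}$ or $2s_{2j}b_{k_j-1}<b_{k_{j-1}}$, proves by a short induction the telescoping estimate $\sum_{i\le\ell}(t_{2i}b_{k_i-1}-t_{2i+1}b_{k_i})<-\tfrac{t_{2\ell+1}}{2}b_{k_\ell}$, and then exhibits a single explicit non-edge built from the $a_{k_j-1},a_{k_j}$ and their mirrors (adding one close pair $a_b,a_{b+1}$ when $q$ is even). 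Your two branches correspond exactly to the paper's alternatives (a) and (b), and the number of Branch~2 steps you take at a given level is the paper's gap $k_j-k_{j-1}$; the paper's route is shorter because it never names the intermediate object, while yours makes the two-level descent structure more transparent. One point to tighten in your write-up: your stated inductive bound needs $\log_{u_1}\rho$ to absorb the $+4$ lost in a Branch~1 step, which can fail once repeated Branch~2's have driven $\rho$ below $\sqrt{u_1}$; you will want either an additive $O(1)$ per level (harmless, since $q$ is fixed and the ``$+2$'' in the statement provides slack), or to observe that once $\rho$ is that small a single further peel already forces the collapse you need.
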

	
	\begin{proof}
		Let $p=\lfloor (q-1)/2\rfloor$, and assume that $n=\omega(H)> 10\left(s_q+2+\frac{\log N}{\log s_1}+\sum_{i=1}^{p}\frac{\log s_{2i}}{\log s_{2i+1}}\right).$
		
		Let $a_1<\dots<a_n$ be the vertices of a clique. 
		Furthermore, let $m=\lfloor n/2\rfloor-s_q-1$, and let $b_{i}=a_{n+1-i}-a_i$ for $i\in [m]$. 
		Then $b_1>\dots>b_m\ge 1$. 
		Given $1\leq j_0<\dots<j_q\leq m$, the condition that $\{a_{j_0},\dots,a_{j_q},a_{n+1-j_q},\dots,a_{n+1-j_0}\}$ is an edge of $H$ is equivalent to the inequality
		\begin{equation}\label{equ:clique}
			\sum_{i=0}^{q}(-1)^{i}t_ib_{j_i} >0.
		\end{equation}
		Let $k_0\geq 2$ be the smallest index such that $b_{k_0-1}<\frac{s_1}{2} b_{k_0}$. 
		Then for $1<i<k_0$, we have $b_{i-1}\geq \frac{s_1}{2}b_{i}$, which implies $(\frac{s_1}{2})^{k_0-2}\leq b_{k_0-1}(\frac{s_1}{2})^{k_0-2}\leq b_1< N$, hence $k_0\leq 2+\frac{\log N}{\log (s_1/2)} < 5\frac{\log N}{\log s_1}$. 
		Now for $j=1,\dots,p$, define $k_j$ as follows. If $k_{j-1}$ is already defined, let $k_j>k_{j-1}+1$ be the smallest index such that either
		\begin{enumerate}
			\item[(a)] $b_{k_{j}-1}<\frac{s_{2j+1}}{2}b_{k_j}$ or
			\item[(b)] $2s_{2j}b_{k_j-1}<b_{k_{j-1}}$.
		\end{enumerate}   
		Then for $k_{j-1}+1 < i<k_j$, we have by (a) that $b_{i}(\frac{s_{2j+1}}{2})^{i-k_{j-1}-1}\leq b_{k_{j-1}+1} < b_{k_{j-1}}$. 
		Hence, by (b), we deduce that $(\frac{s_{2j+1}}{2})^{k_j-k_{j-1}-3}<2s_{2j}$, which implies $k_j-k_{j-1}\leq 3+\frac{\log (2s_{2j})}{\log (s_{2j+1}/2)}<5\frac{\log s_{2j}}{\log s_{2j+1}}$. Then $$k_j<5\,\frac{\log N}{\log s_1}+5\sum_{i=1}^{j}\frac{\log s_{2i}}{\log s_{2i+1}}<m,$$ which shows that the indices $k_0,\dots,k_p$ are well defined. Next, we prove by induction that for $\ell=0,\dots,p$, we have
        \begin{equation}\label{eq: telescope}
          \sum_{i=0}^{\ell}(t_{2i}b_{k_i-1}-t_{2i+1}b_{k_i}) < -\frac{t_{2\ell+1}}{2}b_{k_{\ell}}.
        \end{equation}
		In the base case $\ell=0$, the left hand side is $b_{k_0-1}-t_1 b_{k_0}$, so noting that $t_1=s_1$ and using the inequality $b_{k_0-1}<\frac{s_1}{2}b_{k_0}$, the  desired inequality
		$$b_{k_0-1}-t_1 b_{k_0}<-\frac{t_1}{2}b_{k_0}$$
		is satisfied. Now let us assume $\ell>0$, and consider two cases.
		\begin{description}
			\item[Case 1.] $b_{k_{\ell}-1}<\frac{s_{2\ell+1}}{2}b_{k_{\ell}}$.
			
			By our induction hypothesis, we can upper bound the contribution of the first $\ell$ terms by 0, so we can write
			$$\sum_{i=0}^{\ell}(t_{2i}b_{k_i-1}-t_{2i+1}b_{k_i}) < t_{2\ell}b_{k_\ell-1}-t_{2\ell+1}b_{k_\ell}<-\frac{t_{2\ell+1}}{2}b_{k_{\ell}}.$$
			Here, the last inequality holds by the fact that $t_{2\ell+1}=s_{2\ell+1}t_{2\ell}>0    $.
			\item[Case 2.] $2s_{2\ell}b_{k_\ell-1}<b_{k_{\ell-1}}$.
			
			By our induction hypothesis, we can upper bound the contribution of the first $\ell$ terms by $-\frac{t_{2\ell-1}}{2}b_{k_{\ell-1}}$, so we get
			$$\sum_{i=0}^{\ell}(t_{2i}b_{k_i-1}-t_{2i+1}b_{k_i}) < -\frac{t_{2\ell-1}}{2}b_{k_{\ell-1}}+t_{2\ell}b_{k_\ell-1}-t_{2\ell+1}b_{k_\ell}<-t_{2\ell+1}b_{k_{\ell}}<-\frac{t_{2\ell+1}}{2}b_{k_{\ell}}.$$
			Here, the second inequality holds by the fact that $s_{2\ell}t_{2\ell-1}=t_{2\ell}>0$.
		\end{description}
		If $q$ is odd, (\ref{eq: telescope}) with $\ell=p$ shows that the set $\{b_{k_j-1},b_{k_j}:j=0,\dots,p\}$ violates (\ref{equ:clique}), a contradiction. 
		So suppose that $q$ is even. Writing $e_0$ for the $(r-2)$-tuple we get by increasingly ordering the set $ \{a_{k_i-1},a_{k_i},a_{n+1-k_i},a_{n+2-k_i}:i=0,\dots,p\}$, (\ref{eq: telescope}) with $\ell=p$ implies
		$$f_{q-1}(e_0)=\sum_{i=0}^{p}(t_{2i}b_{k_i-1}-t_{2i+1}b_{k_i})<-\frac{t_{q-1}}{2}b_{k_{p}}=-\frac{t_{q-1}}{2}(a_{n+1-k_{p}}-a_{k_p}).$$
		As $k_p<m\leq\frac{n}{2}-s_q-1$, there are at least $2s_q+1$ integers between $k_p+1$ and $n-k_p$. 
		Hence, there is an index $b \in \{k_p+1,k_p+2,\dots,n-k_p-1\}$ such that $(a_{b+1}-a_b)\leq \frac{a_{n-k_p}-a_{k_p+1}}{2s_q} < \frac{a_{n+1-k_p}-a_{k_p}}{2s_q}$, thereby $t_q(a_{b+1}-a_b) < \frac{t_{q-1}}{2}(a_{n+1-k_{p}}-a_{k_p})$. Let $e$ be the $r$-tuple we get after inserting $a_b$ and $a_{b+1}$ into $e_0$ respecting the increasing order. Then
		$$f_q(e)=f_{q-1}(e_0)+t_q(a_{b+1}-a_b)<-\frac{t_{q-1}}{2}(a_{n+1-k_{p}}-a_{k_p})+t_q(a_{b+1}-a_b)<0,$$
		contradicting that $e$ is an edge of $H$.
	\end{proof}
	
	\begin{lemma}\label{lemma:independent}
		$$\alpha(G)\leq  10\left(s_q+2+\sum_{i=1}^{\lfloor q/2\rfloor}\frac{\log s_{2i-1}}{\log s_{2i}}\right).$$
	\end{lemma}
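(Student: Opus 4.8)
The plan is to mirror the proof of Lemma~\ref{lemma:clique} with all signs reversed: instead of locating, inside a hypothetical large clique, an $r$-tuple $e$ with $f_q(e)<0$, we locate, inside a hypothetical large independent set, an $r$-tuple $e$ with $f_q(e)>0$, which then cannot lie in an independent set. So suppose $\alpha(H)=n$ exceeds the claimed bound, let $a_1<\dots<a_n$ be an independent set, set $m=\lfloor n/2\rfloor-s_q-1$ and $b_i=a_{n+1-i}-a_i$ for $i\in[m]$. Then $b_1>\dots>b_m\ge 1$, and since for every $1\le j_0<\dots<j_q\le m$ the set $\{a_{j_0},\dots,a_{j_q},a_{n+1-j_q},\dots,a_{n+1-j_0}\}$ is a non-edge, we have
$$\sum_{i=0}^{q}(-1)^{i}t_{i}b_{j_i}\le 0 .$$
We want to choose the $j_i$'s so that this alternating sum is positive.

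Where Lemma~\ref{lemma:clique} groups the sum into blocks $t_{2i}b-t_{2i+1}b$ driven negative by the thresholds $s_{2i+1}$, we keep the leading term $t_0b_{j_0}>0$ by itself and group the remaining terms into blocks $t_{2i}b-t_{2i-1}b$ that we drive positive. Set $k_0=1$, $j_0=1$, and for $i=1,2,\dots$ let $k_i>k_{i-1}+1$ be the smallest index with either (a) $b_{k_i-1}<\tfrac{s_{2i}}{2}b_{k_i}$, or (b) $2s_{2i-1}b_{k_i-1}<b_{k_{i-1}}$; put $j_{2i-1}=k_i-1$ and $j_{2i}=k_i$. Exactly as in Lemma~\ref{lemma:clique}, failure of (a) forces geometric decay of $(b_j)$ with ratio at least $s_{2i}/2$ on $(k_{i-1},k_i)$, and failure of (b) limits how far this decay can run before $b$ drops below $b_{k_{i-1}}$; together they give $k_i-k_{i-1}<5\tfrac{\log s_{2i-1}}{\log s_{2i}}$. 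The key point is that \emph{no $\log N$ term arises here}: the standalone $t_0$-block has no balancing partner, so $k_0$ can simply be $1$, which is precisely why Lemma~\ref{lemma:clique} carries the extra $\log N/\log s_1$ term while this lemma does not. Summing the gap bounds shows that $k_0,\dots,k_p$ with $p=\lfloor q/2\rfloor$ are well defined and at most $m$.

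The heart of the argument is the telescoping induction, run in the direction opposite to Lemma~\ref{lemma:clique}: one proves by induction on $\ell=0,\dots,p$ that
$$t_0b_{j_0}+\sum_{i=1}^{\ell}\bigl(t_{2i}b_{k_i}-t_{2i-1}b_{k_i-1}\bigr)>\frac{t_{2\ell}}{2}\,b_{k_\ell},$$
the base case $\ell=0$ being $b_1>\tfrac12 b_1$. In the step, case (a) gives $t_{2i}b_{k_i}-t_{2i-1}b_{k_i-1}>\tfrac{t_{2i}}{2}b_{k_i}$ using $t_{2i}=s_{2i}t_{2i-1}$ and the previous partial sum is positive, while case (b) gives $t_{2i-1}b_{k_i-1}<\tfrac{t_{2i-2}}{2}b_{k_{i-1}}$ using $t_{2i-1}=s_{2i-1}t_{2i-2}$, so the previous partial sum absorbs the negative term and the positive $t_{2i}b_{k_i}$ survives; either way the estimate propagates. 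If $q$ is even, the case $\ell=q/2$ says exactly that $\sum_{i=0}^{q}(-1)^it_ib_{j_i}>0$ for $j_0=1$, $j_{2i-1}=k_i-1$, $j_{2i}=k_i$, contradicting that the corresponding $r$-tuple is a non-edge. If $q$ is odd, apply the same estimate to $f_{q-1}$ on the $(r-2)$-tuple $e_0$ obtained by sorting $\{a_{j_i},a_{n+1-j_i}:0\le i\le q-1\}$: this yields $f_{q-1}(e_0)>\tfrac{t_{q-1}}{2}\,b_{k_p}=\tfrac{t_{q-1}}{2}(a_{n+1-k_p}-a_{k_p})$; since $k_p<m\le\tfrac n2-s_q-1$, a pigeonhole over the at least $2s_q+1$ consecutive gaps partitioning $[a_{k_p+1},a_{n-k_p}]$ gives $b\in\{k_p+1,\dots,n-k_p-1\}$ with $a_{b+1}-a_b<\tfrac{1}{2s_q}(a_{n+1-k_p}-a_{k_p})$, and inserting $a_b,a_{b+1}$ as the two innermost coordinates of $e_0$ produces an $r$-tuple $e$ with $f_q(e)=f_{q-1}(e_0)-t_q(a_{b+1}-a_b)>0$, again a contradiction. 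In either case $n$ is bounded as claimed.

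The main obstacle will be purely organizational. Every inequality of the telescoping induction now points the other way than in Lemma~\ref{lemma:clique}, and one must pair $t_{2i}$ with $-t_{2i-1}$ rather than with $-t_{2i+1}$, keep $j_0=1$ outside the blocks, and track the parity of $q$ so that the number of chosen indices ($q+1$), the summation range ($\lfloor q/2\rfloor$ rather than $\lfloor(q-1)/2\rfloor$), and the disappearance of the $\log N/\log s_1$ term all come out exactly right. Once the bookkeeping is pinned down, the remaining estimates are the same routine geometric-sequence computations already performed for the clique bound.
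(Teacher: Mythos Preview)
Your proposal is correct and follows essentially the same approach as the paper: the choice $k_0=1$, the recursive definition of $k_i$ via conditions (a) and (b) with thresholds $s_{2i}$ and $s_{2i-1}$, the telescoping inequality $b_1-\sum_{i=1}^{\ell}(t_{2i-1}b_{k_i-1}-t_{2i}b_{k_i})>\tfrac{t_{2\ell}}{2}b_{k_\ell}$, the two-case induction step, and the final parity split with the pigeonhole insertion for odd $q$ all match the paper's proof. Your added commentary on why no $\log N$ term appears and on the pairing $t_{2i}$ with $-t_{2i-1}$ is helpful exposition but does not deviate from the paper's argument.
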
	
	
	\begin{proof}
	The proof of this is very similar to the proof of Lemma \ref{lemma:clique}, so we only give an outline. 
	Let $p=\lfloor q/2\rfloor$, and assume that $n=\alpha(H)> 10\left(s_q+2+\sum_{i=1}^{p}\frac{\log s_{2i-1}}{\log s_{2i}}\right).$
		
	Let  $a_1<\dots<a_n$ be the vertices of an independent set. Furthermore, let $m=\lfloor n/2\rfloor-s_q-1$, let $b_{i}=a_{n+1-i}-a_i$ for $i\in [m]$. Then $b_1>\dots>b_m$. Given $1\leq j_0<\dots<j_q\leq m$, the condition that $\{a_{j_0},\dots,a_{j_q},a_{n+1-j_q},\dots,a_{n+1-j_0}\}$ is not an edge of $H$ is equivalent to the inequality
		\begin{equation}\label{equ:independent}
			\sum_{i=0}^{q}(-1)^{i}t_ib_{j_i} \leq 0.
		\end{equation}
	Define the indices $1=k_0<k_1<\dots<k_p<m$ as follows. If $k_{j-1}$ is already defined, let $k_j>k_{j-1}+1$ be the smallest index such that either
		\begin{enumerate}
			\item[(a)] $b_{k_{j}-1}<\frac{s_{2j}}{2}b_{k_j}$ or
			\item[(b)] $2s_{2j-1}b_{k_j-1}<b_{k_{j-1}}$.
		\end{enumerate}  
	Then $k_{j}-k_{j-1} < 5\frac{\log s_{2j-1}}{\log s_{2j}}$, so $k_{j}< 1+ 5\sum_{i=1}^{j}\frac{\log s_{2i-1}}{\log s_{2i}}<m$, showing that $k_1,\dots,k_p$ are well defined. We prove by induction that for $\ell=0,\dots,p$, we have
    \begin{equation}\label{eq: telescope_2}
      b_{1}-\sum_{i=1}^{\ell}(t_{2i-1}b_{k_i-1}-t_{2i}b_{k_i}) >  \frac{t_{2\ell}}{2}b_{k_\ell}.
    \end{equation}
    This is trivially true for $\ell=0$, so let us assume that $\ell\geq 1$. Consider two cases.
		\begin{description}
			\item[Case 1.] $b_{k_{\ell}-1}<\frac{s_{2\ell}}{2}b_{k_{\ell}}$.
			
			By our induction hypothesis, we have $b_1-\sum_{i=1}^{\ell-1}(t_{2i-1}b_{k_i-1}-t_{2i}b_{k_i})>0$, so we can write
			$$b_1-\sum_{i=1}^{\ell}(t_{2i-1}b_{k_i-1}-t_{2i}b_{k_i})> -t_{2\ell-1}b_{k_\ell-1}+t_{2\ell}b_{k_\ell}>\frac{t_{2\ell}}{2}b_{k_{\ell}}.$$
			\item[Case 2.] $2s_{2\ell-1}b_{k_\ell-1}<b_{k_{\ell-1}}$.
			
			By our induction hypothesis, we have
				$$b_1-\sum_{i=1}^{\ell}(t_{2i-1}b_{k_i-1}-t_{2i}b_{k_i})> \frac{t_{2\ell-2}}{2}b_{k_{\ell-1}}-t_{2\ell-1}b_{k_\ell-1}+t_{2\ell}b_{k_\ell}>t_{2\ell}b_{k_{\ell}}>\frac{t_{2\ell}}{2}b_{k_{\ell}}.$$
		\end{description}
		If $q$ is even, (\ref{eq: telescope_2}) with $\ell=p$ shows that the set $\{b_1\}\cup\{b_{k_j-1},b_{k_j}:j=1,\dots,p\}$ violates (\ref{equ:independent}), a contradiction. 
		So suppose that $q$ is odd. Writing $e_0$ for the $(r-2)$-tuple we get by increasingly ordering the set $\{a_1,a_n\}\cup \{a_{k_i-1},a_{k_i},a_{n+1-k_i},a_{n+2-k_i}:i=1,\dots,p\}$, (\ref{eq: telescope_2}) with $\ell=p$ implies
		$$f_{q-1}(e_0)=b_1-\sum_{i=1}^{p}(t_{2i}b_{k_i-1}-t_{2i+1}b_{k_i})>\frac{t_{q-1}}{2}b_{k_{p}}=\frac{t_{q-1}}{2}(a_{n+1-k_{p}}-a_{k_p}).$$
		As $k_p<m\le\frac{n}{2}-s_q-1$, there are at least $2s_q+1$ integers between $k_p+1$ and $n-k_p$. 
		Hence, there is an index $b \in \{k_p+1,k_p+2,\dots,n-k_p-1\}$ such that $(a_{b+1}-a_b)\leq \frac{a_{n-k_p}-a_{k_p+1}}{2s_q} < \frac{a_{n+1-k_p}-a_{k_p}}{2s_q}$, thereby $t_q(a_{b+1}-a_b) < \frac{t_{q-1}}{2}(a_{n+1-k_{p}}-a_{k_p})$. Let $e$ be the $r$-tuple we get after inserting $a_b$ and $a_{b+1}$ into $e_0$ respecting the increasing order. Then
		$$f_q(e)=f_{q-1}(e_0)-t_q(a_{b+1}-a_b)>\frac{t_{q-1}}{2}(a_{n+1-k_{p}}-a_{k_p})-t_q(a_{b+1}-a_b)>0,$$
		contradicting that $e$ is not an edge of $H$.
	\end{proof}
	
	We are now ready to prove the main theorem of this section.
	
	\begin{proof}[Proof of Theorem \ref{thm:lin_constr}]
	    We assume that $N$ is sufficiently large with respect $r$. For $i=1,\dots,q-1$, set $s_{i}=2^{(\log N)^{(q-i)/q}}$, and set $s_q=10^{6}$. Then $N > s_1$ is satisfied assuming $N$ is large. Let $H=H(s_1,\dots,s_q)$ be the semi-linear hypergraph of complexity $(1,1)$ defined as above. Then by Lemma \ref{lemma:clique}, we have
	    $$\omega(H)\leq 10\left(s_q+2+\frac{\log N}{\log s_1}+\sum_{i=1}^{\lfloor (q-1)/2\rfloor }\frac{\log s_{2i}}{\log s_{2i+1}}\right)<10^{8}+10q(\log N)^{1/q},$$
	    and by Lemma \ref{lemma:independent}, we can write
	    $$\alpha(H)\leq  10\left(s_q+2+\sum_{i=1}^{\lfloor q/2\rfloor}\frac{\log s_{2i-1}}{\log s_{2i}}\right)< 10^{8}+10q(\log N)^{1/q}.$$
	    By writing $n=10^{8}+10q(\log N)^{1/q}$, this shows that $R^{1,1,1}_{r}(n)>N>2^{cn^{q}}$, where $c=c(r)>0$ is some suitable constant.
	\end{proof}
		
	\section{Concluding remarks}\label{sect:concluding remarks}
	
	\begin{itemize}
		
		\item	Theorem \ref{thm:construction} provides new lower bounds on the asymmetric semi-algebraic Ramsey number $R_{3}^{\mathbf{t}}(s,n)$. However, there is still a large gap between this lower bound and the upper bound by Suk \cite{Suk16}. It would be interesting to close this gap, and we believe our lower bound should be closer to the truth.
		
		\begin{conjecture}
			Let $s,d,D,m$  be positive integers, then there exists $c=c(s,d,D,m)>0$ such that
			$$R_3^{d,D,m}(s,n)<2^{(\log n)^c}$$
			holds for every sufficiently large $n$.
		\end{conjecture}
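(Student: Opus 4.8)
We do not know how to prove this conjecture, but let us sketch a possible line of attack. A positive answer would cut Suk's bound $R_3^{\mathbf t}(s,n)<2^{2^{(\log n)^{1/2+o(1)}}}$ \cite{Suk16} down to a single exponential of a polylogarithm, essentially matching the lower bound $2^{(\log n)^{4/3-o(1)}}$ furnished by Theorem \ref{thm:construction}. The guiding idea would be to imitate, in arbitrary dimension, the argument of \cite{CFPSS14} for $d=1$, which already gives $R_3^{1,D,m}(s,n)<2^{(\log n)^{O(1)}}$ by a divide-and-conquer that exploits the linear order of the points. The difficulty in higher dimension is that a triple has no canonical ``middle'' point, so the linear order must be replaced by a geometric partition, and the real work is to keep the resulting recursion shallow.

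Concretely, the main structural tool would be a same-type (cell) partition of $\mathbb R^d$ into $K$ pieces in the style of \cite{APPRS05,CFPSS14}: for the complexity-$\mathbf t$ hypergraph at hand one partitions $\mathbb R^d$ so that every triple with its three points lying in three distinct cells is either always an edge or always a non-edge. Contracting each cell to a super-vertex produces an auxiliary $3$-uniform hypergraph on $K$ vertices of some bounded complexity $\mathbf t'$, and one recurses simultaneously inside the cells and on this auxiliary hypergraph; one may also first, in the spirit of Lemma \ref{lemma:primitive}, reduce to Boolean combinations of a bounded number of simpler hypergraphs, though this is not essential to the approach. The quantitative heart of the scheme is to take $K$ to be a suitably small power of $N$ and to balance the two recursions so that only $(\log N)^{O(1)}$ rounds are needed: in Suk's approach it is a $\Theta(\sqrt{\log N})$-round recursion that is responsible for the extra exponential, so eliminating that second exponential is exactly what the conjecture demands.

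The step I expect to be the main obstacle --- and the reason the conjecture is still open --- is the treatment of the ``heterogeneous'' triples, those with two points in one cell and one in another. These encode an incidence-type bipartite relation between cells and points, and for $d\ge 2$ such relations can be genuinely Ramsey-expensive: this is the very phenomenon behind the construction of Suk and Tomon \cite{ST21} giving $R_2^{d,2,m}(3,n)=\Omega(n^{4/3})$. Hence no naive dimension reduction can succeed, and one must exploit that $s$ is fixed. What appears to be needed is a statement of the form: a semi-algebraic bipartite-type relation between $K$ sets and $N$ points omitting a clique-like configuration of size $s$ has a homogeneous sub-configuration of size $(\log(KN))^{\Omega(1)}$, \emph{with the implied exponent independent of the depth of the recursion}. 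Proving such a depth-uniform estimate --- morally, that the incidence obstruction cannot be iterated --- is the crux; a successful argument along these lines should also determine the exponent $c=c(\mathbf t,s)$, which by Theorem \ref{thm:construction} must be at least $4/3-o(1)$ for a suitable complexity already when $s=4$, and which we expect to be $\Theta_s(1)$, plausibly growing with $s$.
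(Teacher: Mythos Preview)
The statement is a \emph{conjecture} appearing in the paper's concluding remarks; the paper does not prove it and explicitly presents it as open. There is therefore no ``paper's own proof'' to compare against. Your proposal correctly acknowledges this at the outset (``We do not know how to prove this conjecture'') and offers only a heuristic sketch of a possible line of attack, not a proof. In that sense your treatment is appropriate: you are not claiming to have proved something the paper left open.

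As a discussion of obstacles, the sketch is sensible. You correctly identify that the $d=1$ argument of \cite{CFPSS14} already gives the desired quasipolynomial bound, that Suk's extra exponential comes from a recursion of depth roughly $\sqrt{\log N}$, and that the heterogeneous triples (two points in one cell, one in another) are the bottleneck in any same-type-partition approach. The connection you draw to the incidence construction of \cite{ST21} is apt: that is indeed the phenomenon underlying Theorem~\ref{thm:construction}, and it shows that the bipartite relations arising from heterogeneous triples can be genuinely costly. One minor point of phrasing: since $c$ in the conjecture is a constant independent of $n$, the lower bound from Theorem~\ref{thm:construction} should be read as forcing $c\ge 4/3$ rather than ``$c\ge 4/3-o(1)$''. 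But to be clear, your sketch does not constitute a proof, and the depth-uniform incidence estimate you isolate as the crux is precisely the missing ingredient; absent a concrete plan for establishing it, the proposal remains a program rather than an argument.
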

		
		\item		In the case of graphs, it was observed by Tomon \cite{Tlin} that the semi-linear Ramsey number satisfies $R_2^{d,1,m}(n)>n^{c}$, assuming $d$ and $m$ are sufficiently large with respect to $c$. In particular, one can define graphs achieving such bounds using \emph{zero-patterns} of linear functions instead of sign-patterns, i.e. whether two vertices are connected depends only on which linear functions evaluate to 0. However, for higher uniformity, it was proved by Sudakov and Tomon~\cite{SudTom21} that $r$-uniform hypergraphs defined by zero-patterns of polynomials (over any field $\mathbb{F}$) contain polynomial sized cliques or independent sets. 
		
		\item Theorem \ref{thm:lin_constr} shows that $R^{1,1,1}_r(n)\geq 2^{\Omega(n^{\lfloor r/2\rfloor-1})}$. On the other hand, a slight refinement of our proof gives the upper bound $R^{1,1,1}_r(n)\leq 2^{O(n^{r})}$. Indeed, let $H$ be an $r$-uniform semi-linear hypergraph of complexity $(1,1)$ on $N$ vertices. Then the edges of $H$ (or its complement) are either defined by the zeros of a single linear function, or they are defined by a single linear inequality. In the former case, $H$ contains polynomial sized cliques or independent sets by the aforementioned result of Sudakov and Tomon \cite{SudTom21}. However, in the latter case, $H$ is also primitive semi-linear, and its witness $P\in\mathbb{R}^{r\times N}$ has the property that any two of its rows are constant multiples of each other. Hence, $P$ contains a cupcap submatrix of size $r\times \Omega_r(\log N)$. Repeating the rest of our proof implies the desired bound  $R^{1,1,1}_r(n)\leq 2^{O(n^{r})}$.
		
		\item There is another approach of defining semi-algebraic Ramsey numbers, following \cite{BM12}. So far, we viewed the complexity $\mathbf{t}=(d,D,m)$ as fixed, while the defining polynomials may depend on $n$. 
		On the other hand, one can also fix the description of the semi-algebraic hypergraph, that is, fix a collection of $m$ polynomials $f_1\dots,f_m$, and a function $\Phi:\{+,-,0\}^m\mapsto \{\mbox{True},\mbox{False}\}$. We call $\Lambda=(f_1,\dots,f_m; \Phi)$ a \emph{description}. 
		Define the Ramsey number $R_r^{\Lambda}(s,n)$ to be the smallest $N$ such that any $r$-uniform semi-algebraic hypergraph on $N$ vertices with description $\Lambda$ contains a clique of size $s$ or an independent set of size $n$, and set $R_r^{\Lambda}(n)=R_r^{\Lambda}(n,n)$. It is not difficult to show the following.
		
		\begin{theorem}
			Let $\Lambda$ be a description of complexity $(1,1,1)$. Then there exists a constant $c=c(\Lambda)>0$ such that 
			$$R^{\Lambda}_r(n)\leq 2^{cn}.$$
		\end{theorem}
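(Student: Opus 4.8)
The plan is to prove the equivalent statement that every $r$-uniform semi-algebraic hypergraph $H$ with description $\Lambda=(f;\Phi)$ of complexity $(1,1,1)$ on $N$ vertices satisfies $\max\{\omega(H),\alpha(H)\}\geq c'\log N$ for some constant $c'=c'(\Lambda)>0$. Write $f(x_1,\dots,x_r)=b+\sum_{j=1}^{r}a_jx_j$ with $a_1,\dots,a_r,b\in\mathbb{R}$ fixed. If $\Phi$ is constant, $H$ is complete or empty and we are done; if $\Phi$ depends only on whether $f$ vanishes (i.e.\ $\Phi^{-1}(\True)\in\{\{0\},\{+,-\}\}$), then $H$ or its complement is defined by a zero-pattern of a single polynomial, so by the theorem of Sudakov and Tomon \cite{SudTom21} it contains a clique or independent set of polynomial size, far more than required. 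Hence we may assume $\Phi$ is a \emph{threshold}: with $p_1,\dots,p_N$ the points and $q_1<\dots<q_r$ referring to the vertex enumeration, $\{q_1,\dots,q_r\}$ is an edge iff $f(p_{q_1},\dots,p_{q_r})\bowtie 0$ for a fixed relation $\bowtie\in\{<,\leq,>,\geq\}$. We may also assume that not all $a_j$ vanish (else $f$ is constant), and that the points are pairwise distinct (otherwise a set of vertices sharing a common point is, after relabelling, a clique or independent set of size $N/O(\log N)$, which suffices).

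First I would pass to a very structured configuration. By Lemma~\ref{lemma:convex} with $q=1$, applied to the point sequence in vertex order, we may restrict to a sub-hypergraph on $N_1=\Omega(\log N)$ vertices whose points form a monotone cup or cap; after possibly reversing the vertex order and correspondingly the argument order of $f$ (which preserves the complexity $(1,1,1)$), we may assume the points increase along the vertex order. Now fix a constant $\Delta=\Delta(\Lambda)$ with $\Delta>2$ and $\Delta>16\max_j|a_j|/\min\{|a_j|:a_j\neq0\}$, and apply Lemma~\ref{lemma:exponential}: losing only a constant factor, we obtain a sub-hypergraph on $M=\Omega(\log N)$ vertices and a constant $s\in\mathbb{R}$ such that, writing $y_q:=p_q-s$ for the (relabelled) points, the sequence $(y_q)_{q\in[M]}$ is $\Delta$-exponential. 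In particular all $y_q$ have a common sign, $|y_q|$ is strictly monotone in $q$, and consecutive ratios of $|y_q|$ are at least $\Delta$.

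The crux is that on such a configuration $f$ is controlled by a single coordinate. Let $J=\{j:a_j\neq0\}\neq\varnothing$ and set $j_0=\max J$ if $|y_q|$ is increasing and $j_0=\min J$ otherwise, so that $|y_{q_j}|\leq\Delta^{-|j-j_0|}|y_{q_{j_0}}|$ for all $j\in J$ and all $q_1<\dots<q_r$. Consequently
\[
f(p_{q_1},\dots,p_{q_r})=\theta+a_{j_0}y_{q_{j_0}}+E,\qquad \theta:=b+s\sum_{j\in J}a_j,\qquad |E|\leq\frac{2\max_j|a_j|}{\Delta}\,|y_{q_{j_0}}|,
\]
and $|E|$ is negligible against $|a_{j_0}y_{q_{j_0}}|$ by the choice of $\Delta$. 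If $\theta=0$, then $f$ has the constant sign $\sign(a_{j_0})\,\sign(y_{q_{j_0}})$ on every $r$-subset, so the whole sub-hypergraph is complete or empty and we are done. If $\theta\neq0$, split $[M]$ into the indices $q$ with $|a_{j_0}y_q|>8|\theta|$, those with $|a_{j_0}y_q|<|\theta|/8$, and the rest. On $r$-subsets of the first part the term $a_{j_0}y_{q_{j_0}}$ dominates $\theta+E$, so $f$ has constant sign $\sign(a_{j_0})\,\sign(y_{q_{j_0}})$; on $r$-subsets of the second part the term $\theta$ dominates $a_{j_0}y_{q_{j_0}}+E$, so $f$ has constant sign $\sign(\theta)$. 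Because $|y_q|$ is monotone with consecutive ratios $\geq\Delta\geq3$, the remaining part has at most a bounded number of indices, while the first two parts are an initial and a final segment of $[M]$; hence one of them, say $X$, satisfies $|X|\geq(M-O(1))/2=\Omega(\log N)$, and $H[X]$ is complete or empty. In either case $H$ contains a clique or an independent set of size $\Omega_\Lambda(\log N)$, giving $R_r^{\Lambda}(n)\leq 2^{cn}$.

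The one subtlety, and what I expect to be the main obstacle, is that the effective threshold $\theta$ involves the constant $s$ returned by Lemma~\ref{lemma:exponential} and hence depends on the actual coordinates of the point set, not just on $\Lambda$; a priori the ``ambiguous'' window $|\theta|/8\leq|a_{j_0}y_q|\leq8|\theta|$ could contain all our vertices. This is exactly where exponential growth rescues the argument: that window has bounded multiplicative width, so by the ratio-$\Delta$ growth it meets only a constant number of levels (the constant depending on $\Delta$ and on the ratios $|a_j|/|a_{j'}|$, hence only on $\Lambda$), and is therefore harmless. The remaining details — the case distinctions over the four relations $\bowtie$ and the exponential types arising in Lemma~\ref{lemma:exponential}, tracking the shift $s$, and verifying that Lemmas~\ref{lemma:convex} and \ref{lemma:exponential} cost only constant factors inside the logarithm — are routine.
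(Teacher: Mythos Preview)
Your proposal is correct and follows essentially the same approach as the paper: apply Lemma~\ref{lemma:convex} with $q=1$ to pass to a cup or cap of size $\Omega(\log N)$, then Lemma~\ref{lemma:exponential} to obtain a $\Delta$-exponential shifted sequence (with $\Delta$ depending only on the ratios of the coefficients of $f$), and finally observe that on such a sequence the sign of $f$ is governed by the dominant term, yielding a homogeneous set of size $\Omega(\log N)$. The paper compresses the last step into the phrase ``it is easy to show'', whereas you spell out the threshold-splitting argument around $\theta=b+s\sum_j a_j$ and correctly note that the ambiguous window has bounded multiplicative width and hence contains only $O_\Lambda(1)$ indices; this is exactly the content the paper leaves implicit.
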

		
		Indeed, given an $r$-uniform hypergraph $H$ on $N$ vertices with description $\Lambda=(f;\Phi)$, the vertex set of $H$ corresponds to a sequence of real numbers $x_1,\dots,x_N$. Choose $\Delta=2r\max_{a,b}|\frac{a}{b}|$, where the maximum ranges over all non-zero coefficients of the linear function~$f$. Then by Lemma \ref{lemma:convex} and Lemma \ref{lemma:exponential}, there exists $N_0=\Omega_{\Delta}(\log N)$ and a subsequence $\mathbf{y}=(y_1,\dots,y_{N_0})$ of $x_1,\dots,x_{N}$ such that some shift of $\mathbf{y}$ is $\Delta$-exponential. It is easy to show that the subhypergraph of $H$ induced on the set of vertices corresponding to $\mathbf{y}$ contains either a clique or an independent set of size $\Omega(N_0)=\Omega_{\Delta}(\log N)$.
		
		On the other hand, there exists a description $\Lambda$ of complexity $(r,1,1)$ such that $R^{\Lambda}_r(n)\geq 2^{\Omega(n^{\lfloor r/2\rfloor-1})}.$ 
		Indeed, take $\Lambda$ to be the description of the primitive $r$-uniform semi-linear hypergraph, and note that the construction in Theorem \ref{thm:lin_constr} has primitive complexity 1.
		
		\item	The result of Bukh and Matou\v sek \cite{BM12} shows that if $d=1$, then $R_{r}^{d,D,m}(n)<2^{2^{O(n)}}$, while we proved that if $D=1$, then $R_r^{d,D,m}(n)<2^{n^{O(1)}}$. It seems already highly challenging to decide whether in case $d=2$ or $D=2$, the height of the tower in the general upper bound $R_r^{d,D,m}(n)<\tower_{r-1}(n^{O(1)})$ can be reduced. Nevertheless, we propose the following conjecture.
		
		\begin{conjecture}
			There exists a function $k:\mathbb{N}\rightarrow\mathbb{N}$ such that the following holds. Let $r,d,D,m$ be integers, and let $s=\min\{d,D\}$. Then there exists $c=c(r,d,D,m)$ such that
			$$R_r^{d,D,m}(n)< \mbox{tw}_{k(s)}(n^c).$$
		\end{conjecture}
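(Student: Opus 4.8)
The conjecture is open, so we only sketch a plausible line of attack. It suffices to treat two extreme regimes separately and take $k(s)$ to be the larger of the two resulting heights: the case $s=d$ (bounded dimension, arbitrary degree) and the case $s=D$ (bounded degree, arbitrary dimension). The unifying viewpoint is that the tower of height $r-1$ in the bound $R_r^{d,D,m}(n)<\tower_{r-1}(n^{O(1)})$ of \cite{CFPSS14} arises from $r-1$ successive ``reduce the uniformity by one'' steps, each losing one exponential because a polynomial cell decomposition of $N$ degree-$D$ hypersurfaces in $\mathbb{R}^d$ has $N^{O(d)}$ cells. The aim is to reorganise this recursion so that the number of expensive steps is controlled by $s$ alone --- exactly as Theorem~\ref{thm:linear} achieves a single expensive step ($N\mapsto\log N$) when $D=1$, uniformly in both $r$ and $d$.

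\textbf{The regime $s=D$.} Theorem~\ref{thm:linear} is the case $D=1$, with one exponential loss. For fixed $D\ge 2$ one would first run a degree-$D$ analogue of the streamlining of Section~\ref{sec: finding convex matrices}: iterate Erd\H{o}s--Szekeres/Ramsey-type selection not only on each row of a witness matrix but on its discrete derivatives of orders up to $D$, forcing every relevant quantity into a monotone, eventually super-exponential state, so that the sign of $f(p_{i_1},\dots,p_{i_r})$ is governed by a single dominating monomial as in the proof of Theorem~\ref{thm:linear}. The genuinely new difficulty is that a degree-$D$ polynomial in $rd$ variables is not additive over the $r$ points --- it has cross terms coupling $p_{i_a}$ with $p_{i_b}$ --- so the clean domination-hypergraph analysis of Lemma~\ref{lemma:domination} no longer applies directly. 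One would peel these cross terms off one degree-level at a time, each peel costing one exponential, which is what should make the height depend on $D$ rather than on $r$.

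\textbf{The regime $s=d$.} Here the precedent is the $\tower_3(O(n))$ bound of \cite{BM12} for $d=1$. The plan is an induction on $d$: given an $r$-uniform semi-algebraic hypergraph of complexity $(d,D,m)$ on $N$ points of $\mathbb{R}^d$, use a cutting --- or a generic projection to $\mathbb{R}^{d-1}$ followed by a vertical decomposition and a same-type selection --- to pass, at the cost of a single exponential, to a subset of $N'=\Omega(\log N)$ points lying on a bounded number of $(d-1)$-dimensional pieces, on which the induced hypergraph has complexity $(d-1,D',m')$ with $D',m'$ bounded in terms of $d,D,m$. After $d$ iterations the dimension is $0$, where the hypergraph is trivial, giving total height $O(d)$.

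\textbf{Main obstacle.} In both regimes the crux is decoupling the uniformity $r$ from the number of expensive recursion levels: for $s=D$ one must control the cross terms of a degree-$D$ polynomial without spending a tower level per vertex, and for $s=d$ one must perform a single dimension reduction losing only one exponential \emph{regardless of $r$} --- the naive cutting of \cite{CFPSS14} reintroduces the $r$-dependence, since homogenising the remaining $r-1$ coordinates costs a further tower level. We expect the $s=d$ regime to be the harder one, since already its $d=1$ instance required the delicate argument of \cite{BM12}.
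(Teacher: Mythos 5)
This statement is an open conjecture: the paper gives no proof of it, and the authors explicitly remark that already establishing the existence of $k(2)$ (the case $\min\{d,D\}=2$) ``looks rather difficult.'' Your proposal is, by your own admission, a strategy sketch rather than a proof, and the two steps on which it hinges are precisely the open content of the conjecture, so there is a genuine gap --- indeed the entire argument is missing. Concretely: in the $s=D$ regime, the claim that one can ``peel off cross terms one degree-level at a time, each peel costing one exponential'' is asserted with no mechanism. The domination-hypergraph analysis of Lemma~\ref{lemma:domination} depends essentially on the defining function being additive across the $r$ points, i.e.\ of the form $\sum_i P(i,q_i)$; a single bilinear term $\langle p_{i_a},p_{i_b}\rangle$ couples pairs of vertices and destroys this separability, and nothing in the sketch explains how a cupcap/exponential-streamlining step would restore it, let alone why the number of such repairs would be bounded by $D$ rather than by $r$. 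Note also that by the Veronese remark in the paper, every hypergraph of complexity $(d,D,m)$ has complexity $(d',r,m)$, so any scheme whose cost is ``one exponential per degree level up to $D$'' must explain why it does not simply reproduce the height-$(r-1)$ tower once $D$ is comparable to $r$.

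In the $s=d$ regime the situation is the same: the assertion that one projection/cutting step reduces the dimension by one at the cost of a single exponential \emph{uniformly in $r$} is exactly what is not known, and your own closing paragraph concedes that the standard cutting argument of \cite{CFPSS14} reintroduces the $r$-dependence. The $d=1$ base case from \cite{BM12} already costs two exponentials and required a delicate bespoke argument; no reduction from dimension $d$ to dimension $d-1$ with bounded loss is exhibited or even plausibly outlined. As it stands, the proposal identifies the right obstacles but proves nothing; it cannot be accepted as a proof of the conjecture.
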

		
		Note that the previous results imply that one can take $k(1)=3$. However, as remarked earlier, proving the existence of $k(2)$ looks rather difficult already.

		\item Finally, we recall the problem about finding cupcap submatrices of large size, that is, determining the optimal version of Lemma \ref{lemma:convex}. 
		
		\begin{problem}
			Given positive integer $q$, determine the order of magnitude of the function~$f_q(N)$, denoting the largest $N'$ such that any matrix $M\in \mathbb{R}^{q\times N}$ with no repeated elements contains a $q\times N'$-sized cupcap submatrix.
		\end{problem}
		
	\end{itemize}
	
	\vspace{0.3cm}
	\noindent	
	{\bf Acknowledgements.}
	We would like to thank the anonymous referees for their useful comments and suggestions.

Zhihan Jin is supported by the SNSF grant 200021\_196965. Part of this research was done while Istv\'an Tomon was employed at ETH Zurich, where he was supported by the SNSF grant 200021\_196965.

\end{document}